\documentclass[12pt]{article}
\usepackage[T1]{fontenc}
\usepackage[dvips]{graphicx}
\graphicspath{{images/}}
\setlength{\textwidth}{6in} \setlength{\textheight}{8.5in}
\setlength{\topmargin}{-0.25in} \setlength{\evensidemargin}{.25in}
\setlength{\oddsidemargin}{.25in}
\setlength{\parskip}{.65ex}

\usepackage{verbatim}
\usepackage{amsmath,amsthm}
\usepackage{xspace}
\usepackage{pifont}
\usepackage{graphicx}
\usepackage{amssymb}
\usepackage{epic, eepic}
\usepackage{dsfont}
\usepackage{amssymb}
\usepackage{makeidx}
\usepackage{mathrsfs}
\usepackage{exscale}
\usepackage{color} 
\usepackage{overpic} 
\usepackage{bm}
\usepackage{bbm}
\usepackage{booktabs} 
\usepackage{color, colortbl}
\usepackage{subcaption}
\usepackage[numbers]{natbib}

\RequirePackage[colorlinks,citecolor=blue,urlcolor=blue]{hyperref}

\definecolor{Gray}{gray}{0.9}

\usepackage{amsmath,afterpage}
\usepackage{epsf}
\usepackage{graphics,color}
\RequirePackage[colorlinks,citecolor=blue,urlcolor=blue]{hyperref}
\usepackage[export]{adjustbox}

\usepackage{tcolorbox}

\def\0{\mathbf{0}}

\def\eps{\varepsilon}

\def\lam{\lambda}
\def\rr{\rightarrow}

\def \< {\langle}
\def \> {\rangle}

\def\ol{\overline}

\def\beqa{\begin{eqnarray}}
\def\eeqa{\end{eqnarray}}
\def\beqas{\begin{eqnarray*}}
\def\eeqas{\end{eqnarray*}}

\newtheorem{theorem}{Theorem}[section]
\newtheorem{lemma}[theorem]{Lemma}

\newtheorem{proposition}[theorem]{Proposition}

\newtheorem{corollary}[theorem]{Corollary}

\newtheorem{remark}[theorem]{Remark}
\newtheorem{example}[theorem]{Example}

\newtheorem{definition}[theorem]{Definition}
\newtheorem{assumption}[theorem]{Assumption}
\numberwithin{equation}{section}
\newcommand{\hatd}[1]{{}}




\setcounter{section}{0}

\newcommand{\bd}{\begin{displaymath}}
\newcommand{\ed}{\end{displaymath}}
\newcommand{\be}{\begin{equation}}
\newcommand{\ee}{\end{equation}}
\newcommand{\bq}{\begin{eqnarray}}
\newcommand{\eq}{\end{eqnarray}}
\newcommand{\bn}{\begin{eqnarray*}}
\newcommand{\en}{\end{eqnarray*}}

\newcommand{\re}{\mathds{R}}

\def\wt{\widetilde}

\def\P{\mathbb{P}}
\def\E{{\mathbb{E}}}
\newcommand{\R}{{\mathbb R}}
\newcommand{\id}{{\rm id}}

\usepackage{todonotes}

\usepackage[normalem]{ulem}

\usepackage{mathtools}
\mathtoolsset{showonlyrefs}

\usepackage{authblk}

\title{Equilibrium in Functional Stochastic Games with Mean-Field Interaction}
\author[1]{Eduardo Abi Jaber\thanks{The first author is grateful for the financial support from the Chaires FiME-FDD, Financial Risks, Deep Finance \& Statistics and Machine Learning and systematic methods in finance at Ecole Polytechnique.}}
\author[2]{Eyal Neuman}
\author[3]{Moritz Vo{\ss}}
\affil[1]{Ecole Polytechnique, CMAP}
\affil[2]{Department of Mathematics, Imperial College London}
\affil[3]{Department of Mathematics, University of California Los Angeles}

 
\begin{document}

 \vspace{-0.5cm}
\maketitle

\begin{abstract}
We consider a general class of finite-player stochastic games with mean-field interaction, in which the linear-quadratic cost functional includes linear operators acting on controls in $L^2$. We propose a novel approach for deriving the Nash equilibrium of the game {semi-}explicitly in terms of operator resolvents, by reducing the associated first order conditions to a system of stochastic Fredholm equations of the second kind and deriving their solution {in semi-explicit form}. 
Furthermore, by proving stability results for the system of stochastic Fredholm equations, we derive the convergence of the equilibrium of the $N$-player game to the corresponding mean-field equilibrium. As a by-product, we also derive an $\eps$-Nash equilibrium for the mean-field game, which is valuable in this setting as we show that the conditions for existence of an equilibrium in the mean-field limit are less restrictive than  in the finite-player game. Finally, we apply our general framework to solve various examples, such as stochastic Volterra linear-quadratic games, models of systemic risk and advertising with delay, and optimal liquidation games with transient price impact. 
\end{abstract} 

\begin{description}
{\small \item[Mathematics Subject Classification (2010):] 49N80, 49N90, 93E20, 91G80 
\item[JEL Classification:] C02, C61, C73, G11, G32

\item[Keywords:] mean-field games, Nash equilibrium, Volterra stochastic control, optimal portfolio liquidation, systemic risk, price impact}
\end{description}

\bigskip

\newpage 

\section{Introduction} 

Large population stochastic games and their mean-field limits have attracted a considerable attention in the past decades since the pioneering work of \citet{LasryLions} and \citet*{huang2006large}, which was further developed by {\citet{Car-del-book,Car-del-book2}, \citet{Cardaliaguet-book,Fischer:2017wc}, among others.} The tractability and convergence properties of this versatile class of {models} has applications in various research areas such as mathematical finance, economics, population modeling, and marketing (see, e.g., Chapter 1 of \cite{Car-del-book}). 
One of the main challenges in the area of finite population stochastic games is to derive explicitly  the Nash equilibrium of the system. Motivating examples from mathematical finance include price impact games with competition for liquidity between agents~\citep*{voss.19, DrapeauLuoSchiedXiong:19,EvangelistaThamsten:20,neuman2021trading,MK-M-N-2022}, systemic risk games introduced by \citet{carmona2013mean,carmona2018systemic,fouque2018mean}, as well as optimal investment problems studied in~\citet{Lacker-Zariphopoulou, Lacker:2020aa}. In various extensions of the aforementioned models it turns out that the state variables of the players and their objective functionals naturally depend on the entire trajectory of the controls. These generalizations give rise to infinite-dimensional dynamic stochastic games. In such a setting, deriving a Nash equilibrium of the system is in general considered to be intractable, and solutions to such games only appear in very particular examples; see Section~\ref{sec:examples} for a survey of such problems.

In this work we develop a novel approach for solving a general class of infinite-dimensional stochastic games with mean-field interaction between the players. Specifically, we consider $N$-player stochastic games in which each agent $i$ has an objective functional of the form
\be\label{eq:J^iintro2}
\begin{aligned}
    J^{i}(u^i) := & \, \E\left[ -\langle \bar u,\boldsymbol A_1  \bar u \rangle_{L^2} - \langle u^i,\boldsymbol A_2 u^i \rangle_{L^2} - \langle u^i, (\boldsymbol{A}_3 + \boldsymbol{A}^*_4)  \bar u \rangle_{L^2} \right. \\ 
    & \hspace{14pt} \left. + \langle b^i, u^i \rangle_{L^2} +\langle b^0, \bar u\rangle_{L^2} + c^i \right],
\end{aligned}
\ee 
where $u^i$ represents the agent's control and $\bar u = N^{-1}\sum_{i=1}^N u^{i}$ captures a mean-field interaction between all agents. Here, the symbols $\boldsymbol A_i$, $i=1, 2, 3, 4$ denote non-anticipative linear operators on $L^2([0,T], \R)$; $(b^i_s)_{s\in [0,T]}, (b^0_s)_{s\in [0,T]}$ are progressively measurable stochastic processes {which encode individual and common signals}; and $c^i$ is a random variable. The inner product is defined in the usual sense as $\langle f,g\rangle_{L^2}:= \int_0^T f(s) g(s) ds$ for $f,g \in L^2([0,T], \R)$. 

The main goal of this work is to derive the Nash equilibrium {in semi-explicit form} of finite-player games with reward functionals~\eqref{eq:J^iintro2}. In order to achieve this, we {propose} a new technique to solve simultaneously for all agents $i \in \{1,\ldots,N\}$ their individual path-dependent stochastic control problem
\begin{equation} \label{intro:opt}
{\underset{u^i}{\text{maximize}} \; J^{i}(u^{i})}
\end{equation}
over progressively measurable and square-integrable $u^i$'s. Moreover, we also apply our method to solve the corresponding simpler {limit} mean-field game when the number of agents $N$ tends to infinity and provide a convergence result of the finite-player equilibrium strategies toward their mean-field limits. 

Our approach uses variational calculus in order to establish sufficient first order conditions for a Nash equilibrium which take the form of stochastic Fredholm equations with both forward and backward components. The derivation of the equilibrium then relies on solving this system of equations for which we develop a new method of solution. The approach is versatile enough to allow for non-Markovian and non-semimartingale settings with no additional effort, in the sense that the controls are allowed to depend on the entire trajectories of the inputs $(b_i)_{i=0}^N$ which are only assumed to be progressively measurable; that is, neither necessarily semimartingales nor independent. Our framework unifies and extends {several} examples of stochastic dynamical games that have appeared in the literature beyond the Markovian and semimartingale case and were often considered as intractable (see Section \ref{sec:examples}). In the following, we give a concrete summary of the contribution of the methods developed in this paper. 

\textbf{Our contributions.} In Theorems~\ref{thm:opt_ubar} and~\ref{thm:main-finite}, we show that in equilibrium the average control $\bar u$ is characterized via the stochastic Fredholm equation
\begin{align}\label{eq:introVolterra}
    \bar u_t = f_t - \int_0^t K(t,r) \bar u_r dr -\int_t^T L(r,t) \mathbb E_t \bar u_r dr, \quad t\in [0,T],
\end{align}  
where $f$ is progressively measurable and $K,L$ are deterministic kernels which are determined by the inputs of the model $(\boldsymbol A_1, \boldsymbol A_2, \boldsymbol A_3, \boldsymbol A_4, b^i,b^0,c^i)$.
One of our main {contributions} is that we derive a {semi-explicit} solution to \eqref{eq:introVolterra} using a {new} approach (see Proposition~\ref{L:FredholmConditional}). Then, we utilize the solution to \eqref{eq:introVolterra} to disentangle and solve the optimisation problem of each player and hence to derive the Nash equilibrium. In addition, we derive a stability result for \eqref{eq:introVolterra} (see Proposition~\ref{L:Fredholm_conv}). This stability result is the crucial ingredient for deriving  the convergence of the finite-player game equilibrium toward its mean-field game equilibrium limit in Theorem~\ref{thm:convergence}.

The framework developed in this paper is {distinct} in that the solvability, stability, and consistency of finite-player games with mean-field interaction boil down to the study of the stochastic Fredholm equation \eqref{eq:introVolterra}. To the best of our knowledge, our approach gives the first canonical method for deriving {semi-explicitly} the unique Nash equilibrium to this general class of stochastic games.  The closest result to Theorems \ref{thm:opt_ubar} and \ref{thm:main-finite} appeared in \citet{huang2015mean}, where the authors derived a first order condition for a special case of \eqref{eq:J^iintro2} in terms of a system of stochastic integral equations, but they did not obtain a solution to the system. In the context of linear–quadratic optimal control problems for stochastic Volterra equations, \citet{wang2018linear}  characterized and solved the problem in terms of stochastic Fredholm equations, and established the existence of solutions to the Fredholm equation via fixed-point methods (see  \citet[Theorem 4.3]{wang2018linear}). However, no explicit solutions were obtained in that work. More recently, similar equations appeared in \citet{hamaguchi2023maximum} and were solved for the specific case of a fractional kernel and a Gaussian signal. We also refer to \citet{bensoussan2017linear} where first order conditions for linear-quadratic stochastic games with delays in the state and the control were derived. In Section \ref{S:delaystate} we show that we can compute semi-explicit solutions to this class of games as a corollary of our main results.  

In Section~\ref{sec:examples}, we highlight the versatility of the objective functional in \eqref{eq:J^iintro2} by showing that it nests {various} examples of dynamical stochastic games beyond the Markovian and semimartingale case. In particular, we introduce in Section~\ref{S:Volterragame} a general class of Linear-Quadratic Stochastic Volterra games whose objective functional is shown to be equivalent to \eqref{eq:J^iintro2}. Then, we show how this framework accommodates, extends and {semi-explicitly} solves three {popular} examples that appeared in the literature: (i) inter-bank lending and borrowing models with delay in the control which were studied in \citet{carmona2018systemic,fouque2018mean} (see Section~\ref{S:exsystematic}); (ii) advertising models \`a la~\citet{nerlove1962optimal, gozzi245stochastic} with mean-field effect and delay in the state (Section~\ref{S:delaystate}); (iii) multi-player price impact games with a general propagator in the spirit of~\citet{abi2022optimal, neuman2021trading} (Section~\ref{S:marketimpact}).

Convergence results of the finite-player Nash equilibrium to the corresponding mean-field equilibrium in Markovian settings have attracted considerable attention recently (see \citet{Cardaliaguet-book} for the main reference on this topic). In particular \citet{Laur20} proved such convergence results for open-loop equilibria of games with idiosyncratic noise for each of the players.~\citet{neuman2021trading} studied the corresponding problem for portfolio liquidation games with common noise. \citet{lacker20} and~\citet{Djete21} proved the convergence of closed-loop solutions (under the a priori assumption that they exist) to the mean-field solution in the case where each player is influenced by idiosyncratic and common noise. In these papers (except for~\cite{neuman2021trading}) the presence of idiosyncratic noise is crucial to establish the convergence. In our specific setup, of semi-explicitly solvable linear-quadratic mean-field games, we provide in Theorems~\ref{thm:meanfieldgame} and~\ref{thm:meanfieldgame_inf} the equilibrium of the infinite-player mean-field game in our non-Markovian setting and show that the conditions to obtain the latter are less restrictive than in the corresponding finite $N$-player game in Theorems~\ref{thm:opt_ubar} and \ref{thm:main-finite}. Moreover, in Theorem~\ref{thm:convergence} we derive the aforementioned convergence which does not require the presence of idiosyncratic noise. Indeed, we show that convergence is obtained as a corollary of a stability result for the associated stochastic Fredholm equation (see Proposition~\ref{L:Fredholm_conv}). For the sake of completeness, we also provide in Theorem~\ref{thm-eps-nash} the associated $\eps$-Nash equilibrium result for our class of infinite-dimensional games. In other words, we show that the mean-field equilibrium strategy gives close to optimal rewards for each player in the $N$-player game when $N$ is sufficiently large. 

Finally, let us illustrate in the following toy example that our choice of objective functionals of type~\eqref{eq:J^iintro2} also includes games in which the cost functional explicitly contains state variables. We refer to Section~\ref{sec:examples} for the generalization of this equivalence to functional games as in \eqref{eq:J^iintro2}.

\textbf{Motivating example.} Let us consider a simple stochastic differential Markovian $N$-player game with controlled state variables given by
\begin{align} \label{toy-xi}
dX^i_t = \left( a X^i_t + u^i_t \right)dt + dW^i_t, \quad X^i_0 = 0, \quad i=1,\ldots, N,
\end{align} 
where $u^i$ is the control of the $i$-th player, $a \in \R$  and  $(W^1, \ldots, W^N)$ is an $N$-dimensional Brownian motion. Each player $i \in \{1,\ldots,N\}$ wishes to maximize following objective functional with mean-field interaction
\begin{equation} \label{toy-j} 
    J^i(u^i) = \mathbb E\left[ \int_0^T \left(- (u^i_t)^2 + {u^i_t} \left( \frac{1}{N} \sum_{j=1}^N X^j_t \right) \right) dt  + (X^i_T)^2 \right].
\end{equation}
We will show how this game can be transformed into a game of type~\eqref{eq:J^iintro2}. Indeed, note that we can rewrite $(X^i_T)^2$ by first using the variation of constants formula to solve for $X^i$ in~\eqref{toy-xi}, i.e., 
\begin{align*}
    X^i_t = \int_0^t e^{ a(t-s)} u_s^i ds + \int_0^t e^{a(t-s)}  dW_s^i,
\end{align*}
and then apply Fubini’s theorem to obtain
\begin{align}
    (X^i_T)^2 &= 2\int_0^T u^i_s \left(\int_0^s  e^{a(2T-s-r)}  u^i_r dr \right)
  ds  \\
  &\quad  \quad + 2 \int_0^T  \left( \int_0^T e^{a(2T-s-r)}  dW^i_r \right) u^i_s ds + \left( \int_0^T e^{a(T-s)} dW_s^i \right)^2.
\end{align} 
This shows that the objective functional \eqref{toy-j} can be written in the form
  \begin{align}\label{eq:Jintro}
      J^i(u^i) = \mathbb \E\left[  - \langle u^i,\boldsymbol A_2 u^i \rangle_{L^2} +{\langle u^i, \boldsymbol A_3 \bar u\rangle_{L^2}} + \langle b^i, u^i \rangle_{L^2}  + c^i \right],
  \end{align}
where
\begin{align*}
   (\boldsymbol A_2 u)(s) = & \, u_s - {2}\int_0^s e^{a(2T-s-r)} u_r dr, \quad  s\leq T, \\
   (\boldsymbol A_3 u)(s) = & \, \int_0^s e^{a(s-r)} u_r dr, \quad  s\leq T,
\end{align*}
and
\begin{align*}
    b^i_s = 2 \int_0^s e^{a(2T-s-r)} dW^i_r + \frac{1}{N} \sum_{j=1}^N \int_0^s e^{a(s-r)} dW^j_r, \quad c^i = \left( \int_0^T e^{a(T-s)} dW_s^i \right)^2,
\end{align*}
with $\bar u = \frac{1}{N} \sum_{j=1}^N u^j$. Hence, we observe that the objective functional in~\eqref{eq:Jintro} corresponds to a stochastic differential game without state variables. Moreover, this representation very naturally lends itself to directly employing first order variational calculus. This motivates us to study more general objective criteria in the spirit of~\eqref{eq:Jintro}. 

\textbf{Structure of the paper.} In Section~\ref{sec:FinitePlayer} we present the class of finite-player stochastic games and state our main result on the derivation of the Nash equilibrium. In Section~\ref{sec:examples} we describe multi-agent models that are included in our framework. Section~\ref{sec:MeanFieldGame} is dedicated to the derivation of the Nash equilibrium in the corresponding infinite-player mean-field game, as well as related $\eps$-Nash equilibrium and convergence results. In Section \ref{sec-fredholm} we derive some essential results on the solution and stability of stochastic Fredholm equations. Finally, Sections \ref{sec-proof-finite}--\ref{sec-proof-infinite} are dedicated to the proofs of our main results.

\section{The Finite-Player Game} \label{sec:FinitePlayer}

In this section we derive the Nash equilibrium of \eqref{eq:J^iintro2} {in semi-explicit form}. Before stating this result, we introduce some essential definitions of function spaces and integral operators.   

\subsection{Function spaces, integral operators} 
We denote by $\langle \cdot, \cdot \rangle_{L^2}$ the inner product on $L^2([0,T], \R)$, that is 
\be \label{in-prod} 
\langle f, g\rangle_{L^2} = \int_0^T f(s) g(s) ds, \quad f,g\in L^2\left([0,T],\mathbb R\right),
\ee
where $\| \cdot\|_{L^2}$ is the induced norm. 
We define $L^2\left([0,T]^2,\mathbb R^{ }\right)$ to be the space of measurable kernels $G:[0,T]^2 \to \R$ such that 
\begin{align*} 
\int_0^T \int_0^T |G(t,s)|^2 dt ds < \infty.
\end{align*}
For any  kernel $G \in L^2\left([0,T]^2,\mathbb R^{}\right)$, we denote by {$\boldsymbol G$} the integral operator   induced by the kernel $G$, that is, 
\begin{align}\label{boldG-def} 
({\boldsymbol G} f)(s)=\int_0^T G(s,u) f(u)du,\quad f \in L^2\left([0,T],\mathbb R\right).
\end{align}
$\boldsymbol G$ is a linear bounded operator from  $L^2\left([0,T],\mathbb R \right)$ into itself. 
We denote by $G^*$ the adjoint kernel of $G$ for $\langle \cdot, \cdot \rangle_{L^2}$, that is 
\begin{align*} 
G^*(s,u) &= \; G(u,s), \quad  (s,u) \in [0,T]^2,
\end{align*}
and by $\boldsymbol{G}^*$ the corresponding adjoint integral operator.  

\subsection{Definition of the $N$-player game} \label{subsec-model-def} 

We present the class of functional stochastic games which are studied in this paper. Let $T>0$ denote a finite deterministic time horizon and let $N\geq 2$ be an integer. We fix a filtered probability space $(\Omega, \mathcal F, \mathbb F:=(\mathcal F_t)_{0 \leq t \leq T}, \mathbb P)$ satisfying the usual conditions of right continuity and completeness and use the notation $\mathbb E_t = \mathbb E [ \cdot | \mathcal F_t]$ to represent the conditional expectation with respect to $\mathcal{F}_t$. 

For $i=0,\ldots,N$, let $c^i$ be a random variable and $b^i= (b^i_t)_{0 \leq t\leq T}$ be progressively measurable processes satisfying
\be \label{ass:P} 
  \int_0^T \mathbb E\left[(b_s^i)^{2} \right] ds < \infty, \quad \mathbb E[c^i] < \infty, \quad i=0,\ldots,N.
\ee

We say that a measurable Volterra kernel $G:[0,T]^2 \to \R$, i.e., such that $G(t,s)=0$ whenever $s \geq t$, is nonnegative definite if for every $f\in L^2\left([0,T],\mathbb R\right)$ we have  
\be \label{pos-def}
\int_{0}^T\int_{0}^T\big(G(t,s)+ G(s,t) \big)f(s)f(t)dsdt \geq 0. 
\ee

Next we define the class of Volterra kernels which will be used in our setting.  
\begin{definition}[Class of admissible kernels $\mathcal G$]  \label{def-ker-admis} 
We say that a nonnegative definite Volterra kernel $G:[0,T]^2 \mapsto \R_{+}$ is in the class of kernels $\mathcal G$ if it satisfies the following conditions:  
\begin{align}\label{eq:assumtionG}
 	\sup_{t\leq T} \int_0^T |G(t,s)|^2 ds  + \sup_{s\leq T} \int_0^T |G(t,s)|^2 dt< \infty. 
 \end{align}  
\end{definition} 

\begin{definition}[Admissible Volterra operator] \label{def-admis-op}
We say that an integral operator {$\boldsymbol G$} is an admissible Volterra operator if {the kernel $G$ by which it is induced is in $\mathcal G$}. 
\end{definition} 
\begin{remark} \label{rem:opNonNeg}
Observe that if $G \in \mathcal{G}$, property~\eqref{pos-def} implies for any $f\in L^2\left([0,T],\mathbb R\right)$, 
\begin{equation}
     \langle f, \boldsymbol G f\rangle_{L^2} = \frac{1}{2}\langle f, (\boldsymbol G + \boldsymbol G^*) f \rangle_{L^2}  \geq 0.
\end{equation}
\end{remark}

We consider $N$ agents $i \in \{1, \ldots, N\}$ who select their controls $u^{i,N}$ from the admissible set  
\be \label{def:admissset} 
\mathcal U :=\left\{ u \, : \, \mathbb{F}\textrm{-progressively measurable s.t. } \int_0^T \mathbb E\left[u_s^2 \right] ds  <\infty \right\}
\ee
and let
\begin{equation} \label{def:uNotation}
\begin{aligned}
 u^{-i,N} := & \, (u^{1,N},\ldots,u^{i-1,N}, u^{i+1,N}, \ldots, u^{N,N}), \\
 \bar u^N := & \, \frac{1}{N} \sum_{j=1}^N u^{j,N}, \qquad \bar u^{-i,N} := \frac{1}{N} \sum_{\substack{j=1 \\ j\neq i}}^N u^{j,N}.
 \end{aligned}
\end{equation}

In the following, we consider operators $(\boldsymbol{{A}}_1, \boldsymbol{{A}}_2, \boldsymbol{{A}}_3, \boldsymbol{{A}}_4)$ which satisfy the following assumptions. 

\begin{assumption} \label{assum-op} 
We assume that $\boldsymbol{A}_1$, $\boldsymbol{A}_3$, and $\boldsymbol{{A}}_4$ are admissible Volterra operators, and $\boldsymbol{A}_2$ is given by 
\be \label{b-bar} 
\boldsymbol{A}_2 := \lambda \id + \boldsymbol{\hat{A}}_2 
\ee
where $\boldsymbol{ \hat{A}}_2$ is an admissible Volterra operator, $\lambda>0$ is a constant and $\id$ is the identity operator, i.e.~
$$
(\id f)(t)=f(t), \quad  \textrm{for all } t \in [0,T], \, f\in L^2\left([0,T],\mathbb R\right).$$
\end{assumption}

Each agent $i \in \{1, \ldots, N\}$ has the following individual performance functional 
\be\label{eq:J^i}
\begin{aligned}
    J^{i,N}(u^{i,N}; u^{-i,N }) := & \, \E\left[ -\langle \bar u^N,\boldsymbol A_1  \bar u^N \rangle_{L^2} - \langle u^{i,N},\boldsymbol A_2 u^{i,N} \rangle_{L^2} - \langle u^{i,N},(\boldsymbol{A}_3 + \boldsymbol{A}_4^*) \bar u^N \rangle_{L^2} \right. \\ 
    & \hspace{14pt} \left. + \langle b^i, u^{i,N} \rangle_{L^2} +\langle b^0, \bar u^N\rangle_{L^2} + c^i \right], 
\end{aligned}
\ee
where $\bar u^N \in \mathcal{U}$ describes the mean-field interaction between the agents.

\begin{remark} \label{rem:bi}
Note that in the objective functional in~\eqref{eq:J^i} the processes $(b^i_t)_{0\leq t \leq T}$, $i=0,\ldots,N$, are fairly general. They can be thought of as incorporating simultaneously different sources of noise: i.e., a common noise affecting all $N$ players as well as player $i$'s independent individual source of noise; and also other random or deterministic factors idiosyncratic to player~$i$. We refer to Section~\ref{sec:examples} below for specific examples.
\end{remark}

Our main goal in this section is to solve simultaneously for each agent $i \in \{1,\ldots,N\}$ their individual optimal stochastic control problem
\begin{equation} \label{def:FPGoptimization}
{\underset{u^{i,N} \in \mathcal{U}}{\text{maximize}} \; J^{i,N}(u^{i,N};u^{-i,N}). }
\end{equation}
This solution will establish a Nash equilibrium in the following usual sense.
\begin{definition} \label{def:Nash}
A set of strategies $\hat{u}^N = (\hat{u}^{1,N},\ldots,\hat{u}^{N,N}) \in \mathcal{U}^N$ is called {a  Nash equilibrium} if for all $i \in \{1,\ldots, N\}$ and for all admissible strategies $v \in \mathcal{U}$ it holds that
\begin{equation*}
    J^{i,N}(\hat{u}^{i,N};\hat{u}^{-i,N}) \geq J^{i,N}(v;\hat{u}^{-i,N}). 
\end{equation*}
\end{definition}

\subsection{Main results for the $N$-player game} 
For convenience, we introduce the following operators, 
\be \label{def:operator}
\begin{aligned} 
\boldsymbol{G} & :=\frac{\boldsymbol{A}_1 }{N^2} +\frac{\boldsymbol{A}_3 +  \boldsymbol{A}_4}{N} + \boldsymbol{\hat{A}}_2  , \\
\boldsymbol{H}& := \frac{\boldsymbol{A}_1 + \boldsymbol{A}^*_1}{N}  + \boldsymbol{A}_3 + \boldsymbol{A}^*_4. 
\end{aligned} 
\ee

Now we are ready to state our main results for the finite-player game. The first result derives {a semi-explicit} solution to the average control $\bar u^N$ in the Nash equilibrium of the game.  To formulate the result, we define for any $G \in \mathcal G$ the kernel
$$
G_t(s,r)=G(s,r) 1_{\{s,r\geq t\}}
$$
where $1_{\{\cdot\}}$ is the indicator set function. We denote by $ {\boldsymbol{G}}_t$ the associated Volterra operator. 

We also set
\begin{equation} \label{b-bar-def} 
    \bar b_t := \frac{1}{N} \sum_{i=1}^N b^i_t, \qquad 0 \leq t \leq T.
\end{equation}

\begin{theorem} \label{thm:opt_ubar} 
Under Assumption \ref{assum-op} and \eqref{ass:P} any Nash equilibrium $\hat{u}^N \in \mathcal{U}^N$ to the game \eqref{eq:J^i} is such that $\bar {\hat u}^N := \frac 1N \sum_{j=1}^N \hat{u}^{j,N}$ is given by
\be \label{eq:opt_ubar} 
\begin{aligned}
\bar {\hat u}^N_{t}  = \left((\id -  \ol {\boldsymbol B})^{-1} \ol {a} \right) (t), \qquad 0\leq t \leq T,
\end{aligned}
\ee
where $\ol {\boldsymbol B}$ is an integral operator of the form \eqref{boldG-def}  with kernel $\ol B$ given by
\begin{equation} \label{eq:FHubarCoeff}
\begin{aligned} 
\ol B(t,s) & :=  1_{\{s<t\}}\frac 1{2\lambda} \bigg( \left\langle   1_{\{t\leq \cdot\}} \ol L(\cdot,t),\ol{\boldsymbol D}_t^{-1}   1_{\{t\leq \cdot\}}  \ol K(\cdot,s)    \right  \rangle_{L^2}   -  \ol K(t,s)    \bigg), \\
\ol a_t & := \frac 1{2\lambda} \left(\bar b_{t} +\frac{1}{N} b^0_t  -  \left\langle  1_{\{t\leq \cdot\}} \ol L(\cdot,t), \ol{\boldsymbol D}_t^{-1} 1_{\{t\leq \cdot\}} \E_t \left[\bar{b}_{\cdot} + \frac{1}{N} b^0_{\cdot}\right] \right\rangle_{L^2} \right), \\
\ol K(t,s) & :=  \frac{N-1}{N} \left(\frac{A_1(t,s)}{N}+A_3(t,s)\right) + G(t,s) , \\ 
\ol L(t,s) & := \frac{N-1}{N} \left( \frac{A_1(t,s)}{N} + A_4(t,s) \right) + G(t,s), 
\end{aligned}
\end{equation}
and 
\bd\label{eq:barDt}
 \ol{\boldsymbol{D}}_t := 2 \lambda \id + \frac{N-1}{N} \boldsymbol{H}_t +  \boldsymbol{G}_t+ \boldsymbol{G}^*_t. 
\ed
for all $t \in [0,T]$.
\end{theorem}  

Using the result of Theorem \ref{thm:opt_ubar} we derive each player's Nash equilibrium strategy~$\hat{u}^{i,N}$.     
\begin{theorem} \label{thm:main-finite} 
Assume that \eqref{ass:P} and Assumption \ref{assum-op} are satisfied, and let $\bar {\hat u}^N$ be as in~\eqref{eq:opt_ubar}. The unique Nash equilibrium of the game \eqref{eq:J^i} is given by
\begin{equation} \label{eq:opt_ui}
\begin{aligned}
\hspace{-1pt} \hat{u}^{i,N}_{t}=\left((\id -  \boldsymbol B)^{-1} a^i \right) (t), \qquad 0\leq t \leq T,  
\end{aligned}
\end{equation}
where 
\begin{equation} \label{eq:FHuiCoeff} 
\begin{aligned}
a^i_t & := \frac 1{2\lambda} \bigg( b^i_{t} +\frac{1}{N} b^0_t - \E_t \left[(\boldsymbol{H} \bar{\hat u}^N)(t)\right]  \\
& \qquad  \qquad -  \left\langle{   1_{\{t\leq \cdot\}} \hat L(\cdot,t)},  \wt{\boldsymbol D}_t^{-1} 1_{\{t\leq \cdot\}} \E_t \left[b^i_{\cdot} +\frac{1}{N} b^0_{\cdot} - (\boldsymbol{H} \bar{\hat u}^N)(\cdot) \right] \right\rangle_{L^2} \bigg), \\
B(t,s) & :=  1_{\{s<t\}}\frac 1{2\lambda} \bigg(  \left\langle 1_{\{t\leq \cdot\}} \hat L(\cdot,t),  \wt {\boldsymbol D}_t^{-1}   1_{\{t\leq \cdot\}}     \hat K(\cdot, s) \right  \rangle_{L^2} -\hat K (t, s)    \bigg), \\
\hat K(t,s) & := G(t,s) - \frac{1}{N} \left(\frac{A_1(t,s)}{N}+A_3(t,s)\right), \\ 
\hat L(t,s) & := G(t,s)-  \frac{1}{N} \left( \frac{A_1(t,s)}{N} + A_4(t,s) \right),
\end{aligned}
\end{equation}
and
$$
 \wt{\boldsymbol{D}}_t :=  2\lambda \id - \frac{1}{N} \boldsymbol{H}_t + \boldsymbol{G}_t+ \boldsymbol{G}^*_t,
$$
for all $t \in [0,T]$.
\end{theorem}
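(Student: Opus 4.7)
The plan is to derive the first-order condition for each player's individual optimization by calculus of variations, recognize it as a stochastic Fredholm equation of the forward-backward type studied in Proposition \ref{L:FredholmConditional} (with $\bar u^N$ provided by Theorem \ref{thm:opt_ubar} serving as a known forcing), and then read off the closed-form solution \eqref{eq:opt_ui}--\eqref{eq:FHuiCoeff} via that proposition. Sufficiency and uniqueness will simultaneously follow from strict concavity of the map $u^{i,N} \mapsto J^{i,N}(u^{i,N}; u^{-i,N})$ driven by $\lambda > 0$.

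Fix $i$ and perturb $u^{i,N}$ by $\epsilon v$ for $v \in \mathcal U$, noting that $\bar u^N$ is simultaneously perturbed by $\epsilon v/N$. Using the polarization identity
\[
\tfrac{d}{d\epsilon}\langle f(\epsilon),\boldsymbol A f(\epsilon)\rangle_{L^2}\big|_{\epsilon=0} = \langle f'(0),(\boldsymbol A+\boldsymbol A^*) f(0)\rangle_{L^2},
\]
setting $\tfrac{d}{d\epsilon} J^{i,N}|_{\epsilon=0} = 0$ for every admissible $v$ and invoking progressive measurability to pass to $\mathcal F_t$-conditional expectations yields the stochastic Fredholm equation
\[
2\lambda\, u^{i,N}_t + \int_0^t \hat K(t,s)\, u^{i,N}_s\, ds + \mathbb E_t\!\int_t^T \hat K(s,t)\, u^{i,N}_s\, ds = \mathbb E_t\!\left[b^i_t + \tfrac{1}{N} b^0_t - ((\boldsymbol H + \boldsymbol H^*)\bar u^N)(t)\right],
\]
where the combined coefficients have been repackaged through the identities $\hat K = G - H/N = \hat A_2 + A_3/N$ and $\boldsymbol H + \boldsymbol H^* = (\boldsymbol A_1 + \boldsymbol A_1^*)/N + (\boldsymbol A_3 + \boldsymbol A_3^*)$. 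With $\bar u^N$ known from Theorem \ref{thm:opt_ubar}, the right-hand side is a progressively measurable process, and the equation is exactly of the form \eqref{eq:introVolterra}.

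Next I would invoke Proposition \ref{L:FredholmConditional} with $\alpha = 2\lambda$ and kernel $\hat K$; its invertibility requirement amounts to strict positivity of $2\lambda\,\id + \hat{\boldsymbol K}_t + \hat{\boldsymbol K}_t^* = \widetilde{\boldsymbol D}_t$, which is immediate since $\hat K$ inherits nonnegative-definiteness from $\hat{\boldsymbol A}_2$ and $\boldsymbol A_3$ (cf.\ Remark \ref{rem:opNonNeg}), so $\widetilde{\boldsymbol D}_t \succeq 2\lambda\,\id$. The explicit resolvent formula from that proposition then delivers \eqref{eq:opt_ui} with $a^i$ and $B$ precisely as in \eqref{eq:FHuiCoeff}. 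For sufficiency and uniqueness, the quadratic part of $J^{i,N}$ in $u^{i,N}$ decomposes into $-\lambda \|u^{i,N}\|_{L^2}^2$ plus nonpositive terms (again by nonnegative-definiteness of $\boldsymbol A_1, \boldsymbol A_3, \hat{\boldsymbol A}_2$), so $J^{i,N}(\cdot; u^{-i,N})$ is strictly concave and the FOC is both necessary and sufficient. I expect the principal obstacle to be the algebraic bookkeeping in the FOC step: carefully disentangling the coupled $u^{i,N}$ and $\bar u^N$ cross-terms and recombining them into the compact kernel $\hat K$ and operators $\boldsymbol H, \widetilde{\boldsymbol D}_t$, so that the equation slots cleanly into the template of Proposition \ref{L:FredholmConditional}.
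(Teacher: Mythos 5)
Your proposal is correct and follows essentially the same route as the paper: establish strict concavity from $\lambda>0$ and the nonnegative-definiteness of $\boldsymbol A_1,\hat{\boldsymbol A}_2,\boldsymbol A_3$, derive the first-order condition by a G\^ateaux-derivative/conditioning argument, rewrite $\bar u^{-i,N}$ via $\bar u^N - u^{i,N}/N$ to isolate the kernel $\hat K = G - H/N = \hat A_2 + A_3/N$ with $\bar u^N$ from Theorem~\ref{thm:opt_ubar} as a known forcing, and then invoke Proposition~\ref{L:FredholmConditional} (after dividing through by $2\lambda$) to obtain \eqref{eq:opt_ui}--\eqref{eq:FHuiCoeff}, with uniqueness following from concavity plus uniqueness of the Fredholm solution. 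This matches the paper's proof in both structure and substance.
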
  
The proofs of Theorems~\ref{thm:opt_ubar} and~\ref{thm:main-finite} are given in Section \ref{sec-proof-finite}. 

\begin{remark}
Our assumptions in Theorems \ref{thm:opt_ubar} and \ref{thm:main-finite} ensure that the operators $\{\ol{\boldsymbol{D}}_t\}_{t\in [0,T]}$, $\{\wt{\boldsymbol{D}}_t\}_{t\in [0,T]}$, $(\id - \ol{ \boldsymbol B})$ and $(\id -   {\boldsymbol B})$ are invertible so that $(\bar{\hat{u}}^N, \hat{u}^{i,N})$ in  \eqref{eq:opt_ubar} and \eqref{eq:opt_ui} are well defined.  
\end{remark}

\begin{remark} 
The results of Theorems~\ref{thm:opt_ubar} and~\ref{thm:main-finite} improve various results for linear-quadratic (LQ) stochastic mean-field games that have appeared recently in the literature.
In \citep{hamaguchi2022linear, huang2015mean, wang2022linear, wang2015linear} a class of LQ stochastic control problems and games with state variables that satisfy Volterra integral equations were studied. The solutions were  described in terms of a system of stochastic Volterra or Fredholm equations, but no explicit solutions were obtained. In \cite{hamaguchi2023maximum} they were solved for the specific case of a fractional kernel and a Gaussian signal. Our results in this section provide {semi-explicit} solutions to this class of problems for general kernels and signals; see Section \ref{S:Volterragame} for additional details. In~\cite{gozzi2015stochastic} a class of stochastic control problems with delay in the controls was studied. The solution to the control problem was characterised in terms of a system of integral equations. Our framework  provides a solution for a single agent and multi-player games with delay in the controls, in the states and in the mean-field interaction. Our framework also allows common and non-semimartingale noise which are not straightforward extensions in these models. See Sections \ref{S:exsystematic} and \ref{S:delaystate} for an elaborate discussion on our contribution to stochastic games with delays in the controls and the states. 
\end{remark}

\begin{remark} 
In Section \ref{sec:examples}  we apply our general framework in order to solve various {popular} models, such as stochastic Volterra linear-quadratic games, models of systemic risk, advertising with delay, and optimal liquidation games with transient price impact. As described in Section \ref{sec:examples}, many of the models in these classes do not have a closed form solution, and our framework establishes a new approach to deriving {semi-explicit solutions}. See Remarks \ref{rem-imp3}, \ref{rem-imp1} and \ref{rem-imp2} for additional details. 
\end{remark}

\begin{remark}\label{R:num}
 The  expressions \eqref{eq:opt_ui} and \eqref{eq:opt_ubar} that determine the optimal strategy $\hat{u}^{i}$ lend themselves
naturally to numerical discretization schemes using the so-called Nyström method where the time interval $[0,T]$ is discretized and the operators/kernels are approximated by corresponding matrices; see for instance~\cite[Section 5]{abi2022optimal} and the Jupyter notebook cited therein for an implementation in a single player case. 
\end{remark}

\begin{remark}
While the model presented in Section \ref{subsec-model-def} is rather general with respect to the operators and noise processes that appear in the cost functionals, the proofs of Theorems~\ref{thm:opt_ubar} and~\ref{thm:main-finite} rely heavily on tools from the theory of Volterra equations on $\mathbb{R}$. Therefore, the extension of our methods to controls taking values in discrete state spaces requires a new set of tools and can be of interest for future research. 
\end{remark}

\begin{remark}
Similar to~\eqref{b-bar}, one can also consider $\boldsymbol{A}_i$ in~\eqref{eq:J^i} for $i\in \{1, 3, 4\}$ to be of the form
\begin{equation*}
    \boldsymbol{A}_i := \lambda_i \id + \boldsymbol{\hat{A}}_i     
\end{equation*}
with constants $\lambda_i \geq 0$ and admissible Volterra operators $\boldsymbol{\hat{A}}_i$. The approach developed in this paper remains applicable; the formulas only become slightly more cumbersome. Therefore, we refrain from including this variant in our analysis. The crucial ingredient is to require $\boldsymbol{A}_2$ to be of the form~\eqref{b-bar} with $\lambda > 0$.
\end{remark}

\section{{Illustrative Examples}} \label{sec:examples} 

In this section, we showcase the applicability of our results from Theorems \ref{thm:opt_ubar} and \ref{thm:main-finite} by illustrating how they can be used to address various instances of stochastic dynamical games which have been proposed in the literature.

We first introduce in Section~\ref{S:Volterragame} a generic framework of a Linear-Quadratic Stochastic Volterra game whose objective functional is shown to be of the form~\eqref{eq:J^i}. Then, we show how such framework accommodates and extends three models which have been studied in the literature: (i) optimal liquidation games with general transient price impact kernels (also known as propagators) and signals (Section~\ref{S:marketimpact}); (ii) an inter-bank lending and borrowing model with delay in the control (Section~\ref{S:exsystematic}); (iii) advertising models with mean-field effect and delay in the state (Section~\ref{S:delaystate}). For the sake of readability, we omit the superscript $N$ from $u^{i,N}, \bar u^N$ and $u^{-i,N}$ throughout this section.

\subsection{Stochastic Volterra Linear-Quadratic games}\label{S:Volterragame}
We show that generic stochastic Volterra linear-quadratic games are included in the general framework which was developed in Section \ref{sec:FinitePlayer}. 

 Define the following controlled Volterra state variables
\begin{align}
    X^i_t &= P^i_t + \int_0^t G_2 (t,s)u^i_s ds + \int_0^t G_3(t,s) \bar u_s ds, \quad i=1,\ldots, N, \label{eq:volterra1} \\
        Y^i_t &= R^i_t + \int_0^t G_1(t,s) \bar u_s ds, \quad i=1,\ldots, N,\label{eq:volterra2}
\end{align}
for some {real-valued} progressively measurable processes $P^i,R^i$  and Volterra kernels $G_1,G_2, G_3$. 
{The $\R^2$-valued process $Z^i= (X^i, Y^i)^\top$  is given by} 
\begin{align}\label{eq:volterraZi}
    Z^i_t = d^i_t + \int_0^t D(t,s) \begin{pmatrix}
         u^i_s \\
         \bar u_s
    \end{pmatrix}ds,
\end{align}
with 
\begin{align*}
d^i_t = \begin{pmatrix}
    P^i_t \\
    R^i_t 
\end{pmatrix} \quad \mbox{and} \quad
D(t,s) =\begin{pmatrix}
 {G_2}(t,s) & G_3(t,s) \\
0 & G_1(t,s) 
\end{pmatrix}.
\end{align*}
Consider the following objective functional to be maximized by the $i$-th player:  
\begin{align}\label{eq:volterraJ}
    \mathcal J^i_{Vol}(u^i, u^{-i}) =\mathbb E\left[ \int_0^T f^{Vol}_{i}(Z^i_t , u^i_t) dt + g^{Vol}_i(Z^i_T) \right],
\end{align}
where the running and terminal criterion have a linear-quadratic dependence in $(Z^i,u^i)$ of the form
\begin{align}
f^{Vol}_{i}( z, u) &= - p u^2 - z^\top Q z + u z^\top q,  \label{eq:volterraf}  \\
g^{Vol}_{i}( z ) &= - z^\top S z + z^\top s^i,\label{eq:volterrag}
\end{align}
such that $p\geq 0$, $Q,S \in \mathbb R^{2\times 2}$,  $q \in \R^2$ and $s^i$ is an $\mathcal F_T$-measurable random variable in $\R^2$, $i=1,\ldots, N$. 

Since the dynamics of each $Z^i$ in \eqref{eq:volterraZi} are linear in $u^i$ and $\bar u$, and $f^{Vol}_i$ and $g_i^{Vol}$ are linear-quadratic in $(Z^i,u^i)$, it is clear that $\mathcal J^i_{Vol}$ is a linear-quadratic functional  in $(u^i, \bar u)$ as in \eqref{eq:J^i}. This is summarized in the following Lemma. 

\begin{lemma} \label{lem:Volterragame}     
The objective functional \eqref{eq:volterraJ} for the stochastic Volterra game can be written in the form of \eqref{eq:J^i} with the following coefficients
\begin{align}
\label{eq:blockA}
\begin{pmatrix}
    \hat A_2(t,s) &  A_3(t,s)   \\
    A_4(t,s) &  A_1(t,s)  
\end{pmatrix} &= 1_{\{s< t\}}D(T,t)^\top (S + S^\top) D(T,s)  \\
& \quad +  1_{\{s< t\}} \int_0^T D(r,t)^\top (Q + Q^\top) D(r,s) dr 
   - \begin{pmatrix}
     q^\top D(t,s)   \\
    0_{\R^2}^\top
\end{pmatrix},  
\end{align}

\begin{align*}
\begin{pmatrix}
    b^i_t   \\
    b^0_t  
\end{pmatrix} &= D(T,t)^\top \left( \E_t [s^i] -  ( S + S^\top)  \E_t [d^i_T]   \right) -   \int_t^T D(r,t)^\top  ( Q + Q^\top)   \E_t [d^i_r]  dr   \\
&   \quad \qquad \quad + \begin{pmatrix}
(d^i_t)^\top q    \\
 0   
\end{pmatrix} , \\
    c^i& = \mathbb   - \int_0^T (d_t^i)^\top   Q d_t^idt -(d_T^i)^\top S  d_T^i   +   (d^i_T)^\top s^i , \quad i=1,\ldots, N,\\
     \lambda &= p. 
    \end{align*}
\end{lemma}

\begin{proof} 
The proof is a straightforward application of Fubini's Theorem and the tower property of conditional expectations.
\end{proof}

\begin{remark}\label{R:Riccati} 
The  solutions that we derived in \eqref{eq:opt_ui} and \eqref{eq:opt_ubar} apply to the particular case of stochastic Volterra LQ games of the form \eqref{eq:volterra1}-\eqref{eq:volterra2}. Solutions to Volterra LQ control problems can be characterised in some cases in terms of solutions to operator Riccati equations and to $L^2$-valued BSDEs; see \cite[Section 6]{abi2022optimal}. In other cases they can be related to infinite-dimensional Riccati equations; see~\cite{abi2021linear, hamaguchi2022linear, wang2022linear} for single-player examples. Our derivation provides {operator solutions} to such Riccati equations for the case of dynamics without control in the volatility. Furthermore, we point out that our expressions share some similarities with formulas that have recently appeared in the computations of Laplace transforms of some quadratic functionals of non-controlled Volterra processes \cite{abi2022characteristic,abi2021markowitz}. 
\end{remark}

In the following we will show that the dynamics \eqref{eq:volterra1} include any stochastic Volterra equation for the state variables $X^i$, where the drift has linear dependence in $X^i$ itself and in $\bar X := \frac  1 N\sum_{j=1}^N X^j$. In order to do that we first introduce the notion of the resolvent of a kernel.

We recall the definition of the product of kernels. For any $G, H \in  L^2\left([0,T]^2,\mathbb R^{}\right)$ we define the $\star$-product as follows
\begin{align*}
(G \star H)(s,u) = \int_0^T G(s,z) H(z,u)dz, \quad  (s,u) \in [0,T]^2,
\end{align*}
which is a well-defined kernel in $L^2\left([0,T]^2,\mathbb R^{}\right)$ due to Cauchy-Schwarz inequality.  Denoting by $\boldsymbol{G}$ and $\boldsymbol{H}$ the two integral operators induced by the kernels $G$ and $H$, we get that $\boldsymbol{G}\boldsymbol{H}$ is  an integral operator induced by the kernel $G\star H$.

For a kernel $K \in L^2([0,T]^2,\R)$, we define its resolvent $R_T \in L^2([0,T]^2,\R)$ by the unique solution to 
\begin{align}\label{eq:resolventeqkernel}
R_T = K + K \star R_T, \quad  \quad  K \star R_T =  R_T \star K.
\end{align} 
Note that the resolvent of a Volterra operator is also a Volterra operator.
In terms of integral operators, this translates into 
\begin{align*}
\boldsymbol{R}_T =  \boldsymbol{K} + \boldsymbol{K}\boldsymbol{R}_T, \quad \boldsymbol{K}\boldsymbol{R}_T=\boldsymbol{R}_T\boldsymbol{K}.
\end{align*}
In particular, if $K$ admits a resolvent,  $({\id}-\boldsymbol{K})$ is invertible and
\begin{align}\label{eq:integralreso}
({\id}-\boldsymbol{K})^{-1}=\id+\boldsymbol{R}_T.
\end{align}

\begin{lemma}\label{L:volterralinear}
For $i=1,\ldots, N$, let  $M^i$ be a progressively measurable process with sample paths in $L^2([0,T],\R)$ and  $K,H:[0,T]^2 \to \mathbb R$ be two Volterra kernels in $\mathcal{G}$. The $N$-dimensional linear system of Volterra equations 
    \begin{align}\label{eq:Xilinear}
        X^i_t  = M^i_t + \int_0^t K(t,s) X^i_s ds + \int_0^t H(t,s) \bar X_s ds, \quad i=1,\ldots, N,
    \end{align}  
    where $\bar X = \frac 1 N \sum_{j=1}^N X^j$,
    admits a unique solution given by 
    \begin{align}\label{eq:Xilinear2}
    X^i_t =  \widetilde M^i_t + \int_0^t R^K(t,s) \widetilde M^i_sds, \quad i=1,\ldots, N,
    \end{align}
    with 
    \begin{align*}
    \widetilde M^i_t &=  M^i_t + \int_0^t H(t,s) \bar M_s ds  + \int_0^t \left( H\star R^{K+H} \right)(t,s) \bar M_s ds, \quad i=1,\ldots, N,  \\
             \bar M_t &= \frac 1 N \sum_{j=1}^N M^j_t,   
    \end{align*}
    and  $R^K,R^{K+H}$ the resolvents of $K$ and $K+H$, respectively.  
\end{lemma}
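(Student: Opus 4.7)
The plan is to decouple the $N$-dimensional system by first solving for the mean $\bar X$ and then substituting it back to obtain a scalar linear Volterra equation for each individual $X^i$. First I would average equation \eqref{eq:Xilinear} over $i=1,\ldots,N$ to observe that $\bar X$ solves the scalar Volterra equation
\begin{equation*}
\bar X_t = \bar M_t + \int_0^t \bigl(K(t,s)+H(t,s)\bigr)\bar X_s \, ds,
\end{equation*}
driven by $\bar M$ with kernel $K+H \in \mathcal G$. Using the resolvent $R^{K+H}$ and the inversion identity \eqref{eq:integralreso}, this yields immediately the closed-form expression $\bar X_t = \bar M_t + \int_0^t R^{K+H}(t,s)\bar M_s\, ds$.

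Next, I would plug this formula for $\bar X$ back into \eqref{eq:Xilinear} to view each $X^i$ as satisfying the scalar linear Volterra equation
\begin{equation*}
X^i_t = \widetilde M^i_t + \int_0^t K(t,s) X^i_s\, ds, \qquad \widetilde M^i_t := M^i_t + \int_0^t H(t,s) \bar X_s\, ds,
\end{equation*}
with a \emph{known} forcing term $\widetilde M^i$. Inserting the expression for $\bar X$ from Step~1 and applying Fubini's theorem to swap the order of integration in the iterated integral $\int_0^t H(t,s) \int_0^s R^{K+H}(s,r)\bar M_r\, dr\, ds$ produces precisely the announced representation involving the $\star$-product $H \star R^{K+H}$. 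Applying \eqref{eq:integralreso} once more, now with the resolvent $R^K$ of $K$, gives the claimed closed form for $X^i$.

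Uniqueness reduces to the standard fact that a scalar linear Volterra equation with $L^2$-admissible kernel has at most one solution: taking the difference of two solutions and iterating the Volterra convolution yields, by a Young-type estimate, an $n$-th iterate with operator norm shrinking like $C^n/\sqrt{(n-1)!}$, forcing the difference to vanish. Progressive measurability of $X^i$ is inherited from $\widetilde M^i$ since the resolvent kernels are deterministic, and square-integrability of the sample paths follows from the boundedness estimate \eqref{eq:assumtionG} on $K,H$ (and hence on $R^K, R^{K+H}$) combined with Cauchy--Schwarz.

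The main technical point to verify is the existence of the $L^2$-resolvents $R^K$ and $R^{K+H}$ for Volterra kernels in $\mathcal G$; this is what makes the inversions in Steps~1 and~3 legitimate. For Volterra kernels satisfying \eqref{eq:assumtionG} this is classical: the Neumann series $\sum_{n\geq 1} K^{\star n}$ converges in $L^2([0,T]^2,\R)$ by the aforementioned Young-type bound on iterated Volterra convolutions, and the partial sums identify the resolvent as characterized by \eqref{eq:resolventeqkernel}. Beyond this input, the entire argument is a direct two-step substitution followed by Fubini, so I do not anticipate any further obstacle.
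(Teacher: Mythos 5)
Your proposal is correct and follows essentially the same route as the paper's proof: average over $i$ to obtain the scalar Volterra equation for $\bar X$, invert it with the resolvent $R^{K+H}$, substitute back, and invert the resulting equation for $X^i$ with $R^K$. The paper simply cites Corollary 9.3.16 of Gripenberg--Londen--Staffans for the existence of resolvents of kernels in $\mathcal G$ where you sketch the Neumann-series argument, and it leaves the Fubini and uniqueness details implicit, so your write-up is just a more detailed version of the same argument.
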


\begin{proof} We first observe that any Volterra kernel in $\mathcal G$ admits a resolvent kernel, see for instance Corollary 9.3.16 in \cite{gripenberg1990volterra}. 
Thanks to the resolvent equation \eqref{eq:integralreso}, the  solution to the linear Volterra equation satisfied by the mean process $\bar X$,
\begin{align*}
    \bar X_t = \bar M_t  + \int_0^t \left(K(t,s) + H(t,s)\right) \bar X_s ds, 
\end{align*}
is given by 
\begin{align}
    \bar X_t = \bar M_t + \int_0^t R^{K+H}(t,s) \bar M_s ds.
\end{align}
Plugging this expression back in \eqref{eq:Xilinear} together with another application of \eqref{eq:integralreso} yields \eqref{eq:Xilinear2}.
\end{proof}

\begin{remark}\label{R:volterrasimple}
A particular case of interest in our application of Lemma~\ref{L:volterralinear} to controlled Volterra processes is given by
\begin{align*}
   M^i_t =  P^i_t+ \int_0^t  G(t,s)  u^i_s ds + \int_0^t \bar G(t,s) \bar u_s ds,
\end{align*}   
where $P^i$ is progressively measurable, $G$, $\bar G$ are Volterra kernels in $\mathcal G$ and $u$ is the control. 
In particular, we will apply Lemma~\ref{L:volterralinear} with $\bar G = K \equiv 0$ so that, using the fact that 
$H + H\star R^H = R^H$, the solution in \eqref{eq:Xilinear} simplifies to the form of \eqref{eq:volterra1}, that is, 
\begin{align}
    X^i_t &=  M^i_t   + \int_0^t \left( H + H\star R^{H} \right)(t,s) \bar M_s ds\\
    &=  M^i_t   + \int_0^t  R^{H}(t,s) \bar M_s ds \\
    &= P^i_t +  \int_0^t  R^{H}(t,s) \frac 1 N \sum_{j=1}^N 
 P_t^j ds + \int_0^t  (R^{H}\star G)(t,s) \bar u_s ds    +\int_0^t G(t,s) u^i_s ds, \label{eq:Xilinear3}
    \end{align}
for $i=1,\ldots, N.$
\end{remark}

\subsection{Propagator models: optimal liquidation with  transient impact and signals}\label{S:marketimpact} 

The framework for optimal liquidation with transient impact and price predicting signals was first introduced in \cite{Lehalle-Neum18}. The stochastic game, where several financial agents aim to liquidate in an optimal way their positions in a risky asset was presented in \cite{neuman2021trading}. In the following we briefly describe the model. 

\noindent \textbf{The model.} We consider $N$ traders with an initial position of $x^i_0 \in \mathbb{R}$ shares in a risky asset for the $i$-th trader, $i=1,\ldots, N$. The number of shares the trader holds at time $t$ is prescribed as 
    \begin{align} \label{def:Q}
    \wt X^i_t = x^i_0 -\int_0^t u^i_s ds, 
    \end{align}
where $(u^i_s)_{s \in [0,T]}$ denotes the selling speed which is chosen by the trader. We assume that the traders' individual and aggregated trading activity causes price impact on the risky asset's execution price. The actual price at which the orders are executed for the $i$-th trader is given by
\be \label{def:S}
S^i_{t} := N^i_{t} - \lam u^i_t -  \wt Y^{\bar{u}}_t, \qquad 0 \leq t \leq T, 
\ee
where $N^i$ denotes some unaffected progressively measurable price process incorporating an individual (or common) trading signal. The process
\begin{align}\label{z-def} 
\wt Y_t^{\bar u}= \int_0^t G(t,s)\bar u_s ds, \qquad 0 \leq t \leq T,
\end{align}
captures an aggregated linear decaying price impact with a general propagator kernel $G$ and $\lambda>0$ captures instantaneous slippage costs which are incurred by the $i$-th trader's execution strategy.  

\begin{remark}
In \citep{neuman2022optimal, neuman2021trading} an exponentially decaying kernel of the form $G(t,s)= 1_{\{s < t\}} H(t-s)$ with $ H(t) = e^{-\rho t}$ for some $\rho > 0$ is specified, so the control problem can be regarded as Markovian. However, in practice, the decay of the price impact has been shown to be slower than exponential, and more realistically modeled by a power-law kernel, which makes the problem non-Markovian and {difficult} to solve. For the single player case, the problem with general propagator $G \in \mathcal G$ and a progressively measurable $P$ was solved recently in \cite{abi2022optimal}. We refer to \cite[Example 2.5]{abi2022optimal} for some examples of realistic kernels and to the references therein for the empirical motivation.    
\end{remark}

\noindent \textbf{The objective criterion. } The aim of each trader is to maximize the profit and loss from implementing the trading strategy $u^i$ given by 
$$ -\int_0^T S^i_t d\wt X^i_t  + N^i_T \wt X^i_T = \int_0^T S^i_t u^i_t dt + N^i_T \wt X^i_T$$
in the presence of running and terminal penalizations on the trader's inventory of the form $\phi (\wt X^i_t)^2$  and $\varrho (\wt X^i_T)^2$ with $\phi,\varrho>0$, which encourage the trader to liquidate her position. This leads to the following performance functional 
\begin{equation} \label{def:objective}
J^{i}_{liq}(u^i) = \mathbb{E} \Bigg[ \int_0^T   f^{liq}_{i}(t,\wt X^i_t,\wt Y^{\bar u}_t,u^i_t) dt   + g^{liq}_i (\wt X^i_T)\Bigg],
\end{equation}
with 
\begin{align}
    f^{liq}_{i}(t,x,y,u) &=   -\lambda u^2 - \phi x^2   - u y +   N^i_t u , \quad   
    g^{liq}_i (x) = - \varrho x^2 + N^i_T x.  
\end{align}

\noindent \textbf{The correspondence with our Volterra game framework.}
\begin{tcolorbox}
The optimal liquidation problem with aggregated transient price impact and signals corresponds to a Volterra game in the sense of Section~\ref{S:Volterragame} with  controlled variables $X^i$ in~\eqref{eq:volterra1} and $Y^i$ in~\eqref{eq:volterra2} given by $(X^i, Y^i)  = (\wt X^i, N^i- \wt Y^{\bar u})$ with
$$  G_2(t,s) = - 1_{s\leq t}, \quad    G_3 = 0 \quad G_1 = -G  \quad P^i = x_0^i, \quad R^i = N^i.$$
The objective criterion in~\eqref{def:objective} can be written as in~\eqref{eq:volterraJ} in terms of $f^{Vol}$ in~\eqref{eq:volterraf} and $g^{Vol}$ in~\eqref{eq:volterrag} where 
\begin{align*}
    p &= \lambda ,  \quad 
    Q =  {\begin{pmatrix} \phi & 0 \\ 0 & 0 \end{pmatrix}},  \quad 
    q =  {(0, 1)^\top},  \quad 
    S =  {\begin{pmatrix} \varrho & 0 \\ 0 & 0 \end{pmatrix}}, \quad 
    s^i   = (P^i_T,0)^\top. 
\end{align*}
\end{tcolorbox}

In this case, using \eqref{eq:blockA}, the kernels $(A_1, \hat A_2, A_3, A_4)$ take the form  
$$
A_1(t,s) = A_4(t,s) = 0, \; 
\hat A_2(t,s) = 2 \varrho\, 1_{\{s<t\}} + 2 \phi (T-t)\, 1_{\{s<t\}}, \; 
A_3(t,s) = 1_{\{s<t\}}\, G(t,s),
$$
so that our main Assumption~\ref{assum-op} holds whenever the kernel $G$ is nonnegative definite; see \cite[Lemma~5.4]{abijaber2024optimal} for obtaining the representation for $\hat A_2$.

\begin{remark}[Improvements with respect to the related literature] \label{rem-imp3}
In the single player case, the above non-Markovian problem was solved only very recently in~\cite{abi2022optimal} by making an ansatz on the value function combined with a martingale verification argument. Our approach not only extends such results of \cite{abi2022optimal} to a multi-player price impact game but also provides a new and direct argument for deriving the solution for the single player case. 
\end{remark}

\subsection{Delay in the control: an inter-bank lending and borrowing model}\label{S:exsystematic}

In this subsection we show how our results extend models on systemic risk with delay studied in \cite{carmona2018systemic,fouque2018mean}.
 
\noindent \textbf{The model.}  The log-monetary reserves $x^i$ of $N$ banks, $i=1,\ldots, N$, are   modeled as follows
\begin{align}\label{eq:systemicvar}
dx^i_t = \left( h_i(t) + \int_{[0,t]} \nu(ds) u^i_{t-s} \right) dt + \sigma_i dW^i_t ,\quad x^i_0 \in \R, 
\end{align}
with $h_i$ a progressively measurable process, $(W^1,\ldots, W^N)$ an $N$-dimensional Brownian motion and  $\nu$ a (signed) measure on $[0,T]$ of locally bounded variation. The control $u^i$ of the $i$-th bank corresponds to the rate of lending or borrowing (depending on the sign of $u^i$) from a central bank. The delay in the control reflects the repayments after a fixed time. The main example is given by $u^i_t - u^i_{t-\tau }$ which corresponds to the case $\nu = \delta_0 - \delta_{\tau}$ for some fixed time $\tau \geq 0$. In this case, if the $i$-th bank borrowed from the central bank an amount $u_{t}dt$ at some time $t$, then $x^i_t$ increases by $u_t dt$. In addition, since the amount needs  to be paid back to the central bank at a later time $t + \tau $, the log-monetary reserve  $x^i_{t+\tau}$  decreases by $u^i_{(t+\tau) - \tau} dt=u^i_t dt$, which explains the form of the drift $u^i_t - u^i_{t-\tau}$ at time $t$.

\begin{remark}
    We note that in \cite{carmona2018systemic,fouque2018mean}, the dynamics of $x^i$ are stated slightly differently using the convention of delayed differential equations: 
    \begin{align*}
   d x^i_t = \left( \int_{[0,\tau]} \theta(ds) u^i_{t-s} \right) dt + \sigma_i dW^i_t, \quad x^i_0 \in \mathbb R, 
    \end{align*}
for some fixed $\tau \geq 0$,    with an initialization of the control of the form 
    $$ \quad u^i_s = \phi^i(s), \quad \mbox{for } s \in [-\tau, 0],$$
    for some specified function $\phi$ and measure $\theta$. 
    Setting $\nu(ds) = 1_{s\leq \tau} \theta(ds)  $  and $h_i(t) =  1_{t\leq \tau }\int_{[t,\tau]} \theta(ds) u^i_{t-s} = 1_{t\leq \tau } \int_{[t,\tau]}\theta(ds) \phi^i(t-s) $ yields the dynamics in~\eqref{eq:systemicvar}.
\end{remark}

\noindent \textbf{The objective criterion.} The $i$-th bank chooses the strategy $u^i$ to minimize the following objective criterion that involves the aggregated average $\bar x = \frac{1}{N} \sum_{i=1}^N x^i$:
\begin{align} \label{eq:objective_sys}
    \mathcal J^i_{sys}(u^i, u^{-i}) =\mathbb E\left[ \int_0^T f^{sys}(x^i_t, \bar x_t , u^i_t) dt + g^{sys}(x^i_T, \bar x_T) \right],
\end{align}
with the running and terminal costs given by
\begin{align*}
f^{sys}(x, \bar x , u) &= \frac{u^2}2 - \beta u(\bar x - x ) + \frac{\varepsilon}{2}(\bar x -x )^2, \\
g^{sys}(x, \bar x) &= \frac{c}{2}(\bar x - x)^2.
\end{align*}
The running cost of borrowing/lending is given by $(u^i)^2/2$, the parameter $\beta>0$ controls the incentive to borrow or lend depending on the difference with the average capitalization level; the quadratic term in $(\bar x - x)^2$ in the running and terminal costs penalize departure from the average. The condition $\beta^2\leq \epsilon$ ensures convexity.

\noindent \textbf{The correspondence with our Volterra game framework.} Setting 
$$G(t):= \nu([0,t]), \quad t\geq 0,$$ writing \eqref{eq:systemicvar} in integral form, and applying Lemma~\ref{L:Fubini} below,   we get the following Volterra representation for $x^i$:
\begin{align}\label{eq:sysrep}
   x_t^i =  P^i_t +  \int_0^t G(t-s) u^i_s ds 
\end{align}
with $P^i = x^i_0 + \int_0^{\cdot} h_i(s)ds + \sigma_i W^i$.

\begin{tcolorbox}
Clearly, this corresponds to a  Volterra game in the sense of Section~\ref{S:Volterragame} where the controlled variables $X^i$  in \eqref{eq:volterra1} and $Y^i$ in \eqref{eq:volterra2} are given by $(X^i, Y^i)  = (x^i,\bar x)$ with  
$$  G_1 = {G_2} = G, \quad {G_3 = 0}, \quad R^i = \frac{1}{N} \sum_{j=1}^N P^j,$$
and where the negative of the objective criterion in \eqref{eq:objective_sys} to be maximized is given in terms of $f^{Vol}$ as in \eqref{eq:volterraf} and $g^{Vol}$ as in \eqref{eq:volterrag} with parameters
\begin{align*}
    p &= \frac 1 2,  \quad 
    Q =  \frac{\epsilon}{2} \begin{pmatrix}
        1  & -1  \\
        -1   & 1
     \end{pmatrix},  \quad 
    q = \beta(-1, 1)^\top,  \quad 
    S =   \frac{c}{\epsilon} Q, \quad 
    s^i   = (0,0)^\top. 
\end{align*}
\end{tcolorbox}

In this case, using \eqref{eq:blockA}, the kernels $(A_1, \hat A_2, A_3, A_4)$ take the form 
\[
A_1(t,s) = \hat A_2(t,s) - 1_{s < t} \beta G(t,s) , \quad 
A_3(t,s) = -\hat A_2(t,s), \quad A_4(t,s) = -A_1(t,s)
\]
and
\[
\hat A_2(t,s) = 1_{\{s<t\}} \left( c\, G(T,t)G(T,s) + \epsilon \int_0^T G(r,t)G(r,s)dr + \beta G(t,s) \right) .
\]
Hence, our main Assumption~\ref{assum-op} is in general not satisfied. However, Assumption~\ref{assum-op} is not necessary for solving the game, and our explicit solutions in Theorem~\ref{thm:main-finite} provide the first-order condition in closed form whenever the game is well-posed, which can in certain specific cases be obtained under weaker assumptions than Assumption~\ref{assum-op}.

\begin{example}
We briefly point out some {examples} for $G$ in this setting. 
\begin{itemize}
     \item [\textbf(i)] If $\nu(ds)=g(s) ds$ for some locally integrable function $g$, then 
     $G(t) = \int_0^t g(s) ds$, for all $t\geq 0$.
     \item  [\textbf(ii)] If $\nu(ds) =  \delta_0 - \delta_{\tau}(ds)$ for some $\tau \geq 0$, then $G(t) = 1_{\{t \geq 0\}} -  1_{\{t \geq \tau \}}$. 
     \end{itemize}
\end{example}

\begin{remark}[Improvements with respect to the related literature] \label{rem-imp1} In \cite{carmona2018systemic}, the equilibrium to the above inter-bank lending and borrowing game was characterised as a solution to a system of integral equations  \cite[equations (41)-(45)]{carmona2018systemic}. Our main results not only provide  novel {semi-explicit} operator formulas for the game, but also allow with no additional effort,  non-trivial realistic extensions of the game.
First, we allow the inclusion of  lending and
borrowing between the banks by introducing an interacting term
$$ \frac{1}{N}\sum_{j=1}^N \left(x^j_t - x^i_t\right), $$
in the  drift of $x^i$ in \eqref{eq:systemicvar}. This term represents the
rate at which bank $i$ borrows from or lends to bank $j$ in the spirit of the model in \cite{carmona2013mean} but with the addition of the delay feature, which was not considered \cite{carmona2018systemic}. 
Second, we can include a common Brownian noise $W$ by updating $P^i$ to $P^i = \int_0^{\cdot} h_i(s) ds + \sigma^i W^i + \tilde \sigma^i W$ in \eqref{eq:sysrep}.  Third, we allow the possibility of adding a more general independent and  correlated noise which is not necessarily a semimartingale, where the usual techniques that rely on Itô's formula  break down. Finally, we allow the inclusion of delays in the mean-field interacting term in the drift, such feature is discussed in the next section.
\end{remark}

\begin{lemma}\label{L:Fubini}
Let $\nu$ be a measure on $\R_+$ of locally finite total variation.  Set $G(t)=\nu([0,t])$, for all $t\geq 0$,  then, for any locally  integrable measurable function $u$ it holds that 
    \begin{align*}
\int_{[0,t]} G(t-s) u_s ds = \int_{[0,t]} \left(\int_{[0,l]}  u_{l-r}  \nu(dr) \right) dl, \quad t \geq 0. 
\end{align*}
\end{lemma}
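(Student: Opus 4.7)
The plan is to rewrite both sides as a double integral over $[0,t]\times[0,t]$ and then apply Fubini's theorem (justified by the locally finite total variation of $\nu$ and the local integrability of $u$), after which a simple change of variables identifies them.

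First, I would expand the definition of $G$ in the left-hand side. Writing $G(t-s) = \int_{\R_+} \mathbf{1}_{\{r\leq t-s\}}\,\nu(dr)$ and inserting this into the Lebesgue integral in $s$ yields
\begin{equation*}
\int_{[0,t]} G(t-s) u_s\, ds = \int_{[0,t]}\!\!\int_{\R_+} \mathbf{1}_{\{r\leq t-s\}}\, u_s \,\nu(dr)\,ds.
\end{equation*}
Since the indicator $\mathbf{1}_{\{r\leq t-s\}}$ forces $r\in[0,t]$, the inner integral is really over $[0,t]$, and I can apply Fubini's theorem to exchange the order. The product measure $|\nu|\otimes ds$ on $[0,t]^2$ has total mass $t\,|\nu|([0,t]) < \infty$, so multiplied by the locally integrable $u$ the integrand is absolutely integrable. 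Swapping gives
\begin{equation*}
\int_{[0,t]} G(t-s) u_s\, ds = \int_{[0,t]}\!\!\int_{[0,t-r]} u_s \, ds\,\nu(dr).
\end{equation*}

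Next, I would treat the right-hand side in parallel. Applying Fubini (with the same integrability justification) to exchange the order of the Lebesgue integral in $l$ and the $\nu$-integral in $r$, then using the support constraint $r\leq l\leq t$, I obtain
\begin{equation*}
\int_{[0,t]}\!\!\int_{[0,l]} u_{l-r}\,\nu(dr)\,dl = \int_{[0,t]}\!\!\int_{[r,t]} u_{l-r}\, dl\,\nu(dr).
\end{equation*}
The change of variable $s = l-r$ in the inner integral turns $\int_{[r,t]} u_{l-r}\,dl$ into $\int_{[0,t-r]} u_s\,ds$, which matches the expression derived for the left-hand side. This concludes the identification.

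The only real subtlety is the Fubini step, but it is standard here: local finiteness of $|\nu|$ together with local integrability of $u$ guarantees that $|u_s|\cdot \mathbf{1}_{\{r\leq t-s\}}$ is integrable on $[0,t]\times[0,t]$ against $ds\otimes |\nu|(dr)$. I do not anticipate any other obstacle; the argument is purely measure-theoretic bookkeeping and does not use any structural assumption on $u$ beyond local integrability or on $\nu$ beyond locally finite variation.
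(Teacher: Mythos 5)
Your proof is correct and uses essentially the same ingredients as the paper's: expanding $G$ via its defining measure, applying Fubini (justified by local finiteness of $|\nu|$ and local integrability of $u$), and a change of variables. The paper arranges these as a single one-directional chain from the left-hand side to the right-hand side, while you meet in the middle from both sides, but this is only a cosmetic difference.
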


\begin{proof} 
This follows from  an application of Fubini's theorem. 
\end{proof}
\subsection{Delay in the state:  an advertising model with mean-field effect}\label{S:delaystate}
 
Our next application is motivated by problems of stochastic optimal advertising \cite{nerlove1962optimal, gozzi245stochastic, bensoussan2017linear} with time-delay and persistence of advertising expenditures, in the presence of competition between  products or firms. 

\noindent \textbf{The model.} We consider a company launching $N$ products such that the dynamics for the  advertising goodwill  $x^i$ of the $i$-th product to be launched are given by
\begin{align}\label{eq:stochastic integro}
dx^i_t = \left( \beta u^i_t + h_i(t) + \int_{[0,t]} \nu(ds) x^i_{t-s} +  \int_{[0,t]} \mu(ds) \bar x_{t-s}   \right) dt + \sigma_i dW^i_t,  \quad  x^i_0 \in \mathbb R. \qquad
\end{align}
Here $u^i$ is the intensity of advertising spending, $(W^1,\ldots, W^N)$ is a possibly correlated $N$-dimensional Brownian motion, $\beta \geq 0$ is a constant
advertising effectiveness factor. $h_i$ is a deterministic or external stochastic factor for image deterioration in the absence of advertising which can be common to several products. $\nu$ is the distribution of the forgetting time, and $\mu$ is the distribution of a time-delayed interactive competition among the products.  Note that when
$\nu$, $h_i$, $\mu$, and $\sigma_i$ are identically zero, equation \eqref{eq:stochastic integro} reduces to the classical model of \citet{nerlove1962optimal}.

\begin{remark}
    We note that  the dynamics of $x^i$ are usually stated slightly differently using the convention of delayed differential equations: 
    \begin{align*}
   d x^i_t = \left( \beta u^i_t + \int_{[0,\tau]} \theta_1(ds) x^i_{t-s} +  \int_{[0,\tau]} \theta_2(ds) \bar x_{t-s}   \right) dt + \sigma_i dW^i_t, 
    \end{align*}
    for some fixed $\tau \geq 0$,    with an initialization of the controlled variables of the form 
    $$ \quad x^i_s = \phi^i(s), \quad \mbox{for } s \in [-\tau, 0],$$
    for some specified functions $\phi^i$ and measures $\theta_1$ and $\theta_2$. 
    Setting $\nu(ds) = 1_{s\leq \tau} \theta_1(ds)  $,  $\mu(ds)  = 1_{s\leq \tau} \theta_2(ds) $
    and 
    \begin{align}
    h_i(t) &=  1_{t\leq \tau }\int_{[t,\tau]} \left(\theta_1(ds) x^i_{t-s} +  \theta_2(ds) \bar x_{t-s}\right)\\
    &= 1_{t\leq \tau } \int_{[t,\tau]} \left(\theta_1(ds) \phi^i(t-s) +  \theta_2(ds)\frac 1 
 N \sum_{j=1}^N \phi^j(t-s)\right),
    \end{align}
 yields the dynamics \eqref{eq:stochastic integro}.
\end{remark}

\noindent \textbf{The objective criterion.}
The following objective functional to be maximized was presented in \cite[Section 6]{gozzi245stochastic}, 
\begin{align}\label{eq:objectivead}
    \mathcal J^i_{adv}(u^i, u^{-i}) =\mathbb E\left[ - \int_0^T \lambda (u^i_t)^2   dt +  \beta x^i_T  \right].
\end{align}

\noindent \textbf{The correspondence with our Volterra game framework.}
We start by showing that the dynamics \eqref{eq:stochastic integro} can be written in the form \eqref{eq:volterra1}. 
First, an application of Lemma~\ref{L:Fubini} shows that  \eqref{eq:stochastic integro} can be written in  the Volterra form 
\begin{align}\label{eq:stochasticintegro}
x^i_t = M^i_t  +  \int_0^t K(t-s)x^i_{s}ds +  \int_0^t H(t-s) \bar x_{s}ds, 
\end{align}
with $M^i=\int_0^{\cdot}h_i(s) ds + \sigma_i W^i  +  \int_0^{\cdot} \beta u^i_s ds  $, $K(t)=\nu([0,t])$ and $H(t)=\mu([0,t])$. 
Then, an application of Lemma~\ref{L:volterralinear} shows clearly that $x^i$  can be written in the form \eqref{eq:volterra1}. {However, to ease notations, we will highlight the application of Lemma~\ref{L:volterralinear} for the case {$H$ locally integrable,} $\nu =0$ and $h_i=0$, so that $K=0$:}
\begin{tcolorbox}
   Thanks to Remark~\ref{R:volterrasimple} applied to \eqref{eq:stochasticintegro} for the case $\nu=0$ and $h_i=0$,  we can deduce that the advertising problem with delay in the  mean-field interaction corresponds  to a  Volterra game where the controlled variables $X^i$  in \eqref{eq:volterra1} and $Y^i$ in \eqref{eq:volterra2} are given by $(X^i, Y^i)  = (x^i,0)$ with
$$  G_2(t,s) =  \beta 1_{s\leq t} , \quad G_3(t,s) = \beta \int_s^{t}R^H(t,u) du, \quad G_1 = 0,$$
$$ P^i = \sigma_i W^i + \int_0^{\cdot} R^H(\cdot,s) \frac 1 N \sum_{j=1}^N \sigma_j W^j_s ds, \quad R^i =0, $$
where $R^H$ is the resolvent of $H$,
and where the objective criterion in \eqref{eq:objectivead} is defined in terms of $f^{Vol}$ of \eqref{eq:volterraf} and $g^{Vol}$ of \eqref{eq:volterrag} with 
\begin{align*}
    p &= \lambda ,  \quad 
    Q = 0_{\R^{2\times 2}},  \quad 
    q = (0, 0)^\top,  \quad 
    S =  0_{\R^{2\times 2}} , \quad 
    s^i   = (\beta,0)^\top. 
\end{align*}
Note that since $H$ is locally integrable, the existence of the resolvent $R^H$ is ensured by \cite[Theorem 2.3.1]{gripenberg1990volterra}.
\end{tcolorbox}

In this case, using \eqref{eq:blockA}, the kernels $(A_1, \hat A_2, A_3, A_4)$ are all equal to $0$
so that Assumption~\ref{assum-op} is trivially satisfied.

\begin{remark}[Improvements with respect to the related literature]\label{rem-imp2}  To the best of our knowledge, the advertising multi-player game above has not been solved yet. A similar advertising game has been briefly presented  in \cite[Section 2.2]{bensoussan2017linear} in the context of certain Linear-Quadratic Stackelberg games.  In \cite{gozzi2015stochastic}, a one-player game has been considered with $\mu$ and $\nu$ in \eqref{eq:stochastic integro} absolutely continuous with respect to the Lebesgue measure. Such control problem has been characterized, under the particular setting of \eqref{eq:objectivead}, in terms of a system of integral equations, see \cite[equations (30)--(32)]{gozzi2015stochastic}. Our framework  provides the first {solution method} for a multi-player advertising game with delay in the individual state and in the mean-field state. {Note that it} also allows for the inclusion of delay in the control, common and non-semimartingale noise which are usually not straightforward extensions. As the authors point out in \cite[Section 5]{gozzi2015stochastic}: the only technique that seems to work when combining delay in the control and in the state is the framework of viscosity solutions which does not provide enough regularity. 
\end{remark}

\begin{remark} Combined with the ideas in Section \ref{S:exsystematic}, we can also incorporate linear time delays in the control $u^i$ and in the dynamics of $x^i$ in  \eqref{eq:stochastic integro}; see Lemma~\ref{L:volterralinear} and Remark~\ref{R:volterrasimple}. Our framework allows for {semi-explicit} solutions even when the delays depend on a measure, which usually introduces technical difficulties with the usual infinite dimensional lifts applied in the study of controlled delayed equations; see, for instance,  \cite[Remark 1]{gozzi245stochastic} and \cite{gozzi2015stochastic}.
\end{remark}

\section{The Mean-Field Game} \label{sec:MeanFieldGame}
In this section we formulate and solve the corresponding asymptotic mean-field game version of the finite-player game from Section~\ref{sec:FinitePlayer} when the number of players $N$ tends to infinity. We prove that the finite-player Nash equilibrium strategies converge in the infinite population limit to the mean-field game equilibrium and show that the latter yields an approximate equilibrium for the finite-player setup from Section~\ref{sec:FinitePlayer}. We first consider a generic player in the mean-field game limit and then provide an equivalent formulation in terms of infinitely many players. The main advantage of the analysis in this section is that the mean-field limit provides both simpler expressions to the equilibrium strategies and more relaxed conditions for existence of an equilibrium, compared to the finite-player game. 

Let $(\Omega, \mathcal F_T, \mathbb F=(\mathcal F_t)_{0 \leq t\leq T}, \mathbb P)$ denote the filtered probability space from Section~\ref{sec:FinitePlayer}. Recall that we use the notation $\mathbb E_t = \mathbb E [ \cdot | \mathcal F_t]$ to represent the conditional expectation with respect to $\mathcal{F}_t$. Moreover, let $c$ denote an $\mathcal F_T$-measurable random variable and let $\beta, \beta^0$ denote $\mathbb F$-progressively measurable processes such that $\beta^0$ is independent of~$\beta$. We set
\begin{equation} \label{def:b_generic}
     b := \beta + \beta^0
\end{equation}
and denote by $\mathbb F^0:=(\mathcal F^0_t)_{0 \leq t\leq T}$ the filtration generated by $\beta^0$ satisfying the usual conditions. 

\begin{assumption} \label{ass:filtration}
    We assume that for all $t \in [0,T]$, $\mathcal{F}_T^0$ and $\mathcal{F}_t$ are conditionally independent given $\mathcal{F}^0_t$.
\end{assumption}

\begin{remark} \label{rem-filt} 
    For instance, if we denote by $(\mathcal F^{\beta}_t)_{t\leq T}$ the filtration generated by the process $\beta$, then the filtration $\mathbb F$ defined by $\mathcal F_t := \mathcal F^{\beta}_t \vee \mathcal F^0_t $ for all $t\leq T$, satisfies Assumption~\ref{ass:filtration}. 
\end{remark}

\begin{remark} 
We illustrate Assumption~\ref{ass:filtration} for the example of the portfolio liquidation game in Section~\ref{S:marketimpact}. Suppose that for each agent $i \in \mathbb{N}$ the initial inventory is given by $x_0 \in \mathbb{R}$ and their trading signal is given by $N^i = B^i + S$ with idiosyncratic independent Brownian motions $(B^i)_{i \geq 1}$ which are also independent of a common noise semimartingale $S$. In the mean-field setup, $(B^i)_{i \geq 1}$ changes to a Brownian motion $\wt B$ independent of $S$. With this configuration, we have $\beta^0_t = \mathbb E_t \left[S_t - S_T\right] $, and since $\mathbb E_t [\wt B_t-\wt B_T  ] = 0$, $\beta_t = 2(\varrho + \phi (T-t)) x_0$, which is a special case of Remark \ref{rem-filt}. In particular, we have $b_t =   \mathbb E_t \left[S_t - S_T\right] +2(\varrho + \phi (T-t)) x_0$ for all $t \in [0,T]$.
\end{remark}

In addition to the set $\mathcal{U}$ introduced in~\eqref{def:admissset} we also define the subset of processes
\begin{equation} \label{def:admis_generic}
 \mathcal U^0:=\left\{ v: \,  \mathbb{F}^0 \mbox{-progressively measurable s.t. } \int_0^T \mathbb E[v_s^2] ds <\infty \right\} \subset \mathcal{U}.
\end{equation}
    
The paradigm of the mean-field game limit of the finite-player game from Section~\ref{sec:FinitePlayer} is the following: First, we consider a generic player who seeks to implement a strategy $v \in \mathcal U$ in order to maximize for a fixed $\mu \in \mathcal U^0$ the objective function
\begin{equation}\label{eq:J_generic}
   \begin{aligned} 
    J(v; \mu) := & \, \E\left[ -\langle \mu,\boldsymbol A_1  \mu \rangle_{L^2} - \langle v,\boldsymbol A_2 v \rangle_{L^2} - \langle v, (\boldsymbol{A}_3 + \boldsymbol{A}^*_4)  \mu \rangle_{L^2} \right. \\
    & \quad \left. + \langle b, v \rangle_{L^2} + \langle b^0, \mu \rangle_{L^2} + c\right],
\end{aligned}
\end{equation}  
which is akin to~\eqref{eq:J^i} with same operators $\boldsymbol A_1, \boldsymbol A_2, \boldsymbol A_3, \boldsymbol{A}_4$, as well as $\mathbb{F}$-progressively measurable process $b^0$ as introduced above in Section~\ref{sec:FinitePlayer}. Then, we determine the $\mathbb F^0$-measurable process $\mu$ such that a mean-field game equilibrium with common noise $\beta^0$ is obtained in the following sense. 
  
\begin{definition} \label{def:meanfieldgame}
A pair $(\hat v, \hat \mu) \in \mathcal U \times \mathcal U^{0}$ is called a mean-field game equilibrium if the control $\hat{v}$ solves the optimization problem 
\begin{equation} \label{eq:meanfield_opt}
{\underset{v \in \mathcal U}{\text{maximize}} \; J(v ;\hat{\mu})}
\end{equation}
under the consistency condition 
\begin{align}\label{eq:meanfield_cons} 
\E[ \hat{v}_t | \mathcal F^{0}_T ] = \hat{\mu}_t, \quad \Omega \times [0,T] \text{ almost everywhere.}
\end{align}
\end{definition}

\begin{remark}
Note that due to Assumption~\ref{ass:filtration}, it holds in~\eqref{eq:meanfield_cons} that
\begin{align*}
\E[ \hat{v}_t  | \mathcal F^{0}_T ] = \E [\hat{v}_t | \mathcal F^{0}_t ]
\end{align*}
for all $t \in [0,T]$. Indeed, for all $\xi \in \mathcal{F}^0_T$ it follows that
\begin{equation*}
    \mathbb{E}[\xi \, \E[ \hat{v}_t | \mathcal F^{0}_t ]] = \mathbb{E}[ \E[\xi | \mathcal F^{0}_t ] \, \E[ \hat{v}_t | \mathcal F^{0}_t] ] = \mathbb{E}[ \mathbb{E}[\xi \hat{v}_t \vert \mathcal{F}_t^0]] = \mathbb{E}[\xi \hat{v}_t ],
\end{equation*}
where the second equality is obtained from the conditional independence given by Assumption~\ref{ass:filtration}.
\end{remark}

To state the mean-field game equilibrium, it is convenient to introduce the following two solution maps $F$ and $G$ of two associated Fredholm equations:
\begin{equation} \label{eq:solmapFG_mfg} 
\begin{aligned}
F(t, x) := & \left( (\id - \boldsymbol{\tilde{B}})^{-1} \tilde{a}(x) \right) (t) \\
G(t, x) := & \left( (\id - \boldsymbol{\hat{B}})^{-1} \hat{a}(x) \right) (t) 
\end{aligned}
\qquad \qquad (0 \leq t \leq T),
\end{equation}
where 
\begin{equation} \label{eq:solmapFG_mfg_atilde}
\begin{aligned} 
\tilde{a}_t(x) & := \frac 1{2\lambda} \left(x_t - \langle  1_{\{t \leq \cdot\}} \hat{A}_2(\cdot, t), \boldsymbol{\tilde{D}}_t^{-1} 1_{\{t \leq \cdot\}} \E_t[x_{\cdot}]  \rangle_{L^2}  \right), \\
\tilde{B}(t,s) & :=  1_{\{s \leq t\}} \frac 1{2\lambda} \left( \langle 1_{\{t \leq \cdot\}} \hat{A}_2(\cdot,t) ,  \boldsymbol{\tilde{D}}_t^{-1} 1_{\{t \leq \cdot\}}  \hat{A}_2(\cdot,s) \rangle_{L^2} - \hat{A}_2(t,s) \right), \\
\boldsymbol{\tilde{D}}_t & := 2\lambda \id + (\boldsymbol{\hat{A}}_2)_t  + (\boldsymbol{\hat{A}}^*_2)_t; 
\end{aligned}
\end{equation}
and 
\begin{equation} \label{eq:solmapFG_mfg_ahat}
\begin{aligned} 
\hat{a}_t(x) & := \frac 1{2\lambda} \left(x_t - \langle  1_{\{t \leq \cdot\}} ( \hat{A}_2(\cdot,t) + A_4(\cdot,t) ), \boldsymbol{\hat{D}}_t^{-1} 1_{\{t \leq \cdot\}} \E_t[x_{\cdot}]  \rangle_{L^2}  \right), \\
\hat{B}(t,s) := & \, 1_{\{s \leq t\}} \frac 1{2\lambda} \Big( \langle 1_{\{t \leq \cdot\}} ( \hat{A}_2(\cdot,t) + A_4(\cdot,t)) ,  \boldsymbol{\hat{D}}_t^{-1}  1_{\{t \leq \cdot\}}   (\hat{A}_2(\cdot,s)  + {A}_3(\cdot,s))\rangle_{L^2} \Big. \\
& \hspace{54pt} \Big. - (\hat {A}_2(t,s)  + {A}_3(t,s)) \Big),\\
\boldsymbol{\hat{D}}_t & := 2\lambda \id +  (\boldsymbol{\hat{A}}_2)_t  + (\boldsymbol{\hat{A}}^*_2)_t + (\boldsymbol{A}_3)_t + (\boldsymbol{A}^*_4)_t.
\end{aligned}
\end{equation}
We are now ready to provide the solution to the mean-field game:  

\begin{theorem} \label{thm:meanfieldgame}
Assume that \eqref{ass:P} as well as Assumptions~\ref{assum-op} and~\ref{ass:filtration} are satisfied.
Then, the unique mean-field game equilibrium $(\hat v,\hat \mu) \in \mathcal U \times \mathcal U^{0}$ in the sense of Definition~\ref{def:meanfieldgame} is given by
\begin{align}
 \hat v_t & = F( t,    b -  \boldsymbol{A}_3(\hat \mu) - \boldsymbol{A}_4^*( \mathbb{E}_\cdot \hat \mu)), \label{eq:mfgv} \\ 
 \hat{\mu}_t &= G(t, \mathbb{E}[\beta] + \beta^0) \label{eq:mfgnu}, 
 \end{align}
for all $t \in [0,T]$, where $F$ and $G$ are defined in~\eqref{eq:solmapFG_mfg}. 
\end{theorem}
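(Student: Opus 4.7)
The plan is to carry out the standard mean-field game fixed-point scheme: first solve the generic player's optimization problem for a frozen common process $\mu$, and then impose the consistency condition. The linear-quadratic structure combined with Proposition~\ref{L:FredholmConditional} will make both steps explicit.

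First, I fix $\mu \in \mathcal{U}^0$ and exploit strict concavity of $v \mapsto J(v;\mu)$, which follows from Assumption~\ref{assum-op} (in particular $\lambda>0$ and Remark~\ref{rem:opNonNeg}); the FOC is then both necessary and sufficient. Computing $\partial_v J(v;\mu)[w]$ for an arbitrary direction $w \in \mathcal U$, using that $w_t$ is $\mathcal F_t$-measurable together with Fubini to pass $\E_t$ inside the $L^2$-pairing, I will obtain the stochastic Fredholm equation
\begin{align*}
    2\lambda v_t + (\boldsymbol{\hat A}_2 v)(t) + \E_t[(\boldsymbol{\hat A}_2^* v)(t)] = \E_t\bigl[b_t - (\boldsymbol{A}_3 \mu)(t) - (\boldsymbol{A}_3^* \mu)(t)\bigr].
\end{align*}
This is precisely of the form treated by Proposition~\ref{L:FredholmConditional} with forcing $x := b - \boldsymbol{A}_3\mu - \boldsymbol{A}_3^* \mu$ and kernels proportional to $\hat A_2$; its explicit solution is $\hat v = F(\cdot, x)$, where $F$, $\boldsymbol{\tilde D}_t$, $\tilde a$, $\tilde B$ are as in \eqref{eq:solmapFG_mfg}--\eqref{eq:solmapFG_mfg_atilde}. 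This yields \eqref{eq:mfgv} once $\hat\mu$ is identified.

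Next I impose the consistency condition $\hat\mu_t = \E[\hat v_t | \mathcal F_T^0]$ by projecting the FOC onto $\mathcal F_T^0$. The forward integral passes through via Fubini and consistency as $\E[(\boldsymbol{\hat A}_2 \hat v)(t)|\mathcal F_T^0] = (\boldsymbol{\hat A}_2 \hat\mu)(t)$. For the backward integral I invoke Assumption~\ref{ass:filtration}, which implies that $\E[X|\mathcal F_T^0] = \E[X|\mathcal F_t^0]$ for any $\mathcal F_t$-measurable $X$; combined with the tower property this produces $\E[\E_t[(\boldsymbol{\hat A}_2^* \hat v)(t)] | \mathcal F_T^0] = \int_t^T \hat A_2(s,t)\E[\hat\mu_s|\mathcal F_t^0]\,ds$, and analogously for the $\boldsymbol{A}_3^*$ term. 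Finally, independence of $\beta$ and $\beta^0$ gives $\E[b_t|\mathcal F_T^0] = \E[\beta_t] + \beta_t^0$. Collecting everything produces the Fredholm equation
\begin{align*}
    2\lambda \hat\mu_t + ((\boldsymbol{\hat A}_2+\boldsymbol{A}_3)\hat\mu)(t) + \int_t^T (\hat A_2 + A_3)(s,t)\,\E[\hat\mu_s|\mathcal F_t^0]\, ds = \E[\beta_t] + \beta_t^0,
\end{align*}
now posed in the $\mathbb F^0$-filtration and involving the combined operator $\boldsymbol{\hat D}_t$. Proposition~\ref{L:FredholmConditional} delivers the unique solution $\hat\mu = G(\cdot, \E[\beta]+\beta^0)$, proving \eqref{eq:mfgnu}.

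Finally, substituting this $\hat\mu$ back into the formula of the first step defines $\hat v \in \mathcal U$, which maximizes $J(\cdot;\hat\mu)$ by strict concavity, while $\E[\hat v|\mathcal F_T^0] = \hat\mu$ holds by construction, so $(\hat v,\hat\mu)$ is a mean-field game equilibrium. Uniqueness is then automatic: strict concavity forces any equilibrium $\hat v$ to coincide with $F(\cdot, b - \boldsymbol A_3\hat\mu - \boldsymbol A_3^*\hat\mu)$, and the resulting Fredholm equation for $\hat\mu$ has a unique solution by Proposition~\ref{L:FredholmConditional}. The main technical obstacle lies in the third step: carefully commuting $\E[\cdot|\mathcal F_T^0]$ with both the forward and backward Volterra integrals in the FOC and exploiting Assumption~\ref{ass:filtration} to reduce the relevant projections to $\mathcal F_t^0$-conditional expectations, so that the resulting fixed-point equation for $\hat\mu$ fits within the scope of Proposition~\ref{L:FredholmConditional}.
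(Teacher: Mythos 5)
Your proposal is correct and follows essentially the same route as the paper's proof: strict concavity plus the first-order condition yields a stochastic Fredholm equation for $\hat v$ given $\mu$, projecting onto $\mathcal F_T^0$ (via Assumption~\ref{ass:filtration} and the independence of $\beta$ and $\beta^0$) produces the Fredholm equation for $\hat\mu$, and Proposition~\ref{L:FredholmConditional} solves both, with uniqueness of the Fredholm solutions giving both the consistency condition and uniqueness of the equilibrium. The only gloss is your phrase ``holds by construction'' for the consistency condition: strictly speaking one observes that $\E[\hat v_\cdot\,|\,\mathcal F_T^0]$ satisfies the same Fredholm equation as $\hat\mu$ and then invokes uniqueness, which is exactly what the paper does.
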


\begin{remark}
To be more explicit, note that the process which appears on the right-hand side of \eqref{eq:mfgv} is 
\begin{equation*}
    b_s -  \boldsymbol{A}_3(\hat \mu)(s) - \boldsymbol{A}_4^*( \mathbb{E}_s \hat \mu)(s) =  b_s -\int_0^s A_3(s,r)\hat \mu_r dr - \int_s^T A_4(r,s)\mathbb{E}_s [\hat \mu_r] dr , \quad s \leq T. 
\end{equation*}
\end{remark}

The proof of Theorem~\ref{thm:meanfieldgame} is given in Section~\ref{sec-proof-infinite}. We note that Remarks~\ref{R:num} and~\ref{R:Riccati} remain valid for the expressions~\eqref{eq:mfgv} and~\eqref{eq:mfgnu}.

\subsection{The Infinite-Player Game Formulation} \label{subsec:infinteplayer}

It is also very natural to formulate the mean-field game {limit} of the finite-player game introduced in Section~\ref{sec:FinitePlayer} as an infinite-player game when the number of players~$N$ is sent to infinity. We show below that the resulting infinite-player mean-field game Nash equilibrium is equivalent to a generic player's mean-field game equilibrium derived above.
{While there is some overlap between the result of Theorem~\ref{thm:meanfieldgame} and some of the results in this section, we have decided to include the infinite-player game formulation, as it is a more natural framework to study convergence of $N$-player games as $N\rr \infty$, which is done in Section~\ref{subsec:convergence}. Moreover, this approach is often used in applications, particularly in price impact games, where the agents are not necessarily identical (see, e.g., \cite{CasgrainJaimungal:20, DrapeauLuoSchiedXiong:19, fu2020, FuGraeweHorstPopier:20,neuman2021trading}). }

Specifically, consider a countable collection of controls $(v^i)_{i \in \mathbb{N}} \subset \mathcal{U}$ associated with infinitely many agents indexed by $i \in \mathbb{N}$. Moreover, we let $\nu \in \mathcal{U}$ denote a stochastic process which represents the limit of the control processes' averages. Then, the infinite-player game version of each player $i$'s objective from~\eqref{eq:J^i} is given by
\be\label{eq:J_inf}
\begin{aligned}
    J^{i,\infty}(v^i; \nu) = & \, \E\left[ -\langle \nu,\boldsymbol A_1  \nu \rangle_{L^2} - \langle v^i,\boldsymbol A_2 v^i \rangle_{L^2} - \langle v^i, (\boldsymbol{A}_3 + \boldsymbol{A}^*_4) \nu \rangle_{L^2} \right. \\
    & \quad \left. + \langle b^i, v^i \rangle_{L^2} + \langle b^0, \nu \rangle_{L^2} + c^i \right],
\end{aligned}
\ee
with same operators $\boldsymbol A_1, \boldsymbol A_2, \boldsymbol A_3, \boldsymbol{A}_4$, as well as $\mathbb{F}$-progressively measurable processes $b^0, (b^i)_{i \in \mathbb{N}}$ and $\mathcal{F}_T$-measurable $(c^i)_{i \in \mathbb{N}}$ as introduced above in Section~\ref{sec:FinitePlayer}.

The notion of a Nash equilibrium in this infinite-player mean-field game is to first solve for a fixed $\nu \in \mathcal{U}$ simultaneously for each player $i \in \mathbb{N}$ the optimal stochastic control problem        

\begin{equation}
{\underset{v^{i} \in \mathcal{U}}{\text{maximize}} \; J^{i,\infty}(v^{i};\nu) } \qquad (i \in \mathbb{N}),
\end{equation}
and then to determine the process $\nu$ in equilibrium in the following sense.

\begin{definition} \label{def:Nash_Inf}
A collection of processes $( (\hat{v}^i)_{i \in \mathbb{N}},\hat{\nu}) \subset \mathcal{U}$ is called a mean-field game Nash equilibrium with mean-field strategy $\hat{\nu}$ if for all $i \in \mathbb{N}$ the control $\hat{v}^i$ solves the optimization problem 
\begin{equation} \label{def:Nash_Inf_opt}
{\underset{v^{i} \in \mathcal{U}}{\text{maximize}} \; J^{i,\infty}(v^{i};\hat{\nu}) } \qquad (i \in \mathbb{N})
\end{equation}
and $\hat{\nu}$ satisfies the consistency condition
\begin{equation} \label{def:consistency_MFG}
\lim_{N \rightarrow \infty} \sup_{0\leq t\leq T}\E \left[ \left(\frac{1}{N} \sum_{i=1}^N  \hat{v}^i_t - \hat{\nu}_t \right)^2\right] = 0.
\end{equation}
\end{definition}

We make the following assumption on the players' individual processes $(b^i)_{i \in \mathbb{N}}$ in~\eqref{eq:J_inf}:

\begin{assumption} \label{ass:inf_Player} 
We assume that there exists a unique $\mathbb{F}$-progressively measurable process $b^{\infty} \in L^2(\Omega \times [0,T])$ and a bounded function $h:\re_+ \mapsto \re_+$ with $h(x) \rr 0$ as $x \rr \infty$ such that
\begin{equation} \label{ass:infPlayer_b:eq}
  \sup_{0\leq t\leq T}\E \left[ \left(\frac{1}{N} \sum_{i=1}^N  b^i_t -b^{\infty}_t \right)^2\right] \leq  h(N), \quad \textrm{for all } N \geq 1. 
\end{equation}
\end{assumption}

\begin{remark} 
The general Assumption~\ref{ass:inf_Player} can be made precise in specific examples. For instance, consider the portfolio liquidation price impact game from Section~\ref{S:marketimpact} above. Suppose that for each agent $i \in \mathbb{N}$ the initial inventory is given by $x^i_0 \in \mathbb{R}$ and their trading signal is given by $N^i = B^i + B$ with idiosyncratic independent Brownian motions $(B^i)_{i \geq 1}$ which are also independent of a common noise Brownian motion $B$. With this configuration, we have 
\begin{equation}
b^i_t = 2(\varrho + \phi (T-t)) x^i_0 + \mathbb E_t\left[B^i_t-B^i_T \right] + \mathbb E_t \left[B_t - B_T\right] =   2(\varrho + \phi (T-t)) x^i_0, \qquad t \leq T.
\end{equation}
Therefore, the condition in~\eqref{ass:infPlayer_b:eq} boils down to requiring that the limit $x^\infty \in \mathbb{R}$ of the averages of all initial positions $(x^i_0)_{i \in \mathbb{N}}$ exists, i.e.,  
\begin{equation}
x^\infty = \lim_{N \rightarrow \infty} \frac{1}{N} \sum_{i=1}^N x^i_0.
\end{equation}
Moreover, the process $b^\infty$ in~\eqref{ass:infPlayer_b:eq} is then given by 
\begin{equation}
b^\infty_t = 2(\varrho + \phi (T-t)) x^\infty,  \qquad  t \leq T.
\end{equation} 
\end{remark}

Akin to Theorem~\ref{thm:meanfieldgame}, the solution to the infinite-player mean-field game is characterized as follows:

\begin{theorem} \label{thm:meanfieldgame_inf}
Assume that \eqref{ass:P} as well as Assumptions~\ref{assum-op} and~\ref{ass:inf_Player} are satisfied.
Then, the unique solution $((\hat{v}^i)_{i \in \mathbb{N}},\hat{\nu}) \subset \mathcal{U}$ of the infinite-player mean-field game in the sense of Definition~\ref{def:Nash_Inf} is given by
\begin{align}
 \hat v^i_t &= F(t,  b^i - \boldsymbol{A}_3 (\hat \nu) - \boldsymbol{A}_4^* (\mathbb{E}_\cdot \hat \nu) ), \label{eq:mfg_inf_v} \\ \hat{\nu}_t &= G(t, b^\infty), \label{eq:mfg_inf_nu} 
\end{align}
for all $t \in [0,T]$, where $F$ and $G$ are defined in~\eqref{eq:solmapFG_mfg}.  
\end{theorem}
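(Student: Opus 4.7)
The plan is to reduce Theorem~\ref{thm:meanfieldgame_inf} to the single-generic-player result Theorem~\ref{thm:meanfieldgame}, combined with the stability estimate in Proposition~\ref{L:Fredholm_conv} and the averaging hypothesis in Assumption~\ref{ass:inf_Player}. First, I fix any candidate mean-field process $\nu \in \mathcal U$. For each player $i \in \mathbb N$, the optimization~\eqref{def:Nash_Inf_opt} coincides in structure with the generic-player optimization~\eqref{eq:meanfield_opt}, with $b^i$ in place of $b$ and $\nu$ in place of $\hat \mu$. Since the first-order condition is a linear stochastic Fredholm equation whose unique solution is delivered by Proposition~\ref{L:FredholmConditional}, the argument used for the optimization part of Theorem~\ref{thm:meanfieldgame} gives the unique maximizer $\hat v^i_t = F(t,\, b^i - \boldsymbol{A}_3(\nu) - \boldsymbol{A}_3^*(\E_\cdot \nu))$.

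Next, I would derive the fixed-point equation that $\hat \nu$ must satisfy. The key observation is that $F(t,\cdot)$ is linear in its second argument, since it is obtained by composing the linear map $x \mapsto \tilde a(x)$ with the bounded linear resolvent $(\id - \boldsymbol{\tilde B})^{-1}$. Averaging the optimizers therefore yields
\[
\frac{1}{N} \sum_{i=1}^N \hat v^i_t \; = \; F\!\left(t,\ \frac{1}{N}\sum_{i=1}^N b^i \;-\; \boldsymbol{A}_3(\nu) \;-\; \boldsymbol{A}_3^*(\E_\cdot \nu)\right).
\]
By Assumption~\ref{ass:inf_Player}, $\frac{1}{N}\sum_i b^i \to b^\infty$ in the $\sup_{t\leq T} \E[(\cdot)^2]^{1/2}$-norm, and a Lipschitz estimate for $F$ in the same norm (provided by Proposition~\ref{L:Fredholm_conv} applied to $\boldsymbol{\tilde B}$ and $\tilde a$) lets the right-hand side converge to $F(t,\, b^\infty - \boldsymbol{A}_3(\nu) - \boldsymbol{A}_3^*(\E_\cdot \nu))$ in the same sense. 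The consistency requirement~\eqref{def:consistency_MFG} then forces
\[
\hat \nu_t \;=\; F\!\left(t,\ b^\infty - \boldsymbol{A}_3(\hat \nu) - \boldsymbol{A}_3^*(\E_\cdot \hat \nu)\right).
\]
This is precisely the fixed-point equation that characterizes $\hat \mu$ in Theorem~\ref{thm:meanfieldgame}, with $b^\infty$ playing the role of $\E[\beta] + \beta^0$. Re-running the argument from that theorem reassembles it into a single stochastic Fredholm equation whose coefficients aggregate into $\boldsymbol{\hat D}_t$ and $\boldsymbol{\hat B}$, and a further application of Proposition~\ref{L:FredholmConditional} delivers the unique solution $\hat \nu_t = G(t, b^\infty)$. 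Plugging this back into the earlier expression for $\hat v^i$ gives~\eqref{eq:mfg_inf_v}.

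The main obstacle is the quantitative stability step: the solution map $x \mapsto F(\cdot, x)$ must be Lipschitz in the $\sup_{t\leq T}\E[(\cdot)^2]^{1/2}$-norm, so that the convergence in Assumption~\ref{ass:inf_Player} transfers to convergence of $\frac{1}{N}\sum_i \hat v^i_t$ uniformly in $t$. This should follow from the uniform-in-$t$ boundedness of $\boldsymbol{\tilde D}_t^{-1}$ and $(\id - \boldsymbol{\tilde B})^{-1}$ on $L^2([0,T])$ under Assumption~\ref{assum-op}, together with the tower property to absorb the conditional expectation $\E_t$ appearing in $\tilde a$; this is the essence of Proposition~\ref{L:Fredholm_conv}. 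Uniqueness of $\hat \nu$ then follows because any candidate must solve the same fixed-point equation, and invertibility of $(\id - \boldsymbol{\hat B})$ forces it to coincide with $G(\cdot, b^\infty)$; uniqueness of each $\hat v^i$ is then immediate from the strict concavity of the individual optimization once $\hat \nu$ is fixed.
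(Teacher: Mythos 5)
Your proposal is correct and follows essentially the same route as the paper: derive each player's best response for fixed $\nu$ via Proposition~\ref{L:FredholmConditional}, average over players (you do this through linearity of the solution map $F$, the paper by averaging the first-order-condition Fredholm equations), invoke Assumption~\ref{ass:inf_Player} together with the stability result Proposition~\ref{L:Fredholm_conv} to identify the fixed-point equation for $\hat\nu$ and verify the consistency condition, and obtain uniqueness from uniqueness of the underlying Fredholm equations. Note that your sign $-\boldsymbol{A}_3^*(\E_\cdot\hat\nu)$ matches the paper's proof (equation~\eqref{vi_FOC2}) and Theorem~\ref{thm:meanfieldgame}; the ``$+$'' in the displayed statement~\eqref{eq:mfg_inf_v} appears to be a typo.
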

The proof of Theorem~\ref{thm:meanfieldgame_inf} is deferred to Section~\ref{sec-proof-infinite}.

In light of  Theorems~\ref{thm:meanfieldgame} and~\ref{thm:meanfieldgame_inf}, we readily obtain the following connection between the mean-field game and the infinite-player game:

\begin{remark}
Suppose it holds that
\begin{equation*}
    b^i = \beta^i + \beta^0, \qquad b^\infty =  \mathbb E [\beta] + \beta^0,
\end{equation*}
where $(\beta^i)_{i \in \mathbb{N}}$ are independent copies of the process $\beta$. Then, the generic player's mean-field game equilibrium strategy given by Theorem \ref{thm:meanfieldgame} coincides with the $i$-th player's strategy in the infinite-player mean-field game Nash equilibrium given by Theorem \ref{thm:meanfieldgame_inf}.
\end{remark}

\begin{remark}
The Nash equilibrium of the infinite-player mean-field game in Theorem~\ref{thm:meanfieldgame_inf} is comparable with the Nash equilibrium of the finite-player game presented in Theorem~\ref{thm:main-finite}. The main difference is that the associated Fredholm equations and hence their solutions in~\eqref{eq:solmapFG_mfg},~\eqref{eq:solmapFG_mfg_atilde} and~\eqref{eq:solmapFG_mfg_ahat} become considerably simpler in the infinite-player mean-field limit compared to their counterparts in the finite-player game in~\eqref{eq:opt_ubar},~\eqref{eq:FHubarCoeff},~\eqref{eq:opt_ui} and~\eqref{eq:FHuiCoeff}.  
\end{remark}

\subsection{Convergence and Approximation Results}
\label{subsec:convergence}

The connection between the finite-player Nash equilibrium from Section~\ref{sec:FinitePlayer} and the infinite-player Nash equilibrium from Section~\ref{subsec:infinteplayer} is established in the next convergence theorem.

\begin{theorem} \label{thm:convergence} 
Assume that \eqref{ass:P} as well as Assumptions~\ref{assum-op} and~\ref{ass:inf_Player} are satisfied. For any $N\geq2$, let $(\hat{u}^{i,N})_{i=1,\ldots,N} \subset \mathcal{U}$ denote the Nash equilibrium strategies of the $N$-player game given in~\eqref{eq:opt_ui}. Moreover, let $((\hat{v}^i)_{i \in \mathbb{N}}, \hat{\nu}) \subset \mathcal{U}$ denote the Nash equilibrium of the infinite-player game given in~\eqref{eq:mfg_inf_v} and~\eqref{eq:mfg_inf_nu}. Then, there exists a constant $C>0$ such that
\begin{equation}\label{eq:conv_average}  
\begin{aligned}
 \sup_{0 \leq s \leq T} \mathbb{E} \left[ \left( \hat{\nu}_s - \frac{1}{N} \sum_{i=1}^N \hat{u}^{i,N}_s \right)^2 \right] \leq C  \left(\frac{1}{N^2} \vee h(N) \right), \quad \textrm{for all } N \geq 1, 
\end{aligned}
\end{equation}
as well as 
\begin{equation} \label{eq:conv_policy}  
\begin{aligned}
\sup_{i=1,...,N}\sup_{0 \leq s \leq T} \mathbb{E} \left[ \left( \hat{u}^{i,N}_s - \hat{v}^{i}_s \right)^2 \right] \leq C  \left(\frac{1}{N^2} \vee h(N) \right),\quad \textrm{for all } N \geq 1, 
\end{aligned}
\end{equation}
where $h$ is given in~\eqref{ass:infPlayer_b:eq}.
\end{theorem}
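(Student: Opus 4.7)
The plan is to view both~\eqref{eq:conv_average} and~\eqref{eq:conv_policy} as direct consequences of the stability estimate for stochastic Fredholm equations of Proposition~\ref{L:Fredholm_conv}, applied to suitably matched pairs of Fredholm equations associated with the finite-$N$ and mean-field equilibria. Crucially, the explicit formulas derived in Theorems~\ref{thm:opt_ubar}, \ref{thm:main-finite} and~\ref{thm:meanfieldgame_inf} let one read off the kernels, operators and driving processes of these Fredholm equations without any further calculation.

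\textbf{Step 1: averaged convergence.} First I would recognise that, by Theorem~\ref{thm:opt_ubar} together with~\eqref{eq:FHubarCoeff}--\eqref{eq:barDt}, the average $\bar u^N := N^{-1}\sum_{i=1}^N \hat u^{i,N}$ solves a Fredholm equation of the form~\eqref{eq:introVolterra}; expanding the coefficients via~\eqref{def:operator} yields
\begin{equation*}
\ol K(t,s) = \hat A_2(t,s) + A_3(t,s) + \tfrac{1}{N}\bigl(A_1(t,s) + A_3(t,s)\bigr),
\end{equation*}
together with $\ol{\boldsymbol D}_t = \hat{\boldsymbol D}_t + O(1/N)$ in the operator norms of Definition~\ref{def-ker-admis}, where $\hat{\boldsymbol D}_t$ is as in~\eqref{eq:solmapFG_mfg_ahat}. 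On the other hand, Theorem~\ref{thm:meanfieldgame_inf} together with~\eqref{eq:mfg_inf_nu} identifies $\hat\nu$ as the solution of the limiting Fredholm equation with kernel $\hat A_2 + A_3$, operator $\hat{\boldsymbol D}_t$ and driving process $b^\infty$. Next, combining Assumption~\ref{ass:inf_Player} with~\eqref{ass:P} gives
\begin{equation*}
\sup_{0\leq t\leq T} \mathbb E\Bigl[\bigl(\bar b^N_t + \tfrac{1}{N}b^0_t - b^\infty_t\bigr)^{2}\Bigr] \leq C\Bigl(h(N) + \tfrac{1}{N^2}\Bigr).
\end{equation*}
Plugging the $O(1/N)$ kernel and operator differences together with this input bound into Proposition~\ref{L:Fredholm_conv} would then yield~\eqref{eq:conv_average}.

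\textbf{Step 2: individual convergence.} I would then apply the same strategy at the individual level. By Theorem~\ref{thm:main-finite} and~\eqref{eq:FHuiCoeff}, $\hat u^{i,N}$ solves a Fredholm equation with kernel $\hat K = \hat A_2 + A_3/N$, operator $\wt{\boldsymbol D}_t = \tilde{\boldsymbol D}_t + (\boldsymbol A_3 + \boldsymbol A_3^*)_t/N$, and input $\mathbb E_\cdot[b^i + b^0/N - (\boldsymbol H + \boldsymbol H^*)\bar u^N]$, while Theorem~\ref{thm:meanfieldgame_inf} combined with~\eqref{eq:solmapFG_mfg_atilde} identifies $\hat v^i$ as the solution of the analogous Fredholm equation with kernel $\hat A_2$, operator $\tilde{\boldsymbol D}_t$ and input $b^i - \boldsymbol A_3\hat\nu - \boldsymbol A_3^*(\mathbb E_\cdot\hat\nu)$. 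The kernel and operator differences are again $O(1/N)$, and the input difference decomposes into an $O(1/N)$ contribution coming from $b^0/N$ and $(\boldsymbol A_1 + \boldsymbol A_1^*)\bar u^N/N$, plus a coupling term $(\boldsymbol A_3 + \boldsymbol A_3^*)(\bar u^N - \hat\nu)$ whose $L^2(\Omega\times[0,T])$-norm is already controlled by Step~1 via~\eqref{eq:conv_average}. A second application of Proposition~\ref{L:Fredholm_conv} then delivers~\eqref{eq:conv_policy}.

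\textbf{Main obstacle.} The principal difficulty is the cascading dependence of Step~2 on Step~1: the driving term of the individual Fredholm equation contains the coupling $(\boldsymbol H + \boldsymbol H^*)\bar u^N$, so the individual-level estimate cannot be proved in isolation but must inherit the averaged rate. The supremum-in-time form of the conclusion moreover demands that the stability norm provided by Proposition~\ref{L:Fredholm_conv} dominates $\sup_{t\in[0,T]}\mathbb E[(\cdot)_t^2]$, so that the $O(1/N)$ operator perturbations and $O(h(N)^{1/2})$ input perturbations propagate to the same mode of convergence for the solutions.
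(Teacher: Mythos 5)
Your proposal follows essentially the same route as the paper: identify $\bar u^N$ and $\hat\nu$ (respectively $\hat u^{i,N}$ and $\hat v^i$) as solutions of stochastic Fredholm equations of type~\eqref{v-eq} via the first order conditions, verify that the kernel differences are $O(1/N)$ and the input differences are $O(h(N)\vee N^{-2})$, and conclude by the stability estimate of Proposition~\ref{L:Fredholm_conv}. Your Step~2, including the decomposition of the coupling term $(\boldsymbol H+\boldsymbol H^*)\bar u^N$ into an $O(1/N)$ piece plus $(\boldsymbol A_3+\boldsymbol A_3^*)(\bar u^N-\hat\nu)$ controlled by Step~1, correctly spells out the part the paper dispatches with ``follows the same lines.''
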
 

The proof of Theorem~\ref{thm:convergence} can be found in Section~\ref{sec-proof-infinite}.

In addition, we obtain that the infinite-player Nash equilibrium from Section~\ref{subsec:infinteplayer} provides an approximate Nash equilibrium for the finite-player game from Section~\ref{sec:FinitePlayer} in the sense of Corollary~\ref{thm-eps-nash2} below. To this end, we first recall the definition of an $\eps$-Nash equilibrium.

\begin{definition}
Let $\mathcal U$ denote a class of admissible controls and fix $\eps>0$. A set of controls $\{w^j \in \mathcal U : j =1,\ldots,N\}$ forms an $\eps$-Nash equilibrium with respect to a collection of objective functionals $\{J^j(\cdot,\cdot): j=1,\ldots, N\}$ if it satisfies 
 $$
\sup_{w \in \mathcal U }J^{j}(w; w^{-j}) \leq J^{j}(w^j; w^{-j}) + \eps, \quad \textrm{for all } j=1,\ldots,N.   
$$
\end{definition} 

For any $u\in \mathcal U$ we
introduce the following norm
\be \label{2-t-norm}
\|u\|_{2,T}= \left(\int_0^T \E[u_t^2]dt \right)^{1/2}. 
\ee
In the following we assume that  
\be \label{assum-b-i} 
\sup_{i\in \mathbb{N}} \| b^i \|_{2,T} < \infty. 
\ee

We have following approximation result.

\begin{theorem} \label{thm-eps-nash}
Suppose that \eqref{ass:P}, \eqref{assum-b-i} as well as Assumptions~\ref{assum-op} and~\ref{ass:inf_Player} are satisfied. For any $N \geq 2$, let $J^{i,N}$ be the performance functional of player $i \in \{1,\ldots,N\}$ in the $N$-player game in
\eqref{eq:J^i}. Moreover, let $(\hat v^i)_{i \in \mathbb{N}}$ denote the equilibrium strategies of the mean-field game from Theorem \ref{thm:meanfieldgame_inf} and let $\hat{v}^{-i}=(\hat{v}^{1},\ldots,\hat{v}^{i-1},\hat{v}^{i+1},\ldots,\hat{v}^{N})$ for any $1 \leq i \leq N$. Then, for all $u \in \mathcal U$ there exists a constant $C>0$ independent of $N$ and~$u$ such that for all $i \in \{1, \ldots, N\}$ we have  
\begin{equation*} 
J^{i,N}(u; \hat{v}^{-i}) \leq J^{i,N}(\hat{v}^{i}; \hat{v}^{-i})+ C  (1\vee \|u\|^2_{2,T}) (h(N)^{1/2} \vee N^{-1}). 
\end{equation*}
\end{theorem}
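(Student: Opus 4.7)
The plan is to insert the mean-field objective $J^{i,\infty}$ between the two $N$-player quantities and exploit the mean-field optimality of $\hat v^i$. For any $u\in\mathcal U$, write
\begin{equation*}
J^{i,N}(u;\hat v^{-i}) - J^{i,N}(\hat v^i;\hat v^{-i})
 = \mathrm{I}(u) + \Delta + \mathrm{II},
\end{equation*}
where $\mathrm{I}(u):=J^{i,N}(u;\hat v^{-i}) - J^{i,\infty}(u;\hat\nu)$, $\Delta := J^{i,\infty}(u;\hat\nu) - J^{i,\infty}(\hat v^i;\hat\nu)$ and $\mathrm{II}:= J^{i,\infty}(\hat v^i;\hat\nu) - J^{i,N}(\hat v^i;\hat v^{-i})$. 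Theorem~\ref{thm:meanfieldgame_inf} asserts that $\hat v^i$ solves $\max_{v\in\mathcal U} J^{i,\infty}(v;\hat\nu)$, so $\Delta\le 0$. Thus it is enough to bound $\mathrm{I}(u)$ and $\mathrm{II}$, both of which are instances of the same difference $\mathrm{D}(w) := J^{i,N}(w;\hat v^{-i}) - J^{i,\infty}(w;\hat\nu)$ for $w\in\{u,\hat v^i\}$.

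Setting $\bar v^N:=\tfrac1N\sum_{j=1}^N \hat v^j$ and $\bar w^N:=\tfrac{w-\hat v^i}{N}+\bar v^N$, I would expand
\begin{equation*}
\mathrm{D}(w) = -\E\left[\langle\bar w^N,\boldsymbol A_1\bar w^N\rangle_{L^2}-\langle\hat\nu,\boldsymbol A_1\hat\nu\rangle_{L^2}\right]
-\E\left[\langle w,(\boldsymbol A_3+\boldsymbol A_3^*)(\bar w^N-\hat\nu)\rangle_{L^2}\right]
+\E\left[\langle b^0,\bar w^N-\hat\nu\rangle_{L^2}\right],
\end{equation*}
since the terms involving $\boldsymbol A_2$, $b^i$ and $c^i$ cancel. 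Polarising the first quadratic term as $\langle a-b,\tfrac12(\boldsymbol A_1+\boldsymbol A_1^*)(a+b)\rangle_{L^2}$, everything reduces to controlling $\bar w^N-\hat\nu$ in the norm $\|\cdot\|_{2,T}$, together with obvious linear-in-$w$ factors. The operator norms of $\boldsymbol A_1$ and $\boldsymbol A_3$ are finite by Definition~\ref{def-ker-admis}, and $\|b^0\|_{2,T}$ is finite by~\eqref{ass:P}.

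The main technical step is the estimate
\begin{equation*}
\|\bar v^N-\hat\nu\|_{2,T} \le C\, h(N)^{1/2}.
\end{equation*}
Here I would use the explicit formulas from Theorem~\ref{thm:meanfieldgame_inf}: since $\hat v^j= F(\cdot,b^j-\boldsymbol A_3(\hat\nu)+\boldsymbol A_3^*(\E_\cdot\hat\nu))$ and the solution map $F$ is linear in its source argument, averaging over $j$ yields $\bar v^N = F(\cdot,\bar b^N - \boldsymbol A_3(\hat\nu)+\boldsymbol A_3^*(\E_\cdot\hat\nu))$. The consistency that makes $\hat\nu=G(\cdot,b^\infty)$ a mean-field equilibrium means exactly that the same identity with $\bar b^N$ replaced by $b^\infty$ reproduces $\hat\nu$; subtracting and using the bounded invertibility of $\id-\boldsymbol{\tilde B}$ gives $\bar v^N-\hat\nu = F(\cdot,\bar b^N - b^\infty)$. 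Assumption~\ref{ass:inf_Player} then bounds the right-hand side by $C h(N)^{1/2}$ in the $\|\cdot\|_{2,T}$-norm. Combined with $\|(w-\hat v^i)/N\|_{2,T}\le C(1+\|w\|_{2,T})/N$ and the a priori estimate $\|\hat v^i\|_{2,T}\le C$ obtainable from the explicit form~\eqref{eq:mfg_inf_v} and~\eqref{ass:P}, this gives $\|\bar w^N-\hat\nu\|_{2,T}\le C(1+\|w\|_{2,T})(h(N)^{1/2}\vee N^{-1})$.

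Putting these bounds into the expansion of $\mathrm{D}(w)$ via Cauchy--Schwarz in $L^2(\Omega\times[0,T])$ and the operator norm inequality, the quadratic term is bounded by $C(1+\|w\|_{2,T})^2(h(N)^{1/2}\vee N^{-1})$, the cross term by $C\|w\|_{2,T}(1+\|w\|_{2,T})(h(N)^{1/2}\vee N^{-1})$, and the $b^0$-term by $C(1+\|w\|_{2,T})(h(N)^{1/2}\vee N^{-1})$. Taking $w=u$ gives the $u$-dependent control and $w=\hat v^i$ a purely $N$-dependent term; absorbing the latter and using $(1+\|u\|_{2,T})^2 \le C\,(1\vee\|u\|_{2,T}^2)(1\vee\|u\|_{2,T}^2)$ yields the claimed bound. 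The main obstacle is the identification $\bar v^N-\hat\nu=F(\cdot,\bar b^N-b^\infty)$: it requires verifying that the mean-field fixed-point equation collapses to this clean linear form in the source, which in turn relies on the linearity of $F$ and the specific role of $b^\infty$ in the derivation of $G$ in Theorem~\ref{thm:meanfieldgame_inf}; all other steps are routine Cauchy--Schwarz estimates.
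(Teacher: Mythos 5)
Your proposal is correct and follows essentially the same route as the paper: sandwiching through $J^{i,\infty}$, using the mean-field optimality of $\hat v^i$ for the middle term, and bounding the $N$-versus-$\infty$ discrepancy term by term after the $\boldsymbol A_2$-, $b^i$- and $c^i$-contributions cancel (this is the paper's Lemma~\ref{lem-j-dif}). The only cosmetic difference is that you derive the key estimate $\|\bar v^N-\hat\nu\|_{2,T}\le C\,h(N)^{1/2}$ from linearity of the solution map $F$ applied to $\bar b^N-b^\infty$, whereas the paper obtains it (Lemma~\ref{lemma-con-mf}) by invoking the general stability result of Proposition~\ref{L:Fredholm_conv} for the two Fredholm equations with identical kernels --- which reduces to the same computation.
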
 

The proof of Theorem~\ref{thm-eps-nash} is deferred to Section~\ref{sec-proof-infinite}.

From Theorem~\ref{thm-eps-nash} and Theorem \ref{thm:convergence} it follows that we can define a subclass of admissible strategies $\mathcal U_{b} \subset \mathcal U$ that includes $\{ (\hat{u}^{1,N}, \ldots, \hat{u}^{N,N}) : N \in \mathbb{N} \}$ from Theorem~\ref{thm:main-finite} and $(\hat{v}^i)_{i \in \mathbb{N}}$ from Theorem~\ref{thm:meanfieldgame_inf} such that
$$
\sup_{u \in\mathcal U_{b} }  \|u\|^2_{2,T} <\infty. 
$$
Then the following corollary follows immediately from Theorem \ref{thm-eps-nash}. 

\begin{corollary} [$\eps$-Nash equilibrium] \label{thm-eps-nash2} 
Suppose that \eqref{ass:P} as well as Assumptions~\ref{assum-op} and~\ref{ass:inf_Player} are satisfied. For any $N \geq 2$, let $J^{i,N}$ be the performance functional of player $i \in \{1,\ldots,N\}$ in the $N$-player game in
\eqref{eq:J^i}. Moreover, let $(\hat v^i)_{i \in \mathbb{N}}$ denote the equilibrium strategies of the mean-field game from Theorem \ref{thm:meanfieldgame_inf} and let $\hat{v}^{-i}=(\hat{v}^{1},\ldots,\hat{v}^{i-1},\hat{v}^{i+1},\ldots,\hat{v}^{N})$ for any $1 \leq i \leq N$. Then, for all $u \in \mathcal U_b$ there exists a constant $C>0$ independent of $N$ such that 
$$
J^{i,N}(\hat{v}^{i}; \hat{v}^{-i}) \leq \sup_{u \in \mathcal U_{b}} J^{i,N}(u; \hat{v}^{-i}) \leq J^{i,N}(\hat{v}^{i}; \hat{v}^{-i}) + O\left((h(N)^{1/2} \vee N^{-1}) \right). 
$$
\end{corollary}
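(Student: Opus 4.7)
The plan is to show that the corollary is a direct consequence of the quantitative estimate in Theorem~\ref{thm-eps-nash}, once we exploit the uniform $L^2$-boundedness that is baked into the definition of the subclass $\mathcal{U}_b$. First, since $\mathcal{U}_b \subset \mathcal{U}$, Theorem~\ref{thm-eps-nash} applies to every $u \in \mathcal{U}_b$ and gives the estimate
\[
J^{i,N}(u; \hat{v}^{-i}) \leq J^{i,N}(\hat{v}^{i}; \hat{v}^{-i}) + C \|u\|_{2,T}^2 (1 \vee \|u\|_{2,T}^2)(h(N)^{1/2} \vee N^{-1}),
\]
with a constant $C$ that depends neither on $N$ nor on $u$.

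Next, set $M := \sup_{u \in \mathcal{U}_b} \|u\|_{2,T}^2$, which is finite by the very construction of $\mathcal{U}_b$. Then $\|u\|_{2,T}^2(1 \vee \|u\|_{2,T}^2) \leq M(1 \vee M)$ uniformly in $u \in \mathcal{U}_b$, so taking the supremum over $u \in \mathcal{U}_b$ on both sides yields
\[
\sup_{u \in \mathcal{U}_b} J^{i,N}(u; \hat{v}^{-i}) \leq J^{i,N}(\hat{v}^{i}; \hat{v}^{-i}) + C M(1 \vee M)(h(N)^{1/2} \vee N^{-1}),
\]
and since $C M (1 \vee M)$ is independent of $N$, the error term is indeed $O(h(N)^{1/2} \vee N^{-1})$.

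Finally, because $\hat{v}^i \in \mathcal{U}_b$ by construction of that subclass, the reverse inequality $J^{i,N}(\hat{v}^{i}; \hat{v}^{-i}) \leq \sup_{u \in \mathcal{U}_b} J^{i,N}(u; \hat{v}^{-i})$ is immediate, and combining the two bounds gives the announced sandwich. There is no substantive technical obstacle; the only conceptual point is recognizing that the quadratic dependence on $\|u\|_{2,T}$ in Theorem~\ref{thm-eps-nash} is neutralized by the uniform $L^2$ cap defining $\mathcal{U}_b$, whose non-emptiness and containment of the relevant equilibria $\{\hat{u}^{i,N}\}$ and $(\hat{v}^i)_{i\in\mathbb{N}}$ is itself guaranteed by the convergence estimates in Theorem~\ref{thm:convergence}.
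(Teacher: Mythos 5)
Your proposal is correct and follows exactly the route the paper intends: the corollary is stated as an immediate consequence of Theorem~\ref{thm-eps-nash}, obtained by restricting to $\mathcal{U}_b$ so that the factor $\|u\|^2_{2,T}(1\vee\|u\|^2_{2,T})$ is uniformly bounded and the error term becomes $O\bigl(h(N)^{1/2}\vee N^{-1}\bigr)$ uniformly over the class. The observation that $\hat v^i\in\mathcal{U}_b$ gives the left inequality, and the uniform cap neutralizes the quadratic dependence on $\|u\|_{2,T}$, is precisely the content the authors leave implicit.
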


\section{Stochastic Fredholm equations}\label{sec-fredholm} 

In this section we derive general results on linear stochastic Fredholm equations of the second kind, which appear in the first order condition of the $N$-player game and the mean-field game. Our results on the {semi-explicit} solution and the stability of the solution to this class of equations are central ingredients in the proofs of the main results in Sections \ref{sec:FinitePlayer} and \ref{sec:MeanFieldGame}. All the solutions to the equations considered in this section are referred to as strong solutions. 

Recall that for any $G \in \mathcal G$ we define 
$$
G_t(s,r)=G(s,r)  1_{\{s,r\geq t\}}, 
$$
and we denote by $ {\boldsymbol{G}}_t$ the associated Volterra operator.  

The following proposition derives the solution to stochastic Fredholm equations of the second kind.
 \begin{proposition}\label{L:FredholmConditional}
Let $K,L:[0,T]^2 \to \mathbb R$ be two Volterra kernels in $\mathcal{G}$ such that the operators
\be \label{d-opt} 
 {\boldsymbol{D}}_t = \id  + \boldsymbol{K}_t + \boldsymbol{L^*}_t
\ee
are invertible for all $0\leq t \leq T$ and satisfy
\begin{equation} \label{bounded-dD} 
\sup_{t\leq T} \|\boldsymbol{D}^{-1}_t \|_{\rm{op}}  < \infty. 
\end{equation}
Let $(f_t)_{0 \leq t \leq T}$ be a progressively measurable process such that $\int_0^T \mathbb E[f_t^2] dt<\infty$. Then, the linear stochastic Fredholm equation 
\be \label{v-eq} 
v_t = f_t - \int_0^t K(t,r) v_r dr -\int_t^T L(r,t) \mathbb E_t v_r dr, \quad t\in [0,T],
\ee
admits a unique progressively measurable solution $(v_t)_{0 \leq t \leq T}$ in $\mathcal U$ given by 
\begin{align}
    v_t =   \left((\id - \boldsymbol B)^{-1} a \right) (t), \quad 0\leq t \leq T,  
\end{align}
where
\begin{equation} \label{eq:FHubarCoeff2}
\begin{aligned} 
a_t &=   f_t  -  \langle  1_{\{t< \cdot\}}  L(\cdot,t),  {\boldsymbol D}_t^{-1}1_{\{t< \cdot\}} \E_t f_{\cdot} \rangle_{L^2}\\
B(t,s) &=  1_{\{s<t\}}  \left( \left\langle  1_{\{t< \cdot\}}   L(\cdot,t), {\boldsymbol D}_t^{-1}   1_{\{t\leq \cdot\}}  K(\cdot,s)  \right  \rangle_{L^2}    -   K(t,s)   \right),
\end{aligned}
\end{equation}
and $\boldsymbol B$ is an integral operator induced by the kernel $B$.
\end{proposition}
The proof of Proposition \ref{L:FredholmConditional} is postponed to the end of this section. 
 
We define operator norm as follows, 
\be \label{op-norm} 
 \|\boldsymbol{G}\|_{\rm {op}}= \sup_{f \in L^2([0,T],\R)} \frac{\|\boldsymbol G f\|_{L^2}}{\|f\|_{L^2}},
\ee
for an operator $\boldsymbol{G}$ from $ L^2\left([0,T],{\R}\right)$ to itself.

The following proposition derives a stability result for stochastic Fredholm equations of the second kind. 
 \begin{proposition} \label{L:Fredholm_conv}
Let $K^N,L^N, L, K:[0,T]^2 \to \mathbb R$ be Volterra kernels in $\mathcal{G}$ such that the operators 
\be \label{d-opt-gen} 
 {\boldsymbol{D}}^N_t = \id +   \boldsymbol{K}^N_t + (\boldsymbol{L}_t^N)^* 
\ee
and 
$\boldsymbol{D}_t$ given in \eqref{d-opt}   are invertible for all $0\leq t\leq T$ and satisfy 
\begin{equation} \label{bounded-d} 
\sup_{t\leq T} \|\boldsymbol{D}^{-1}_t \|_{\rm{op}} +\sup_{N\geq 1}  \sup_{t\leq T}  \|(\boldsymbol{D}^{N}_t)^{-1}\|_{\rm{op}}   < \infty. 
\end{equation}
Let $f^N ,f $ be progressively measurable processes on the probability space $(\Omega, \mathcal F,(\mathcal F_t)_{0 \leq t\leq T}, \P)$, with sample paths in  $L^2([0,T],\mathbb R)$, for $N=1,2,\ldots$
Assume further that there exist bounded functions $h_1, h_2: \mathbb{R}_+ \mapsto \mathbb R_+$ such that $\lim_{x \rr \infty} h_i(x) = 0$ and 
 \be \label{ker-con} 
 \begin{aligned} 
\sup_{t\leq T} \int_0^T \big(K^N(t,s) -K(t,s) \big)^2 ds & \leq h_1(N),  \quad \textrm{for all } N \geq 1,\\
\sup_{s\leq T} \int_0^T \big(L^N(t,s) -L(t,s) \big)^2 dt & \leq h_1(N), \quad \textrm{for all } N \geq 1,\\
 \end{aligned} 
 \ee
 and 
 \be \label{f-con} 
\sup_{t\leq T} \E\big[ (f^N_t -f_t)^2\big]  \leq h_2(N). 
\ee
For each $N\geq 1$ let $v^N$ be the   solution to \eqref{v-eq} with $(K^N,L^N,f^N)$. Then there exists a constant $C>0$ such that  
\be 
 \sup_{t\in [0,T]} \E\big[ (v^N_t -v_t)^2\big] \leq C( h_1(N)+ h_2(N) ),  \quad  \textrm{for all } N \geq 1,
\ee
where $v$ is the solution of \eqref{v-eq} with $(K,L,f)$. 
\end{proposition}

\begin{proof} 
Define $d^N_t = v^N_t -v_t$. Then from \eqref{eq:FHubar2} it follows that 
\be \label{diff}
\begin{aligned} 
d^N_t &= a^N_t -a_t + \int_0^t B^N(t,s)v^N_s ds-  \int_0^t B(t,s)v_s ds \\ 
&= a^N_t -a_t + \int_0^t B^N(t,s)d^N_s ds-  \int_0^t (B(t,s) - B^N(t,s))v_s ds, \\ 
\end{aligned}
\ee
where $a$ and $B$ are as in \eqref{eq:FHubarCoeff2} and 

\begin{equation} \label{eq:FHubarCoeff2-N}
\begin{aligned} 
a^N_t &=   f^N_t  -  \langle1_{\{t < \cdot\}}  L^N(\cdot, t),  ({\boldsymbol D}^N_t)^{-1}1_{\{t < \cdot\}} \E_t f^N_{\cdot} \rangle_{L^2}\\
B^N(t,s) &=  1_{\{s<t\}}  \left( \left\langle 1_{\{t \leq \cdot\}} L^N(\cdot, t), ({\boldsymbol D}_t^N)^{-1}   1_{\{t\leq \cdot\}}  K^N(\cdot,s)  \right  \rangle_{L^2}    -   K^N(t,s)   \right).
\end{aligned}
\end{equation}
It follows from \eqref{bounded-d}   and \eqref{ker-con} that 
\be \label{d-opt-con} 
 \sup_{t\leq T} \|{\boldsymbol{D}}^N_t - {\boldsymbol{D}}_t \|_{\textrm{op}}   \leq 2(h_1(N))^{1/2}, \quad \textrm{for all } N\geq 1. 
\ee
We observe that 
\be \label{gam-id} 
 (\boldsymbol {D}_t^N  )^{-1}
- ( \boldsymbol {D}_t  )^{-1}
= (\boldsymbol {D}^N_t  )^{-1}\left( \boldsymbol {D}_t 
-  \boldsymbol {D}^N_t \right)( \boldsymbol {D}_t  )^{-1}.
\ee 
By taking the operator norm on both sides of~\eqref{gam-id} 
and using \eqref{bounded-d}  and~\eqref{d-opt-con} it follows that 
\be \label{D-inv-con} 
\begin{aligned}
   \sup_{t\leq T} \left\|(\boldsymbol {D}_t^N  )^{-1}
- ( \boldsymbol {D}_t^{}  )^{-1}\right\|_{\rm op} 
&\le C \sup_{t\leq T} \| \boldsymbol {D}_t^N -  \boldsymbol {D}_t^{}\|_{\rm op}\\
 & \leq C(h_1(N))^{1/2},\quad \textrm{for all } N\geq 1.
\end{aligned}
\ee
From \eqref{eq:FHubarCoeff2}, \eqref{ker-con}, \eqref{f-con}, \eqref{eq:FHubarCoeff2-N}, \eqref{D-inv-con}, and several applications of Cauchy-Schwartz inequality we get for all $N \geq 1$,
\be \label{f-con2} 
    \sup_{t\leq T} \E\big[ (a^N_t -a_t)^2\big]  \leq C( h_1(N)+ h_2(N)), 
\ee
and 
\be \label{B-con2} 
  \sup_{t\leq T} \int_0^T \big(B^N(t,s) -B(t,s) \big)^2 ds \leq C h_1(N).
\ee
Recall the notation 
$$
 \|v\|_{\mathcal H^2}  = \E \left[ \int_0^T v_s^2 ds \right]. 
$$
By plugging \eqref{f-con2} and \eqref{B-con2} into \eqref{diff} and using Cauchy-Schwarz inequality, we get for all $N$ sufficiently large, 
\be \label{diff2}
\begin{aligned} 
\E[(d^N_t)^2]  
& \leq C \Bigg( \E[(a^N_t -a_t)^2] + \E\left[ \left( \int_0^t B^N(t,s)d^N_s ds \right)^2 \right] \\
& \qquad + \E \left[ \left(   \int_0^t (B^N(t,s) - B(t,s))v_s ds  \right)^2 \right] \Bigg)\\ 
& \leq C \Bigg(  h_1(N)+ h_2(N) + \E \left[ \int_0^t(d^N_s)^2 ds  \right]  \int_0^t(B^N(t,s))^2 ds\\
& \qquad +  \|v_s\|_{\mathcal{H}^2}  \int_0^t (B^N(t,s) - B(t,s))^2 ds \Bigg)\\ 
& \leq C \left( h_1(N)+ h_2(N)  +h_1(N) \E \left[ \int_0^t(d^N_s)^2 ds  \right] \right). 
 \end{aligned}
\ee
Since $C$ above is not depending on $N$ and $t$ we get using Fubini's theorem, 
\be \label{diff3}
 \sup_{s \leq t} \E[(d^N_s)^2]   \leq C \left( h_1(N)+ h_2(N)  +h_1(N)  \int_0^t \sup_{r \leq  s} \E[(d^N_r)^2] ds \right), \quad \textrm{for all } 0\leq t \leq T. 
 \ee
Since $h_i$ are bounded functions, from Gronwall lemma it follows that 
\be \label{diff4}
\sup_{s \in [0, T]} \E[(v^N_s-v_s)^2]  =    \sup_{s \in [0, T]} \E[(d^N_s)^2]   \leq  C(T)  ( h_1(N)+ h_2(N) ), \quad \textrm{for all } N\geq 1.
 \ee
 and this completes the proof. 
\end{proof}

\begin{proof} [Proof of Proposition \ref{L:FredholmConditional}] 
We derive the solution to \eqref{v-eq}. For fixed $t \in [0,T]$, we define the process 
\begin{align*}
  m_t(s) := 1_{\{t\leq s\}} \E_t[v_s], \qquad s \in [0,T].
\end{align*}
Taking conditional expectation $\mathbb{E}_t[\cdot]$ in the linear Volterra equation in~\eqref{v-eq}, multiplying by $1_{\{t \leq s\}}$ and using the tower property of conditional expectation gives us 
\begin{align}\label{eq:mbar2}
  m_t(s) = & \, 1_{\{t\leq s\}} \E_t f_s - 1_{\{t\leq s\}}\int_0^t K(s,r) v_r dr  - 1_{\{t\leq s\}} \int_t^s  K(s,r) \mathbb{E}_t v_r dr \nonumber \\ 
    & \, - 1_{\{t\leq s\}} \int_s^T  L(r,s) \E_t v_r dr \nonumber \\
    = & \, f^{v}_t(s) \,- 1_{\{t\leq s\}} \int_t^s  K(s,r) \mathbb{E}_t v_r dr  - 1_{\{t\leq s\}} \int_s^T  L(r,s) \E_t v_r dr \nonumber \\
     = & \, f^{v}_t(s) \,-  \int_t^s  K_t(s,r)  m_t(r) dr  -  \int_s^T  L^*_t(s,r)  m_t(r) dr \nonumber \\
    = & \, f^{v}_t(s) - \left( \boldsymbol{K}_t + \boldsymbol{L^*}_t   \right)(  m_t)(s), \qquad s \in [0,T], 
\end{align}
where
\begin{equation} \label{f-eq2} 
    f^{v}_t(s) := 1_{\{t\leq s\}} \E_t f_s - 1_{\{t\leq s\}}\int_0^t K(s,r)v_r dr,
\end{equation}
and where we used the definition $K_t(s,r)=K(s,r)\mathds 1_{r\geq t}$. 

Recall that $ {\boldsymbol{D}}_t$ was defined in \eqref{d-opt}.  Using the invertibility assumption on $ {\boldsymbol{D}}_t$ and \eqref{bounded-dD}, we can invert the equation \eqref{eq:mbar2} to get 
\begin{equation} \label{bar-m-sol2}
    m_t(s) = ( {\boldsymbol{D}}_t^{-1} f^{v}_t)(s), \quad s \in [0,T].
\end{equation}
Plugging \eqref{bar-m-sol2} into~\eqref{v-eq} we obtain
\begin{equation} \label{eq:ubar2} 
\begin{aligned} 
  v_t
     =& \,  f_t - \int_0^t  K(t,r)v_r dr   - \int_t^T     L(r,t)   ( {\boldsymbol{D}}_t^{-1} f^{v}_t)(r)dr. 
\end{aligned}
\end{equation}
Next, we focus on the third term on the right-hand side of \eqref{eq:ubar2} and re-express it as a linear functional in $v$. Using \eqref{f-eq2}  we get 
\be \label{dec-g-d} 
\begin{aligned}
   &  \int_t^T L(r,t) ( {\boldsymbol D}_t^{-1}f^{v}_t )(r) dr \\
   &= \langle  1_{\{t \leq \cdot\}} L(\cdot,t), {\boldsymbol D}_t^{-1}f^{ v}_t \rangle_{L^2} \\ 
     &= - \left\langle 1_{\{t \leq \cdot\}} L(\cdot,t), {\boldsymbol D}_t^{-1}   1_{\{t\leq \cdot\}}\int_0^t K(\cdot,r)    v_r dr \right \rangle_{L^2}  +  \langle 1_{\{t \leq \cdot\}}L(\cdot,t),  {\boldsymbol D}_t^{-1}1_{\{t \leq \cdot\}} \E_t f_{\cdot} \rangle_{L^2} \\
 &=- \int_0^t \left\langle  1_{\{t \leq \cdot\}}  L(\cdot,t), {\boldsymbol D}_t^{-1}   1_{\{t\leq \cdot\}}  K(\cdot,r)  \right  \rangle_{L^2}v_r dr  + \langle 1_{\{t \leq \cdot\}} L(\cdot,t),  {\boldsymbol D}_t^{-1}1_{\{t \leq \cdot\}} \E_t f_{\cdot} \rangle_{L^2} . 
\end{aligned}
\ee
Inserting the {obtained} expression back into \eqref{eq:ubar2} yields the following Fredholm equation for $v$:
\begin{align} \label{eq:FHubar2} 
v_t = a_t + \int_0^t B(t,s)  v_s ds, 
\end{align}
with $a$ and $B$ as in \eqref{eq:FHubarCoeff2}. 

It is left to argue that \eqref{eq:FHubar2} has a solution.  Note that \eqref{eq:FHubar2} is a linear Volterra equation which admits a solution for any fixed $\omega \in \Omega$, whenever $a(\omega)\in L^2([0,T],\mathbb R)$ and $B$ satisfies $$\sup_{t\leq T} \int_0^T B(t,s)^2 ds < \infty. $$  Indeed, the solution is given in terms of the resolvent  $R^B$ of $B$,  which exists by virtue of Corollary 9.3.16 in \cite{gripenberg1990volterra} and satisfies
\begin{align*}
\int_0^T \int_0^T |R^B(t,s)| dt ds < \infty.
\end{align*}
In this case, the solution $v$ to~\eqref{eq:FHubar2} is given by
 \begin{align*}
    v_t = a_t + \int_0^t R^B(t,s) a_s ds. 
 \end{align*}
Note that $R^B$ is the kernel induced by the operator given by 
 $$
\boldsymbol R^B =  (\id - \boldsymbol B)^{-1} - {\id}. 
$$
One would still need to check that $u^* \in \mathcal U$ defined as in \eqref{def:admissset}. This follows from the following lemma. 

\begin{lemma} \label{bnd-coef} 
Let $v$ be as in \eqref{v-eq} and $a, B$ as in \eqref{eq:FHubarCoeff2}. Then the following hold: 
\begin{itemize} 
\item[\bf{(i)}]  $\mathbb E\left[ \int_0^Ta_s^2 ds \right] <\infty$, 
\item[\bf{(ii)}] $\sup_{t\leq T} \int_0^T B(t,s)^2 ds < \infty, $ 
\item[\bf{(iii)}] {$\mathbb E\left[\int_0^T (v_s)^2 ds\right] < \infty$.}  
\end{itemize} 
\end{lemma}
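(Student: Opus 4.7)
The three claims are essentially moment estimates obtained by combining Cauchy--Schwarz with the boundedness of $\boldsymbol{D}_t^{-1}$ (which is already provided by Lemma~\ref{lem-d-op}) and the standing integrability condition~\eqref{eq:assumtionG} for kernels in $\mathcal{G}$.

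For (i), I would start from the definition of $a_t$ and write
\begin{equation*}
|a_t|^2 \leq 2|f_t|^2 + 2\bigl\|1_{\{t<\cdot\}} L(\cdot,t)\bigr\|_{L^2}^2 \,\bigl\|\boldsymbol{D}_t^{-1} 1_{\{t<\cdot\}} \mathbb{E}_t f_{\cdot}\bigr\|_{L^2}^2.
\end{equation*}
Applying the uniform bound $\sup_{t\leq T}\|\boldsymbol{D}_t^{-1}\|_{\mathrm{op}} \leq C$ from Lemma~\ref{lem-d-op} and the bound $\sup_{t\leq T} \int_0^T |L(r,t)|^2 dr < \infty$ from $L\in\mathcal{G}$, it remains to estimate $\int_0^T \mathbb{E}\bigl[\int_t^T (\mathbb{E}_t f_s)^2 ds\bigr] dt$. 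By Jensen's inequality $\mathbb{E}[(\mathbb{E}_t f_s)^2] \leq \mathbb{E}[f_s^2]$, so this double integral is bounded by $T\int_0^T \mathbb{E}[f_s^2]ds$, which is finite by hypothesis.

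For (ii), the argument is analogous but purely deterministic. Splitting $B(t,s)$ into its two summands and applying Cauchy--Schwarz to the inner product yields
\begin{equation*}
\int_0^T |B(t,s)|^2 ds \leq 2C^2 \Big(\int_0^T |L(r,t)|^2 dr\Big)\sup_{s\leq T}\int_0^T |K(r,s)|^2 dr + 2\int_0^T |K(t,s)|^2 ds,
\end{equation*}
and both $\sup$'s in $t$ are finite because $K,L \in \mathcal{G}$.

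For (iii), I would use the resolvent representation recalled just above the lemma: $v = (\id-\boldsymbol{B})^{-1} a = a + \boldsymbol{R}^B a$. The key point is that $B$ is a Volterra kernel satisfying $\sup_t \int_0^T B(t,s)^2 ds < \infty$ by part~(ii), so by Corollary~9.3.16 in \cite{gripenberg1990volterra} the resolvent $R^B$ exists and inherits the same type of bound; consequently $(\id - \boldsymbol{B})^{-1}$ is a bounded operator on $L^2([0,T],\mathbb{R})$ with operator norm $C' < \infty$ depending only on $B$. Applying this pathwise gives $\int_0^T v_t^2 dt \leq (C')^2 \int_0^T a_t^2 dt$, and taking expectations together with (i) finishes the proof.

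The only mildly non-routine step is (iii): one must verify that the deterministic $L^2$-boundedness of $(\id-\boldsymbol{B})^{-1}$ transfers pathwise to the stochastic setting in a measurable way. This is handled by the fact that $B$ is deterministic, so $(\id-\boldsymbol{B})^{-1}$ acts on each trajectory $a(\omega,\cdot) \in L^2([0,T],\mathbb{R})$ with a deterministic bound, and Fubini then yields the desired moment estimate.
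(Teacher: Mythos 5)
Your proof is correct. Note that the paper does not actually give an argument for this lemma---it states only that the proof is similar to that of Lemma 7.1 in \cite{abi2022optimal} and omits it---so there is no in-paper proof to compare against; your write-up supplies precisely the standard estimates the authors are alluding to. Parts (i) and (ii) are the expected Cauchy--Schwarz computations using $\sup_{t\leq T}\|\boldsymbol{D}_t^{-1}\|_{\rm op}<\infty$ from Lemma~\ref{lem-d-op}, the kernel bounds \eqref{eq:assumtionG} for $K,L\in\mathcal G$, and conditional Jensen for the $\E_t f$ term (in (ii) you have dropped a harmless factor of $T$ from integrating the uniform bound on $\int_t^T K(r,s)^2\,dr$ over $s$, which does not affect finiteness). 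The one place where you assert more than the paper explicitly records is in (iii): the text surrounding the lemma only quotes $\int_0^T\int_0^T|R^B(t,s)|\,dt\,ds<\infty$ for the resolvent, which by itself would not give $L^2$-operator boundedness of $(\id-\boldsymbol{B})^{-1}$. You correctly invoke the stronger fact that a Volterra kernel satisfying $\sup_{t\leq T}\int_0^T B(t,s)^2\,ds<\infty$ has a resolvent inheriting a bound of the same type (this is what the results of \cite{gripenberg1990volterra} on Volterra kernels of $L^2$ type actually deliver, the Volterra structure forcing the relevant spectral radius to vanish); from $\sup_{t\leq T}\int_0^T R^B(t,s)^2\,ds<\infty$ one gets a deterministic operator bound on $L^2([0,T],\R)$ that can be applied pathwise to $a(\omega,\cdot)$ and integrated via Fubini, exactly as you do. So the proposal is a complete and correct filling-in of the omitted proof.
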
 
The proof of Lemma \ref{bnd-coef} is similar to the proof of Lemma 7.1 in \cite{abi2022optimal}; hence, it is omitted. 

It follows that the equation \eqref{eq:FHubar2} admits a unique solution $v \in \mathcal{U}$ which is given by 
$$
    v_t = \left((\id - \boldsymbol B)^{-1} a \right) (t), \quad 0\leq t \leq T,  
$$
with $a$ given in \eqref{eq:FHubarCoeff2} and $\boldsymbol B$ is the integral operator induced by the kernel $B$ in \eqref{eq:FHubarCoeff2}.
\end{proof}

 \section{Proofs of Theorems~\ref{thm:opt_ubar} and~\ref{thm:main-finite}} \label{sec-proof-finite}
 
This section is dedicated to the proofs of Theorems~\ref{thm:opt_ubar} and~\ref{thm:main-finite}. 
In order to ease the notation, we omit the upper case $N$ from $u^N, \bar u^N, u^{i,N}, J^{i,N}$, etc., throughout this section. 

We first establish the strict concavity property of $u^{i} \mapsto J^{i}(u^{i};u^{-i})$ which is crucial for the derivation of the Nash equilibrium.

\begin{lemma} \label{lem:concave_finite} 
Let $i \in \{1,\ldots, N\}$. Then, under Assumption~\ref{assum-op}, for any $u^{-i} \in \mathcal{U}^{N-1}$ fixed, the functional $u^{i} \mapsto J^{i}(u^{i};u^{-i})$ in~\eqref{eq:J^i} is strictly concave in $u^{i} \in \mathcal{U}$.
\end{lemma}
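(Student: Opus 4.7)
The plan is to isolate the dependence of $J^i(u^i; u^{-i})$ on $u^i$ by writing $\bar u = \tfrac{1}{N} u^i + \bar u^{-i}$ with $\bar u^{-i} := \tfrac{1}{N} \sum_{j \neq i} u^j$ fixed. Expanding the three quadratic terms of \eqref{eq:J^i} in powers of $u^i$, and using the identity $\langle f, (\boldsymbol A_3 + \boldsymbol A_3^*) f \rangle_{L^2} = 2 \langle f, \boldsymbol A_3 f \rangle_{L^2}$ together with $\boldsymbol A_2 = \lambda \id + \boldsymbol{\hat A}_2$, the purely quadratic contribution in $u^i$ takes the form
\[
-\E\Big[\big\langle u^i, \boldsymbol M u^i \big\rangle_{L^2}\Big], \qquad \boldsymbol M := \lambda \id + \boldsymbol{\hat A}_2 + \tfrac{1}{N^2} \boldsymbol A_1 + \tfrac{2}{N} \boldsymbol A_3,
\]
while all remaining terms are either linear in $u^i$ (with coefficients built from $u^{-i}, b^i, b^0$ and $\boldsymbol A_1, \boldsymbol A_3$) or independent of $u^i$.

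Next, I would invoke Remark~\ref{rem:opNonNeg}, which, by Assumption~\ref{assum-op} together with the fact that $\boldsymbol A_1, \boldsymbol{\hat A}_2, \boldsymbol A_3$ are admissible Volterra operators, ensures that $\langle f, \boldsymbol A_1 f \rangle_{L^2}, \langle f, \boldsymbol{\hat A}_2 f \rangle_{L^2}, \langle f, \boldsymbol A_3 f \rangle_{L^2} \geq 0$ for every $f \in L^2([0,T], \mathbb{R})$. Combined with $\lambda > 0$, this yields the coercivity estimate
\[
\langle f, \boldsymbol M f \rangle_{L^2} \geq \lambda \|f\|_{L^2}^2, \qquad f \in L^2([0,T], \mathbb{R}).
\]

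Finally, strict concavity follows from a standard quadratic identity. For $u, v \in \mathcal U$ with $u \neq v$ in $L^2(\Omega \times [0,T])$ and $\theta \in (0,1)$, since $u^i \mapsto J^i(u^i; u^{-i})$ is a sum of a quadratic form $-\E[\langle u^i, \boldsymbol M u^i\rangle_{L^2}]$, a linear functional, and a constant, a direct expansion gives
\[
J^i(\theta u + (1-\theta) v; u^{-i}) - \theta J^i(u; u^{-i}) - (1-\theta) J^i(v; u^{-i}) = \theta(1-\theta)\,\E\big[\langle u-v, \boldsymbol M (u-v)\rangle_{L^2}\big] \geq \theta(1-\theta)\, \lambda\, \E\big[\|u-v\|_{L^2}^2\big] > 0,
\]
which is the desired strict concavity. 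The only substantive step is the bookkeeping of the quadratic contributions to $J^i$ and the application of Remark~\ref{rem:opNonNeg}; the positive constant $\lambda>0$ in $\boldsymbol A_2$ does all the work to upgrade concavity to strict concavity, regardless of the sizes of $\boldsymbol A_1$ and $\boldsymbol A_3$.
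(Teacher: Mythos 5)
Your proposal is correct and follows essentially the same route as the paper: decompose $\bar u = \tfrac1N u^i + \bar u^{-i}$, isolate the quadratic operator $\tfrac{1}{N^2}\boldsymbol A_1 + \tfrac{1}{N}(\boldsymbol A_3+\boldsymbol A_3^*) + \boldsymbol{\hat A}_2 + \lambda\id$ acting on $u^i$ (your $\tfrac{2}{N}\boldsymbol A_3$ is the same quadratic form), apply Remark~\ref{rem:opNonNeg} to the nonnegative definite Volterra parts, and let $\lambda>0$ force strict positivity of the concavity gap via the standard quadratic identity. No gaps.
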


\begin{proof}
Fix $i \in \{1,\ldots, N\}$, let $u^i \in \mathcal{U}$ and $u^{-i} \in \mathcal{U}^{N-1}$. First, using the notation from~\eqref{def:uNotation}, we  have
 \begin{align*}
\langle \bar u,\boldsymbol{A}_1 \bar u \rangle_{L^2} = & \langle \frac{u^i}{N} + \bar u^{-i}, \boldsymbol{A}_1 (\frac{u^i}{N} + \bar u^{-i}) \rangle_{L^2} \\
= & \langle \frac{u^i}{N}, \boldsymbol{A}_1 \frac{u^i}{N} \rangle_{L^2} + \langle \frac{u^i}{N}, \boldsymbol{A}_1 \bar u^{-i} \rangle_{L^2} + \langle \bar u^{-i}, \boldsymbol{A}_1 \frac{u^i}{N} \rangle_{L^2}+ \langle \bar u^{-i}, \boldsymbol{A}_1 \bar u^{-i} \rangle_{L^2} \\
 = & \langle \frac{u^i}{N}, \boldsymbol{A}_1 \frac{u^i}{N} \rangle_{L^2} + \frac{1}{N} \langle  u^i , (\boldsymbol{A}_1 +\boldsymbol{A}_1^*)\bar u^{-i} \rangle_{L^2} + \langle \bar u^{-i}, \boldsymbol{A}_1 \bar u^{-i} \rangle_{L^2}. 
\end{align*}
Hence, we can rewrite the objective functional in~\eqref{eq:J^i} as follows 
\begin{equation}\label{eq:Jlemma}
\begin{aligned} 
   & J^{i}(u^i; u^{-i}) \\
    & = \E\Big[ - \langle \bar u,\boldsymbol{A}_1 \bar u \rangle_{L^2} - \langle u^i,\boldsymbol{A}_2 u^i \rangle_{L^2} - \langle u^i,(\boldsymbol{A}_3 + \boldsymbol{A}_4^* ) \bar u \rangle_{L^2} \Big. \\
    & \Big. \hspace{30pt} + \langle b^i, u^i \rangle_{L^2} + \langle b^0, \bar u\rangle_{L^2} + c^i \Big] \\
    & = \E\Big[-\langle u^i, \big(\frac{\boldsymbol{A}_1 }{N^2} + \frac{\boldsymbol{A}_3 +  \boldsymbol{A}_4^*}{N}+\boldsymbol A_2 \big) u^i \rangle_{L^2} - \langle u^i, ( \frac{\boldsymbol{A}_1+\boldsymbol{A}_1^*}{N} + \boldsymbol{A}_3 + \boldsymbol{A}_4^*)\bar u^{-i} \rangle_{L^2} \\
    & \hspace{30pt} - \langle \bar u^{-i},\boldsymbol A_1 \bar u^{-i} \rangle_{L^2} +
   \langle b^i +  \frac{1}{N} b^0, u^i \rangle_{L^2} + \langle b^0, \bar u^{-i} \rangle_{L^2} + c^i \Big]. 
\end{aligned}
\end{equation}  
Then, for any $w^i \in \mathcal{U}$ such that $w^i \neq u^i$, $d\P \otimes dt$-a.e.~on $\Omega \times [0,T]$, and for all $\varepsilon \in (0,1)$, a direct computation yields 
\begin{equation}
    \begin{aligned}
        & J^{i}(\varepsilon u^{i} + (1-\varepsilon) w^{i} ; u^{-i}) - \varepsilon J^{i}(u^{i};  u^{-i}) - (1-\varepsilon) J^{i}(w^{i};  u^{-i}) \\
        & = \varepsilon (1-\varepsilon) \E\left[ \langle u^i - w^i , \big(\frac{\boldsymbol{A}_1 }{N^2} +\frac{ \boldsymbol{A}_3 + \boldsymbol{A}_4^*}{N}+\boldsymbol{\hat{A}}_2  + \lambda \id  \big) (u^i - w^i) \rangle_{L^2} \right] \\        
        & = \varepsilon (1-\varepsilon) \left( \E\left[ \langle u^i - w^i ,\big(\frac{\boldsymbol{A}_1 }{N^2} +\frac{ \boldsymbol{A}_3 + \boldsymbol{A}_4^*}{N}+\boldsymbol{\hat{A}}_2 \big) (u^i - w^i) \rangle_{L^2} \right] + \lambda \E[ \| u^i- w^i\|_{L^2}^2 ] \right) \\ 
        & = \varepsilon (1-\varepsilon) \left( \E\left[ \langle u^i - w^i ,\big(\frac{\boldsymbol{A}_1 }{N^2} + \frac{\boldsymbol{A}_3 + \boldsymbol{A}_4}{N}+\boldsymbol{\hat{A}}_2 \big) (u^i - w^i) \rangle_{L^2} \right] + \lambda \E[ \| u^i- w^i\|_{L^2}^2 ] \right) \\
        & = \varepsilon (1-\varepsilon) \left( \E\left[ \langle u^i - w^i , \boldsymbol{G} (u^i - w^i) \rangle_{L^2} \right] + \lambda \E[ \| u^i- w^i\|_{L^2}^2 ] \right) \\
        & > 0,
    \end{aligned}
\end{equation}
where we used  the fact that $\| u^i- w^i\|_{L^2}^2 >0$, $\P$-a.s., Remark \ref{rem:opNonNeg} and \eqref{b-bar}, and the definition of the adjoint $\langle u, \boldsymbol{A}_4^* u \rangle_{L^2} = \langle u, \boldsymbol{A}_4 u \rangle_{L^2}$.
\end{proof}

Let $i \in \{1,...,N\}$ and $u^{-i} \in \mathcal U^{N-1}$. Lemma \ref{lem:concave_finite} {states} that the map $J^i(\cdot; u^{-i}): u^i \mapsto J^i(u^i; u^{-i})$ of player $i$'s best response to all other players' fixed strategies $u^{-i}$ is strictly concave. Hence, it admits a unique maximiser characterised by the critical point at which the G\^ateaux derivative
\begin{align*}
    \langle  \nabla  J^i(u^i; u^{-i}) , h  \rangle = \lim_{\epsilon \to 0} \frac{ J^i(u^i + \epsilon h; u^{-i}) -  J^i(u^i; u^{-i})}{\epsilon},
\end{align*}
vanishes for all $h\in \mathcal U$. Therefore, a control $u^i \in \mathcal U$ {maximises} \eqref{eq:J^i} if and only if $u^i$ satisfies the first order condition 
\begin{align} \label{foc} 
 \langle \nabla J^i(u^i; u^{-i}) , h  \rangle  = 0, \quad h \in \mathcal U,
\end{align}
(see Proposition 2.1 of \cite[Chapter II]{EkelTem:99}).

Using \eqref{eq:Jlemma} and \eqref{def:operator} we obtain by an application of Fubini's theorem, that 
\begin{align}\label{eq:FOCC}
&\langle \nabla  J^i(u^i; u^{-i}) , h  \rangle   \\
&=\mathbb E\left[ \langle h,  \,-2\lam u^i  -  (\boldsymbol{G} + \boldsymbol{G}^*)(u^i) -\boldsymbol{H} (\bar{u}^{-i})+ b^i +  \frac{1}{N} b^0 \rangle_{L^2}    \right]
\\
&= \int_0^T \mathbb E\left[ h_t \left(   \,-2\lam u^i_t  -  (\boldsymbol{G} + \boldsymbol{G}^*)(u^i)(t) - \boldsymbol{H}( \bar{u}^{-i})(t) + b^i_t + \frac{1}{N} b^0_t \right) \right] dt,
\end{align} 
for any $h \in \mathcal U$.
By conditioning on $\mathcal F_t$ and using the  tower property we get from \eqref{def:uNotation}, \eqref{foc} and \eqref{eq:FOCC} the following first order condition, 
\begin{equation} \label{eq:FOCN1}
\begin{aligned} 
&-2\lam u^i_t  -  (\boldsymbol{G} + \boldsymbol{G}^*)(\E_t u^i)(t) - \boldsymbol{H}(\E_t \bar{u}^{-i})(t)   +   b^i_t + \frac{1}{N} b^0_t \\ 
&\quad = \, -2\lam u^i_t-  \left(\boldsymbol{G} + \boldsymbol{G}^* -\frac{1}{N} \boldsymbol{H} \right)(\E_t u^i)(t) - \boldsymbol{H}(\E_t \bar{u})(t) + b^i_t + \frac{1}{N} b^0_t 
\\
&\quad =0,
\quad d\P \otimes dt \textrm{-a.e.~on } \Omega \times [0,T].
\end{aligned}
\end{equation}
It follows that for the Nash equilibrium the following must hold for all $i \in \{1,\ldots,N\}$, 
\begin{equation} \label{eq:FOCN2}
\begin{aligned}
    2 \lambda u_t^i = & \,  b^i_t + \frac{1}{N} b^0_t - \int_0^t \left(\frac{A_1}{N} + A_3\right)(t,r) \bar u_r dr - \int_t^T \left( \frac{A_1 }{N} + A_4 \right)(r,t)  \E_t \bar  u_r dr  \\&- \int_0^t \left(   G(t,r)-\frac{1}{N}  \left(\frac{A_1}{N}+A_3 \right)(t,r)\right) u^i_r  dr  \\ & - \int_t^T     \left(   G(r,t)-\frac{1}{N}  \left( \frac{A_1}{N} + A_4 \right)(r,t)\right)  \E_t u^i_r dr ,
\end{aligned}
\end{equation}
$d\P \otimes dt \textrm{-a.e.~on } \Omega \times [0,T]$.
By summing over $i$ in~\eqref{eq:FOCN2}, using \eqref{b-bar-def} and scaling by $1/N$ we get the following equation for $\bar u$: 
\begin{align}
    2 \lambda \bar u_t = & \,  \bar b_t + \frac{1}{N} b^0_t  - \int_0^t \left( \left(\frac{A_1}{N}+A_3\right)(t,r) + G(t,r) - \frac{1}{N} \left(\frac{A_1}{N}+A_3\right)(t,r) \right) \bar u_r dr \nonumber \\ 
    & \, - \int_t^T   \left(  G(r,t)  +  \left(\frac{A_1}{N} + A_4 \right) (r,t)-\frac{1}{N} \left( \frac{A_1}{N} + A_4 \right) \right)  \E_t \bar u_r dr \nonumber \\
     =& \,  \bar b_t + \frac{1}{N} b^0_t - \int_0^t \left( \frac{N-1}{N} \left(\frac{A_1}{N}+A_3\right)(t,r) + G(t,r)   \right) \bar u_r dr \nonumber \\
     & \, - \int_t^T    \left( \frac{N-1}{N} \left(\frac{A_1}{N} + A_4 \right) (r,t) + G(r,t)   \right)  \E_t \bar u_r dr,   \label{eq:FOCaggregate} 
\end{align}
$d\P \otimes dt \textrm{-a.e.~on } \Omega \times [0,T]$. After establishing \eqref{eq:FOCaggregate} we are now ready to prove Theorem~\ref{thm:opt_ubar}.

\begin{lemma} \label{new-lem}
Let $\boldsymbol{K}$ be an admissible Volterra operator as in Definition \ref{def-admis-op}. Then for any $\eps>0$, $\eps\id+\boldsymbol{K}_t$ is invertible and  
$\|(\eps\id+\boldsymbol  K_t)^{-1}\|_{op} \leq 1/\eps$.
\end{lemma} 
\begin{proof}
Property~\eqref{pos-def} implies for any $f\in L^2\left([0,T],\mathbb R\right)$, 
\begin{equation} \label{gt1}
     \langle f, \boldsymbol K f\rangle_{L^2} = \frac{1}{2}\langle f, (\boldsymbol K + \boldsymbol K^*) f \rangle_{L^2}  \geq 0.
\end{equation}
Let $(\lam,f)$ be an eigen-pair  of $\eps\id +\boldsymbol K_t$. Then $\lam \|f\|_{L^2}^2 =  \langle f, (\eps\id +\boldsymbol K_t) f\rangle_{L^2} > 0$ by \eqref{gt1} and the fact that $
\boldsymbol K + \boldsymbol K^*$ is a   
 symmetric non-negative definite operator.  The proof of the bound $\|(\eps\id+\boldsymbol  K_t)^{-1}\|_{op} \leq 1/\eps$ is similar to the proof of Lemma 7.2 in \cite{abi2022optimal} since 
$$
\langle f, (\eps\id +\boldsymbol K_t)  f \rangle \geq \langle f,\eps \id f \rangle  \geq \eps\|f\|, \quad \textrm{for all } f \in L^2([0,T],\mathbb R). 
$$
\end{proof}

\begin{proof}[Proof of Theorem \ref{thm:opt_ubar}]
We now turn into the derivation of \eqref{eq:opt_ubar}. We apply Proposition \ref{L:FredholmConditional} to~\eqref{eq:FOCaggregate} for $\bar u$ instead of $v$. Specifically we have 
\begin{equation} \label{f-k-l-1} 
\begin{aligned}
f_t = & \, \frac{1}{2\lambda} \left(\bar b_t +\frac{1}{N} b^0_t\right) \qquad\qquad\qquad\qquad\qquad \\
2 \lambda K = & \, \frac{N-1}{N} \left(\frac{A_1}{N}+A_3\right) + G , \\ 
2 \lambda L = & \, \frac{N-1}{N} \left( \frac{A_1}{N} + A_4 \right) + G.
\end{aligned}
\end{equation}
In this case we have that ${\boldsymbol{D}}_t$ in \eqref{d-opt} is of the form $\frac{1}{2\lambda} \ol {\boldsymbol{D}}_t$ with  
\be
 \ol {\boldsymbol{D}}_t = 2\lam(\id +   \boldsymbol{K}_t + \boldsymbol{L^*}_t) = 2\lam\id + \frac{N-1}{N} \boldsymbol{H}_t + \boldsymbol{G}_t+ \boldsymbol{G}^*_t. 
\ee
Note that $ \ol  {\boldsymbol{D}}_t$ is invertible and satisfies \eqref{bounded-dD}  since $\ol {\boldsymbol{D}}_t + \ol {\boldsymbol{D}}^*_t$ is  positive semi-definite (see Lemma \ref{new-lem}). From Assumption \ref{assum-op} and \eqref{def:operator} it follows that $G \in \mathcal G$, therefore ${K}, {L} \in \mathcal G$. Hence, together with \eqref{ass:P} it follows that the assumptions of Proposition \ref{L:FredholmConditional} are satisfied and we have 

\be 
\begin{aligned}
\bar {u}_{t}  = \left((\id -  \ol {\boldsymbol B})^{-1} \ol {a} \right) (t), \quad 0\leq t \leq T.
\end{aligned}
\ee
where 
\begin{equation} 
\begin{aligned} 
\ol a_t & := \frac 1{2\lambda} \left(\bar b_{t} +\frac{1}{N} b^0_t  -  \left\langle  1_{\{t\leq \cdot\}} \ol L(\cdot,t), \ol{\boldsymbol D}_t^{-1} 1_{\{t\leq \cdot\}} \E_t \left[\bar{b}_{\cdot} + \frac{1}{N} b^0_{\cdot}\right] \right\rangle_{L^2} \right), \\
\ol B(t,s) & :=  1_{\{s<t\}}\frac 1{2\lambda} \bigg( \left\langle   1_{\{t\leq \cdot\}} \ol L(\cdot,t),\ol{\boldsymbol D}_t^{-1}   1_{\{t\leq \cdot\}}  \ol K(\cdot,s)    \right  \rangle_{L^2}   -  \ol K(t,s)    \bigg),
\end{aligned}
\end{equation}
with $\ol K:= 2\lambda K$ and $\ol L:= 2\lambda L$ for $K, L$ as in \eqref{f-k-l-1}. This agrees with \eqref{eq:opt_ubar} and completes the proof. 
\end{proof} 

\begin{proof}[Proof of Theorem \ref{thm:main-finite}]
Given the solution $\bar u$ from Theorem \ref{thm:opt_ubar}, we can now continue with solving the first order condition~\eqref{eq:FOCN2} separately for each $u^i$, $i \in \{1,\ldots,N\}$ and hence derive a Nash equilibrium to the game \eqref{eq:J^i}. 

We apply Proposition \ref{L:FredholmConditional} to~\eqref{eq:FOCN2} for $u^i$ instead of $v$. Specifically we have
\be \label{f-k-l-22} 
\begin{aligned}          
f_t & = \frac{1}{2\lambda} \Bigg( b^i_t + \frac{1}{N} b^0_t - \int_0^t \left(\frac{A_1(t,r)}{N} + A_3(t,r) \right) \bar u_r dr \Bigg. \\
& \hspace{38pt} \Bigg. - \int_t^T \left( \frac{A_1(r,t)}{N} + A_4(r,t) \right) \E_t \bar  u_r dr  \Bigg), 
\end{aligned}
\ee
From \eqref{f-k-l-22}, \eqref{def:operator} and \eqref{b-bar} {we can apply Proposition \ref{L:FredholmConditional} to }
\be \label{l-opt22} 
\begin{aligned} 
 2\lambda K = G - \frac{1}{N} \left(\frac{A_1}{N}+A_3\right), \quad  2\lambda  L = G -   \frac{1}{N} \left( \frac{A_1}{N} +A_4 \right).
 \end{aligned} 
\ee
By Assumption \ref{assum-op} it  follows that $A_1, A_3, A_4, G \in \mathcal G$, therefore $K, L \in \mathcal G$. Hence the assumptions of Proposition \ref{L:FredholmConditional} are satisfied. Note that ${\boldsymbol{D}}_t$ in Proposition \ref{L:FredholmConditional} is of the form $\frac{1}{2\lambda} \wt{\boldsymbol{D}}_t$ with 
\be \label{d-opt2} 
\begin{aligned} 
 \wt{\boldsymbol{D}}_t &:=   2\lambda\left( \id+   \boldsymbol{K}_t + \boldsymbol{L}^*_t \right) = 2\lambda \id - \frac{1}{N}\boldsymbol{H}_t +\boldsymbol{G}_t+ \boldsymbol{G}^*_t.
 \end{aligned} 
\ee
Note that $ \wt  {\boldsymbol{D}}_t$ is invertible and satisfies \eqref{bounded-dD}  since $ \wt {\boldsymbol{D}}_t +  \wt {\boldsymbol{D}}^*_t$ is  positive semi-definite (see Lemma \ref{new-lem}). Using Proposition \ref{L:FredholmConditional} we obtain for all $i \in \{ 1,\ldots,N\}$ the representation 
$$
\begin{aligned}
 \hat{u}^{i}_{t} =\left((\id -  \boldsymbol{B})^{-1} a^i \right) (t), \quad 0\leq t \leq T, 
\end{aligned}
$$
where
 \begin{equation} 
\begin{aligned}
a^i_t & := \frac 1{2\lambda} \bigg(b^i_{t} +\frac{1}{N} b^0_t - \E_t\left[(\boldsymbol{H}\bar u)(t)\right]  \\
&\qquad \qquad \qquad -  \left\langle{   1_{\{t\leq \cdot\}} \hat L(\cdot,t)},  \wt{\boldsymbol D}_t^{-1} 1_{\{t\leq \cdot\}} \E_t \left[b^i_{\cdot} +\frac{1}{N} b^0_{\cdot} - ( \boldsymbol{H} \bar u)(\cdot) \right] \right\rangle_{L^2} \bigg), \\
B(t,s) & :=  1_{\{s<t\}}\frac 1{2\lambda} \bigg(  \left\langle   1_{\{t\leq \cdot\}} \hat L(\cdot,t),  \wt {\boldsymbol D}_t^{-1}   1_{\{t\leq \cdot\}}     \hat K(\cdot, s) \right  \rangle_{L^2} -\hat K (t, s)    \bigg), 
\end{aligned}
\end{equation}
where $\hat K = {2\lambda}K$ and $\hat L = {2\lambda}L$. 
This agrees with \eqref{eq:opt_ui} and \eqref{eq:FHuiCoeff}.  

Next, we argue for uniqueness of the Nash equilibrium. Assume that $(w^1,\ldots, w^N)$ in $\mathcal U^N$ is another Nash equilibrium. Then, by Definition~\ref{def:Nash} it  holds that
\begin{equation} \label{eq:twoNEs2}
   \quad J^{i}(w^{i};w^{-i}) \geq J^{i}(v;w^{-i}), \quad \textrm{for all } v\in \mathcal{U}, \, i \in \{1,\ldots,N\}.
\end{equation}
In particular, for a fixed $i \in \{1,\ldots, N\} $, $w^i$ maximizes the strictly concave functional $w\to J^{i}(w; w^{-i})$ (recall Lemma \ref{lem:concave_finite}). This means that \eqref{foc} is satisfied with $w$ in place of $u$.  It follows that $\bar w$ satisfies the equation \eqref{eq:FOCaggregate} with $\bar u$ replaced by $\bar w$. By uniqueness of the solution to the Fredholm equation  \eqref{eq:FOCaggregate}, see Proposition~\ref{L:FredholmConditional}, we get $\bar w = \bar u$, $d\mathbb P \otimes dt-a.e.$ on $\Omega\times [0,T]$. Similarly, we obtain  $w^i=v^i$ from the uniqueness of solutions to the Fredholm equation \eqref{eq:FOCN2} where  we replaced $u$ by $w$.  This concludes the proof of uniqueness. 
\end{proof} 

\section{Proofs of the results from Section~\ref{sec:MeanFieldGame}}
\label{sec-proof-infinite}

Similar to the finite-player game in Section~\ref{sec:FinitePlayer}, we have the following strict concavity result.

\begin{lemma} \label{lem:concave_mfg}
Under Assumption~\ref{assum-op}, for any $\mu \in \mathcal{U}^{0}$, the functional $v \mapsto J(v;\mu)$ in~\eqref{eq:J_generic} is strictly concave in $v \in \mathcal{U}$.
\end{lemma}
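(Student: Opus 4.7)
The plan is to mirror the strategy of Lemma~\ref{lem:concave_finite}, but in a considerably simpler form since here the mean-field term $\mu$ is a fixed element of $\mathcal{U}^0$ that does not contain $v$ as a component (unlike $\bar u$ in the $N$-player case, where $u^i$ contributed a $1/N$ share). In particular, once $\mu$ is frozen, the only $v$-quadratic term in~\eqref{eq:J_generic} is $-\E[\langle v,\boldsymbol{A}_2 v\rangle_{L^2}]$; all remaining $v$-dependent terms, namely $-\E[\langle v,(\boldsymbol{A}_3+\boldsymbol{A}_3^*)\mu\rangle_{L^2}]$ and $\E[\langle b,v\rangle_{L^2}]$, are affine in $v$, so they contribute nothing to the second-order behaviour.

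Concretely, I would fix $v,w\in\mathcal{U}$ with $v\neq w$ in the sense that $\|v-w\|_{L^2}^2>0$ on a set of positive $\P$-measure, together with $\varepsilon\in(0,1)$, and expand
\begin{equation*}
J(\varepsilon v + (1-\varepsilon) w;\mu) - \varepsilon J(v;\mu) - (1-\varepsilon) J(w;\mu) = \varepsilon(1-\varepsilon)\,\E\!\left[\langle v-w,\boldsymbol{A}_2 (v-w)\rangle_{L^2}\right],
\end{equation*}
where all affine-in-$v$ pieces cancel by linearity and the $\mu$-only terms drop out since they are independent of $v$.

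To conclude strict concavity, I would invoke Assumption~\ref{assum-op} to decompose $\boldsymbol{A}_2=\lambda\id + \boldsymbol{\hat A}_2$, so that
\begin{equation*}
\langle v-w,\boldsymbol{A}_2 (v-w)\rangle_{L^2} = \lambda\|v-w\|_{L^2}^2 + \langle v-w,\boldsymbol{\hat A}_2 (v-w)\rangle_{L^2} \geq \lambda\|v-w\|_{L^2}^2,
\end{equation*}
where the inequality uses Remark~\ref{rem:opNonNeg} applied to the admissible Volterra operator $\boldsymbol{\hat A}_2$. Taking expectations and using $\lambda>0$ together with $\E[\|v-w\|_{L^2}^2]>0$ gives the strict positivity of the right-hand side, which is the desired strict concavity.

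No real obstacle is anticipated here, since strict concavity is driven entirely by the coercive $\lambda\id$ piece of $\boldsymbol{A}_2$; the nonnegativity of $\boldsymbol{\hat A}_2$ only helps. The sole subtlety is ensuring that the admissibility hypothesis indeed yields $\langle f,\boldsymbol{\hat A}_2 f\rangle_{L^2}\geq 0$ pathwise in $L^2([0,T],\R)$, which is precisely the content of Remark~\ref{rem:opNonNeg} combined with Definition~\ref{def-ker-admis}.
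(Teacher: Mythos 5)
Your proposal is correct and follows essentially the same route as the paper: the paper's proof likewise reduces to the identity $J(\varepsilon v + (1-\varepsilon)w;\mu) - \varepsilon J(v;\mu) - (1-\varepsilon)J(w;\mu) = \varepsilon(1-\varepsilon)\,\E[\langle v-w,\boldsymbol{A}_2(v-w)\rangle_{L^2}]$, splits $\boldsymbol{A}_2 = \lambda\id + \boldsymbol{\hat A}_2$ via Assumption~\ref{assum-op}, and concludes strict positivity from $\lambda>0$ and the nonnegative definiteness of $\boldsymbol{\hat A}_2$ in Remark~\ref{rem:opNonNeg}. Your observation that the argument is simpler than in the finite-player case because $\mu$ is frozen and does not contain $v$ is exactly why the paper dispatches this lemma by reference to Lemma~\ref{lem:concave_finite}.
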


\begin{proof}
This follows similar as in the proof of Lemma~\ref{lem:concave_finite} above. Let $v \in \mathcal{U}$ and $\mu \in \mathcal{U}^{0}$. Then, for any $w \in \mathcal{U}$ such that $w \neq v$ $d\P \otimes dt$-a.e.~on $\Omega \times [0,T]$ and for all $\varepsilon \in (0,1)$ a direct computation, combined with the definition of $\boldsymbol{A}_2$ in \eqref{b-bar},  shows that 
\begin{equation}
\begin{aligned}
& J(\varepsilon v + (1-\varepsilon) w ; \mu) - \varepsilon J(v; \mu) - (1-\varepsilon) J(w; \mu) \\
& = \varepsilon (1-\varepsilon) \E\left[ \langle v - w , \boldsymbol{A}_2 (v - w) \rangle_{L^2} \right] \\
& = \varepsilon (1-\varepsilon) \Big( \E\left[ \langle v - w , \boldsymbol{\hat{A}}_2 (v - w) \rangle_{L^2} \right] + \lambda \E[ \| v- w\|_{L^2}^2 ] \Big) \\
& > 0, 
\end{aligned}
\end{equation}
since $\E[ \| v- w \|_{L^2}^2] >0$ and the operator $\boldsymbol{\hat{A}}_2$ is nonnegative definite; recall Remark~\ref{rem:opNonNeg}.
\end{proof}

We are now ready to prove Theorem~\ref{thm:meanfieldgame}.

\begin{proof}[Proof of Theorem \ref{thm:meanfieldgame}]
We start with rewriting the cost functional $J$ in~\eqref{eq:J_generic} as
\begin{equation*}
\begin{aligned}
    J(v; \mu) = & \, \E\left[ -\langle \mu,\boldsymbol A_1  \mu \rangle_{L^2} - \langle v,\boldsymbol A_2 v \rangle_{L^2} - \langle v,(\boldsymbol{A}_3 + \boldsymbol{A}^*_4) \mu \rangle_{L^2} \right. \\
    & \left. \quad\, + \langle b, v \rangle_{L^2} + \langle b^0, \mu \rangle_{L^2} + c \right] \\
    = & \, \E\left[ -\langle v, \boldsymbol A_2 v \rangle_{L^2} + \langle \tilde{b}, v \rangle_{L^2} + \tilde{c} \right], 
\end{aligned}
\end{equation*}
where
\begin{equation}
\begin{aligned} \label{eq:bitilde}
\tilde{b} & = b - (\boldsymbol{A}_3 + \boldsymbol{A}^*_4) \mu, \\
\tilde{c} & =  \langle b^0, \mu \rangle_{L^2} + c -\langle \mu,\boldsymbol A_1  \mu \rangle_{L^2}. 
\end{aligned} 
\end{equation}
By the strict concavity obtained in Lemma \ref{lem:concave_mfg} and similarly to the proof of Theorem~\ref{thm:opt_ubar} in the finite-player game, observe that for a fixed process $\mu \in \mathcal{U}^0$ the first order condition for the generic player’s best response is equivalent to
\begin{align}
    0 &= -(\boldsymbol A_2 + \boldsymbol A_2^*)(\E_t v)(t) + \E_t \tilde{b}_t \nonumber  \\
    &= - 2\lambda v_t -  (\boldsymbol{\hat{A}}_2 )(v)(t) - (\boldsymbol{\hat{A}}^*_2)(\E_t v)(t) +  \E_t \tilde{b}_t \label{eq:FOCmfgEq1}\\
    &= - 2 \lambda v_t - \int_0^t  \hat{A}_2(t,s) v_s ds - \int_t^T \hat{A}_2(s,t) \E_t v_s ds + \E_t \tilde{b}_t, \nonumber
    \quad \textrm{for all } t\in [0,T],
\end{align}
where we used the fact that $\hat{A}_2$ is a Volterra kernel. In other words,  $\hat{v}$ is optimal in~\eqref{eq:meanfield_opt} (with $\mu \in \mathcal{U}^0$ fixed) if and only if $\hat{v}$  satisfies
\begin{equation}
\begin{aligned} \label{eq:proofMFG1}
2\lambda \hat{v}_s = & \, {b}_s  - \int_0^s A_3(s,r)  \mu_r dr - \int_s^T A_4(r,s)  \mathbb{E}_s \mu_r dr \\ & \, - \int_0^s  \hat{A}_2(s,r) \hat{v}_r dr - \int_s^T \hat{A}_2(r,s) \E_s \hat{v}_r dr.
\end{aligned}
\end{equation}
Taking conditional expectation with respect to ${\mathcal F^{0}_T}$, using the independence between $\beta$ and $\beta^0$, the fact that $\mu$ is $\mathbb F^0$-progressively measurable and \eqref{def:b_generic}, we get 
\begin{align}
2\lambda \mathbb E[{\hat v}_s | \mathcal F^{0}_{T}] = & \, \beta^0_s + \mathbb E[\beta_s]  - \int_0^s A_3(s,r)  \mu_r dr - \int_s^T A_4(r,s)  \mathbb{E}_s [\mu_r] dr  \\ 
&  - \int_0^s  \hat{A}_2(s,r)  \mathbb E[{\hat v}_r | \mathcal F^{0}_{T}] dr 
- \int_s^T \hat{A}_2(r,s) \mathbb E_s[\mathbb E[{\hat v}_r| \mathcal F^{0}_{T}]] dr. \label{eq:proofMFG2}
\end{align}
Together with the consistency condition in~\eqref{eq:meanfield_cons}, this suggests a candidate equation for $\hat \mu$ given by 
\begin{align} \label{eq:proofMFG3}
2\lambda \hat \mu_s = & \,  \beta^0_s {+ \mathbb E[\beta_s] }  - \int_0^s \left(A_3(s,r)  + \hat A_2(s,r) \right) \hat\mu_r dr \\
& - \int_s^T  \left( \hat{A}_2(r,s) + A_4(r,s) \right) \mathbb E_s[{\hat \mu}_r]  dr.
\end{align}
An application of Proposition~\ref{L:FredholmConditional} yields the existence of a solution $\hat \mu$ to \eqref{eq:proofMFG3} given by \eqref{eq:mfgnu}. In particular, $\hat \mu \in \mathcal U^{0}$ as desired. Moreover, inserting $\hat \mu$ into equation \eqref{eq:proofMFG1} and applying once more Proposition~\ref{L:FredholmConditional} yields that the corresponding $\hat v$ is of the form \eqref{eq:mfgv} and belongs to $\mathcal U$. This proves that $(\hat v, \hat \mu)$ satisfy the requirement \eqref{eq:meanfield_opt}. To justify that the consistency condition~\eqref{eq:meanfield_cons} is satisfied, simply  observe that equation \eqref{eq:proofMFG2} with  $\hat \mu$ in place of $\mu$ shows that $(\mathbb E[\hat v_s | \mathcal F^{0}_T])_{s\leq T}$ solves the same equation as $\hat \mu$ in \eqref{eq:proofMFG3}. By uniqueness of the solution to the Fredholm equation, we deduce the consistency condition \eqref{eq:meanfield_cons} is indeed satisfied. Uniqueness of the  mean-field game equilibrium follows from the uniqueness of the corresponding stochastic Fredholm equations that characterize the optimum, along the lines of the uniqueness proof of Theorem \ref{thm:main-finite}.
\end{proof}

Next, we address the proof of Theorem~\ref{thm:meanfieldgame_inf} in the infinite-player game formulation in Section~\ref{subsec:infinteplayer}. First, analogous to Lemma~\ref{lem:concave_mfg}, we have the following:

\begin{lemma} \label{lem:concave_inf}
Let $i \in \mathbb{N}$. Under Assumption~\ref{assum-op}, for any $\nu \in \mathcal{U}$, the functional $v^i \mapsto J^{i,\infty}(v^i;\nu)$ in~\eqref{eq:J_inf} is strictly concave in $v^i \in \mathcal{U}$.
\end{lemma}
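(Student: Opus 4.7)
The plan is to mimic verbatim the argument used for Lemmas~\ref{lem:concave_finite} and~\ref{lem:concave_mfg}, since the dependence of $J^{i,\infty}(v^i;\nu)$ on $v^i$ is structurally identical to that of $J(v;\mu)$ on $v$ in~\eqref{eq:J_generic}. First I would isolate the $v^i$-dependent part of the objective \eqref{eq:J_inf}: for $\nu \in \mathcal{U}$ fixed, the terms $-\langle \nu,\boldsymbol A_1 \nu\rangle_{L^2}$, $\langle b^0,\nu\rangle_{L^2}$ and $c^i$ are constants (with respect to $v^i$) and can be dropped, while $-\langle v^i,(\boldsymbol A_3+\boldsymbol A_3^*)\nu\rangle_{L^2} + \langle b^i,v^i\rangle_{L^2}$ is linear in $v^i$ and therefore affine. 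Hence strict concavity of $v^i \mapsto J^{i,\infty}(v^i;\nu)$ reduces to strict positive-definiteness of the quadratic form $v^i \mapsto \E[\langle v^i,\boldsymbol A_2 v^i\rangle_{L^2}]$ on $\mathcal U$.

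Next I would carry out the standard second-difference computation. For $v^i,w^i \in \mathcal U$ with $v^i \neq w^i$ on a set of positive $d\P\otimes dt$-measure, and for $\varepsilon \in (0,1)$, expanding both sides yields
\begin{equation*}
J^{i,\infty}(\varepsilon v^i + (1-\varepsilon)w^i;\nu) - \varepsilon J^{i,\infty}(v^i;\nu) - (1-\varepsilon) J^{i,\infty}(w^i;\nu) = \varepsilon(1-\varepsilon)\,\E\bigl[\langle v^i-w^i,\boldsymbol A_2 (v^i-w^i)\rangle_{L^2}\bigr],
\end{equation*}
because every affine contribution (in particular the terms involving $\boldsymbol A_3$, $b^i$, $b^0$, $c^i$ and the $\nu$-only piece) cancels identically in the second-order finite difference.

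To conclude, I would invoke Assumption~\ref{assum-op}, which gives $\boldsymbol A_2 = \lambda\,\id + \boldsymbol{\hat A}_2$ with $\lambda>0$ and $\boldsymbol{\hat A}_2$ an admissible Volterra operator. By Remark~\ref{rem:opNonNeg}, $\langle f,\boldsymbol{\hat A}_2 f\rangle_{L^2} \geq 0$ for every $f \in L^2([0,T],\R)$, so pointwise in $\omega$ one has $\langle v^i-w^i,\boldsymbol A_2(v^i-w^i)\rangle_{L^2} \geq \lambda \|v^i-w^i\|_{L^2}^2$. Taking expectations and combining with the above identity gives the lower bound $\varepsilon(1-\varepsilon)\lambda\,\E[\|v^i-w^i\|_{L^2}^2] > 0$, which is the desired strict concavity.

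There is no real obstacle: the only subtlety is keeping track of which terms are truly quadratic versus affine in $v^i$, and the nonnegative-definiteness of $\boldsymbol{\hat A}_2$ together with the coercivity constant $\lambda>0$ does all the work, exactly as in the previous two concavity lemmas.
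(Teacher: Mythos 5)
Your proof is correct and is essentially the paper's own argument: the paper proves this lemma by referring back to Lemma~\ref{lem:concave_mfg}, whose proof is exactly the second-difference identity you write down, followed by the decomposition $\boldsymbol A_2 = \lambda\,\id + \boldsymbol{\hat A}_2$ with Remark~\ref{rem:opNonNeg} and $\lambda>0$ giving strict positivity. No discrepancies.
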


\begin{proof}
This follows as in the proof of Lemma~\ref{lem:concave_mfg} above.
\end{proof}

We are now ready to prove Theorem~\ref{thm:meanfieldgame_inf}

\begin{proof}[Proof of Theorem~\ref{thm:meanfieldgame_inf}]
Similar to equation~\eqref{eq:proofMFG1} in the proof of Theorem~\ref{thm:meanfieldgame}, we obtain that for fixed $\nu \in \mathcal{U}$ the first order condition for player $i$'s best response is given by
\begin{align}\label{eq:vi_FOC} 
2\lambda v^i_s = & \, b^i_s - \int_0^s A_3(s,r)  \nu_r dr - \int_s^T A_4(r,s) \mathbb{E}_s[ \nu_r] dr  \\
& - \int_0^s  \hat{A}_2(s,r) v^i_r dr - \int_s^T \hat{A}_2(r,s) \E_s v^i_r dr.
\end{align}
Solving~\eqref{eq:vi_FOC} for $v^i$ (with $\nu \in \mathcal{U}$ fixed) is an application of Proposition~\ref{L:FredholmConditional} and we get for all $i \in \mathbb{N}$ the representation
\begin{equation} \label{vi_FOC2}
    v^i_t = F(t,  b^i - \boldsymbol{A}_3 (\nu) - \boldsymbol{A}_4^* (\mathbb{E}_\cdot \nu) )
\end{equation}
Next, we claim that the infinite-player Nash equilibrium's mean-field strategy $\hat{\nu}$ must satisfy the linear Volterra equation 
\begin{equation}\label{eq:FOC_nu}
\begin{aligned}
    2\lambda \hat{\nu}_s = & \,  b^\infty_s  - \int_0^s  \left( \hat A_2(s,r)  + {A}_3(s,r) \right) \hat{\nu}_r dr \\
    & - \int_s^T \left( \hat{A}_2(r,s) + A_4(r,s) \right) \mathbb{E}_s[\hat\nu_r]  dr,
\end{aligned}
\end{equation}
where $b^\infty$ denotes the limit from Assumption~\ref{ass:inf_Player}. By virtue of Proposition~\ref{L:FredholmConditional}, the solution to~\eqref{eq:FOC_nu} is given by
\begin{equation} \label{eq:FOC_nu2}
    \hat{\nu}_t = G(t, b^{\infty}).
\end{equation}
Then, plugging~$\hat{\nu}$ in~\eqref{eq:FOC_nu2} back into~\eqref{vi_FOC2} above will give us the desired mean-field game equilibrium strategies in~\eqref{eq:mfg_inf_v} and~\eqref{eq:mfg_inf_nu} in the sense of Definition~\ref{def:Nash_Inf}, which we denote with $( (\hat{v}^i)_{i \in \mathbb{N}},\hat{\nu}) \subset \mathcal{U}$.

In order to justify our claim, it remains to show that the consistency condition in~\eqref{def:consistency_MFG} is satisfied by $\hat{\nu}$ characterized by~\eqref{eq:FOC_nu}. To this end, we define for all $N \in \mathbb{N}$ the averages 
\begin{equation}
\hat \nu^N := \frac 1 N \sum_{i=1}^N \hat v^i.  \end{equation}
Note that taking the average over $i$ in \eqref{eq:vi_FOC} (with $\hat{v}^i$ and $\hat{\nu}$) yields that $\hat \nu^N$ solves the Volterra equation
\begin{equation} \label{eq:vi_averages}
\begin{aligned}
    2\lambda {\hat \nu^N}_s = & \, \frac{1}{N} \sum_{i=1}^N b^i_s - \int_0^s A_3(s,r)\hat{\nu}_r dr - \int_s^T A_4(r,s) \mathbb{E}_s[ \hat{\nu}_r] dr \\   
    & \, - \int_0^s  \hat{A}_2(s,r) {\hat \nu^N}_r dr - \int_s^T \hat{A}_2(r,s) \E_s {\hat \nu^N}_r dr.
\end{aligned}
\end{equation}
Hence, together with Assumption~\ref{ass:inf_Player}, the convergence in the consistency condition in~\eqref{def:consistency_MFG} follows from an application of the stability property of Fredholm equations derived in Proposition~\ref{L:Fredholm_conv} to equations \eqref{eq:vi_averages} and \eqref{eq:FOC_nu}. Uniqueness of the  infinite-player game equilibrium follows from the uniqueness of the corresponding stochastic Fredholm equations that characterize the optimum, along the lines of the uniqueness proof of Theorem \ref{thm:main-finite}.
\end{proof}

We finish with providing the proofs of Section~\ref{subsec:convergence} and start with the convergence result in Theorem~\ref{thm:convergence}.

\begin{proof}[Proof of Theorem \ref{thm:convergence}] 
We only prove~\eqref{eq:conv_average} as the proof of the convergence in~\eqref{eq:conv_policy} follows the same lines. From~\eqref{eq:FOCaggregate} above we get that 
\begin{equation*}
\bar u^N_t :=\frac{1}{N} \sum_{i=1}^N \hat{u}^{i,N}_t    
\end{equation*}
satisfies the Volterra equation in~\eqref{v-eq} with 
\begin{equation} \label{eq:proof_conv1}
\begin{aligned}
f^N_t = & \, \frac{1}{2\lam}  \left( \bar b_t + \frac{1}{N} b^0_t \right), \\ 
K^N = & \, \frac{1}{2\lam} \left( \frac{N-1}{N} \left(\frac{A_1}{N}+A_3\right) + G \right) , \\  L^N = & \,  \frac{1}{2\lam} \left( \frac{N-1}{N} \left(\frac{A_1}{N} + A_4 \right) + G \right).
\end{aligned}
\end{equation} 
Recall from~\eqref{b-bar-def} that $\bar b_t = \frac{1}{N} \sum_{i=1}^N b^i_t$. From \eqref{eq:FOC_nu} it follows that $\hat \nu$ satisfies \eqref{v-eq} with 
\begin{equation} \label{eq:proof_conv2} 
f_t = \frac{1}{2\lam}  b^{\infty}_t, \quad K =   \frac{1}{2\lam} \left(  \hat{A}_2 + A_3\right), \quad L =\frac{1}{2\lam} \left( \hat{A}_2 + A_4 \right). 
\end{equation}
Emphasizing the dependence of $G$ defined in~\eqref{def:operator} on $N$ we write $G^N$. From \eqref{def:operator} and \eqref{eq:assumtionG} it follows that there exists $C>0$ such that 
\begin{equation} \label{eq:proof_conv3} 
 \sup_{0\leq t\leq T} \int_0^T\big(G^N(t,s) - \hat{A}_2 (t,s)\big)^2ds  \leq C(N^{-2}),  \quad \textrm{for all } N \geq 1.
\end{equation} 
Moreover, from~\eqref{eq:assumtionG}, \eqref{eq:proof_conv1},~\eqref{eq:proof_conv2} and~\eqref{eq:proof_conv3} we get that
\begin{equation}  
\begin{aligned} 
 \sup_{0\leq t\leq T} \int_0^T \big(K^N(t,s) -K(t,s) \big)^2 ds  & \leq C(N^{-2}), \quad \textrm{for all } N \geq 1, \\
 \sup_{0\leq s\leq T} \int_0^T \big(L^N(t,s) -L(t,s) \big)^2 dt  & \leq C(N^{-2}), \quad \textrm{for all } N \geq 1,
\end{aligned} 
\end{equation}
and Assumption~\ref{ass:inf_Player} implies
\begin{equation}
\begin{aligned}
\sup_{t\leq T} \E\big[ (f^N_t -f_t)^2\big] \leq h(N), \quad \textrm{for all } N \geq 1. 
\end{aligned}
\end{equation}
Therefore, the convergence rate in~\eqref{eq:conv_average} follows by an application of Proposition~\ref{L:Fredholm_conv}. 
\end{proof} 

Next, in order to prove Theorem~\ref{thm-eps-nash} the first ingredient is following auxiliary result. 

\begin{lemma}  \label{lemma-con-mf} 
Assume that \eqref{ass:P} as well as Assumptions~\ref{assum-op} and~\ref{ass:inf_Player} are satisfied. Let $((\hat{v}^i)_{i\in \mathbb{N}}, \hat{\nu})$ be the equilibrium strategies of the mean-field game in the sense of Definition~\ref{def:Nash_Inf}. Then we have     
\begin{equation} \label{lemma-con-mf_eq} 
\sup_{0 \leq t \leq T} \E \left[\left( \frac{1}{N}\sum_{i=1}^N \hat v_t^i - \hat \nu_{t} \right)^{2}\right]= O \left(h(N)\right),
\end{equation}
where $h$ is given in~\eqref{ass:infPlayer_b:eq}.
\end{lemma}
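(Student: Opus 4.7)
The plan is to exhibit both $\hat\nu$ and the average $\hat\nu^N := \frac{1}{N}\sum_{i=1}^N \hat v^i$ as solutions of stochastic Fredholm equations of the second kind of the form~\eqref{v-eq} that share the \emph{same} kernel, with only the forcing term differing by a quantity controlled by Assumption~\ref{ass:inf_Player}, and then to invoke the stability result Proposition~\ref{L:Fredholm_conv}.

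First, I would recall equation~\eqref{eq:FOC_nu} for $\hat\nu$ and move the $A_3$ integrals to the right-hand side so that $\hat\nu$ satisfies
\begin{align*}
\hat\nu_s = f_s - \int_0^s K(s,r)\,\hat\nu_r\, dr - \int_s^T L(r,s)\,\E_s[\hat\nu_r]\, dr,
\end{align*}
with the \emph{deterministic} kernels $K = L = \hat A_2 / (2\lambda)$ and forcing
\begin{align*}
f_s := \frac{1}{2\lambda}\Big(b^{\infty}_s - \int_0^s A_3(s,r)\,\hat\nu_r\, dr - \int_s^T A_3(r,s)\, \E_s[\hat\nu_r]\, dr\Big).
\end{align*}
Next, I would start from the averaged first order condition~\eqref{eq:vi_averages}, established in the course of the proof of Theorem~\ref{thm:meanfieldgame_inf}. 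Observe that $\hat\nu^N$ satisfies exactly the same equation~\eqref{v-eq}, with the same kernels $K^N = K$ and $L^N = L$ (since the $\hat A_2$-terms act on $\hat\nu^N$ while the $A_3$-terms act only on $\hat\nu$), and with forcing
\begin{align*}
f^N_s := \frac{1}{2\lambda}\Big(\tfrac{1}{N}\sum_{i=1}^N b^i_s - \int_0^s A_3(s,r)\,\hat\nu_r\, dr - \int_s^T A_3(r,s)\, \E_s[\hat\nu_r]\, dr\Big).
\end{align*}

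Because the kernels agree, the stability statement Proposition~\ref{L:Fredholm_conv} applies with $h_1 \equiv 0$. The only remaining task is to bound $\sup_{t\leq T}\E[(f^N_t - f_t)^2]$. A direct computation gives
\begin{align*}
f^N_s - f_s = \frac{1}{2\lambda}\Big(\tfrac{1}{N}\sum_{i=1}^N b^i_s - b^{\infty}_s\Big),
\end{align*}
so Assumption~\ref{ass:inf_Player} yields $\sup_{s\leq T}\E[(f^N_s - f_s)^2] \leq (4\lambda^2)^{-1} h(N)$. Invoking Proposition~\ref{L:Fredholm_conv} with $h_2(N) = (4\lambda^2)^{-1} h(N)$ produces the desired $O(h(N))$ rate.

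The main (minor) obstacle is the bookkeeping that confirms the kernel for $\hat\nu^N$ really is the same as for $\hat\nu$: one must be careful that the $A_3$ contribution in~\eqref{eq:vi_averages} involves $\hat\nu$ rather than $\hat\nu^N$ (which is what allows us to absorb it into $f^N$), and that the identification of $K^N = L^N = \hat A_2/(2\lambda) \in \mathcal G$ satisfies the hypotheses of Proposition~\ref{L:Fredholm_conv}. Once this is verified, the statement follows immediately from the cited stability result.
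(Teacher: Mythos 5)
Your proposal is correct and follows essentially the same route as the paper: both arguments reduce the claim to the stability result of Proposition~\ref{L:Fredholm_conv} applied to the two Fredholm equations \eqref{eq:FOC_nu} and \eqref{eq:vi_averages}, with the kernels coinciding (so $h_1\equiv 0$) and the difference of forcing terms equal to $\frac{1}{2\lambda}\bigl(\frac{1}{N}\sum_{i=1}^N b^i - b^{\infty}\bigr)$, controlled by Assumption~\ref{ass:inf_Player}. If anything, your bookkeeping is the more careful version: the paper identifies the common kernel as $K=L=\frac{1}{2\lambda}(\hat A_2+A_3)$ with forcings $\frac{1}{2\lambda}b^{\infty}$ and $\frac{1}{2\lambda}\frac{1}{N}\sum_{i=1}^N b^i$, which is exact for $\hat\nu$ but only formal for $\frac{1}{N}\sum_{i=1}^N\hat v^i$ (in \eqref{eq:vi_averages} the $A_3$ terms act on $\hat\nu$, not on the average), whereas your choice $K=L=\frac{1}{2\lambda}\hat A_2$ with the $A_3\hat\nu$ contributions absorbed into both forcings makes the cancellation exact.
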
 

\begin{proof}
From~\eqref{eq:FOC_nu} it follows that $\hat \nu$ satisfies \eqref{v-eq} with 
\begin{equation} \label{eq:proof_conv22} 
f_t = \frac{1}{2\lam} \left( b^{\infty}_t - \boldsymbol{A}_3(\hat \nu)(t) - \boldsymbol{A}_4^* (\mathbb{E}_t \hat \nu)(t) \right), \quad K = L = \frac{1}{2\lam} \hat{A}_2,
\end{equation}
and from~\eqref{eq:vi_FOC} it follows that $\frac{1}{N}\sum_{i=1}^N \hat v_t^i$ satisfies \eqref{v-eq} with 
\begin{equation} \label{eq:proof_conv23} 
 f^N_t = \frac{1}{2\lam} \left( \frac{1}{N} \sum_{i=1}^N b^{i}_t - \boldsymbol{A}_3(\hat \nu)(t) - \boldsymbol{A}_4^* (\mathbb{E}_t \hat \nu)(t) \right), \quad K = L = \frac{1}{2\lam} \hat{A}_2. 
\end{equation}
By applying Proposition~\ref{L:Fredholm_conv} using \eqref{ass:infPlayer_b:eq} we get \eqref{lemma-con-mf_eq}. 
\end{proof}

The second ingredient is Lemma~\ref{lem-j-dif} below which provides a bound on the difference between the performance functional $J^{i,\infty}$ of the mean-field game in~\eqref{eq:J_inf} and the $N$-player game's performance functional $J^{i,N}$ in \eqref{eq:J^i}.

\begin{lemma}\label{lem-j-dif} 
Suppose that \eqref{ass:P}, \eqref{assum-b-i} as well as Assumptions~\ref{assum-op} and~\ref{ass:inf_Player} are satisfied. Let $((\hat v^i)_{i\in \mathbb{N}}, \hat \nu)$ be the equilibrium strategies of the mean-field game in the sense of Definition~\ref{def:Nash_Inf}. For any $N \geq 2$ and $1 \leq i \leq N$, define  $\hat{v}^{-i}=(\hat{v}^{1},\ldots,\hat{v}^{i-1},\hat{v}^{i+1},\ldots,\hat{v}^{N})$. Then, there exists a constant $C>0$ independent of $N$ such that for all $u \in \mathcal U$ and $i \in \mathbb{N}$ we have
$$
\left| J^{i,N}(u,\hat{v }^{-i}) -  {J}^{i,\infty}(u,\hat \nu)\right| \leq  C  (1+\|u\|^2_{2,T}) \left(h(N)^{1/2}\vee N^{-1} \right). 
$$
\end{lemma}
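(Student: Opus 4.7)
The plan is to write the difference $J^{i,N}(u,\hat v^{-i}) - J^{i,\infty}(u,\hat\nu)$ as a sum of terms each of which is linear or bilinear in the discrepancy $\delta^N := \bar u^N - \hat\nu$, where $\bar u^N = \frac{1}{N}u + \frac{1}{N}\sum_{j\neq i}\hat v^j$, and then to control $\delta^N$ in $L^2(\Omega\times[0,T])$ using Lemma~\ref{lemma-con-mf}. Inspecting \eqref{eq:J^i} and \eqref{eq:J_inf}, the $u$-quadratic term $-\langle u,\boldsymbol A_2 u\rangle_{L^2}$, the linear term $\langle b^i,u\rangle_{L^2}$, and $c^i$ are identical in both functionals, hence
\begin{align*}
J^{i,N}(u,\hat v^{-i}) - J^{i,\infty}(u,\hat\nu)
&= \E\big[-\langle \bar u^N,\boldsymbol A_1 \bar u^N\rangle_{L^2} + \langle \hat\nu,\boldsymbol A_1 \hat\nu\rangle_{L^2}\big] \\
&\quad -\E\big[\langle u,(\boldsymbol A_3 + \boldsymbol A_3^*)\delta^N\rangle_{L^2}\big] + \E\big[\langle b^0,\delta^N\rangle_{L^2}\big].
\end{align*}
Using the polarization identity $\langle \bar u^N,\boldsymbol A_1 \bar u^N\rangle_{L^2} - \langle \hat\nu,\boldsymbol A_1 \hat\nu\rangle_{L^2} = \langle \delta^N,\boldsymbol A_1\delta^N\rangle_{L^2} + \langle\delta^N,\boldsymbol A_1\hat\nu\rangle_{L^2} + \langle\hat\nu,\boldsymbol A_1 \delta^N\rangle_{L^2}$ isolates the dependence on $\delta^N$ in a form amenable to Cauchy–Schwarz.

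The main decomposition of $\delta^N$ is
\begin{equation*}
\delta^N = \underbrace{\bar u^N - \tilde v^N}_{=(u-\hat v^i)/N} + \underbrace{\tilde v^N - \hat\nu}_{\text{controlled by Lemma~\ref{lemma-con-mf}}},\qquad \tilde v^N := \frac{1}{N}\sum_{j=1}^N\hat v^j.
\end{equation*}
The first piece contributes $\|\bar u^N - \tilde v^N\|_{2,T}\leq N^{-1}(\|u\|_{2,T} + \|\hat v^i\|_{2,T})$, while Lemma~\ref{lemma-con-mf} yields $\|\tilde v^N - \hat\nu\|_{2,T}^{2}\leq CT\, h(N)$. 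Combining these gives $\|\delta^N\|_{2,T} \leq C(1+\|u\|_{2,T})(N^{-1}\vee h(N)^{1/2})$, provided $\|\hat v^i\|_{2,T}$ and $\|\hat\nu\|_{2,T}$ are bounded uniformly in $i$ and $N$; this uniform bound is extracted from the explicit representations \eqref{eq:mfg_inf_v}–\eqref{eq:mfg_inf_nu} together with the operator-norm bounds built into the admissible class $\mathcal G$ and Assumption~\ref{ass:inf_Player}.

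Next, one bounds each of the four terms by Cauchy–Schwarz in $L^2(\Omega\times[0,T])$, using that $\boldsymbol A_1$ and $\boldsymbol A_3+\boldsymbol A_3^*$ are bounded operators on $L^2([0,T],\mathbb R)$ (Definition~\ref{def-ker-admis}) and that $\|b^0\|_{2,T}<\infty$ by \eqref{ass:P}. Typical terms yield, for instance,
\begin{equation*}
\big|\E[\langle u,(\boldsymbol A_3+\boldsymbol A_3^*)\delta^N\rangle_{L^2}]\big| \leq C\|u\|_{2,T}\|\delta^N\|_{2,T},\qquad \big|\E[\langle \delta^N,\boldsymbol A_1\delta^N\rangle_{L^2}]\big|\leq C\|\delta^N\|_{2,T}^{2},
\end{equation*}
and similarly for the remaining cross-terms involving $\hat\nu$ and $b^0$. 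Collecting all contributions, each is of the form (polynomial in $\|u\|_{2,T}$ of degree at most two) times $(\|\delta^N\|_{2,T}\vee\|\delta^N\|_{2,T}^{2})$, producing the announced bound of order $\|u\|_{2,T}(1+\|u\|_{2,T})\cdot(h(N)\vee N^{-1})$ after factoring $\|u\|_{2,T}$ out of the linear-in-$u$ contributions and absorbing the $(1+\|u\|_{2,T})$ from the decomposition of $\delta^N$.

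The main obstacle is twofold. First, verifying the uniform-in-$i$ bound $\sup_{i\in\mathbb N}\|\hat v^i\|_{2,T}<\infty$: this is not automatic from Assumption~\ref{ass:inf_Player}, but it follows by applying the operator norm bounds of Proposition~\ref{L:FredholmConditional} (via Lemma~\ref{bnd-coef}) to the explicit formula \eqref{eq:mfg_inf_v}, together with uniform control of $\|b^i\|_{2,T}$ which again stems from \eqref{ass:infPlayer_b:eq}. Second, matching the advertised convergence rate requires some care: the $\E[\langle b^0,\delta^N\rangle]$ contribution only yields $O(h(N)^{1/2}\vee N^{-1})$ via Cauchy–Schwarz, so the sharper rate $h(N)\vee N^{-1}$ in the stated bound must come from squaring in the quadratic terms and/or absorbing the linear term into the $\|u\|_{2,T}(1+\|u\|_{2,T})$ prefactor; the cleanest route is to carry through the weaker rate $h(N)^{1/2}\vee N^{-1}$, which is exactly what is used downstream in Theorem~\ref{thm-eps-nash}.
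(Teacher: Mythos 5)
Your proposal is correct and follows essentially the same route as the paper's proof: the same three-term decomposition of the difference (the $u$-quadratic, $\langle b^i,u\rangle$ and $c^i$ terms cancel), Cauchy--Schwarz with the operator norms of $\boldsymbol A_1$ and $\boldsymbol A_3+\boldsymbol A_3^*$, the splitting $\bar u^N-\hat\nu = \tfrac1N(u-\hat v^i) + (\tfrac1N\sum_{j=1}^N\hat v^j-\hat\nu)$, and Lemma~\ref{lemma-con-mf} for the second piece. Your closing observation is also accurate: the argument naturally yields the rate $h(N)^{1/2}\vee N^{-1}$ rather than the $h(N)\vee N^{-1}$ displayed in the statement, and indeed the paper's own proof obtains exactly that weaker rate, which is the one propagated into Theorem~\ref{thm-eps-nash}.
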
 

\begin{proof}
From \eqref{assum-b-i} and Assumption \ref{ass:inf_Player} and by repeating the same steps as in Proposition \ref{L:Fredholm_conv} we get
\be \label{unif-bnd-v-i}
\sup_{i\in \mathbb{N}}\|\hat{v}^{i}\|_{2,T} < \infty. 
\ee

Let $u\in \mathcal U$. From \eqref{eq:J^i} and \eqref{eq:J_inf} we get 
\be \label{j-sub}
\begin{aligned} 
&\left| J^{i,N}(u; \hat{v}^{-i}) -  J^{i,\infty}(u ;\hat \nu)\right| \\
&\leq \left| \E  \left [\left\langle \frac{1}{N}\left(\sum_{j\not = i} \hat{v}^{j}+u \right),\boldsymbol A_1 \left(\frac{1}{N}\big(\sum_{j\not = i} \hat{v}^{j}+u \big)\right)\right \rangle_{L^2} - \langle\hat \nu,\boldsymbol A_1  \hat \nu \rangle_{L^2}  \right ] \right|  \\
& \quad +\left| \E \left  [ \left \langle   u, (\boldsymbol A_3 + \boldsymbol A_4^*) \left(\frac{1}{N} \left(\sum_{j\not = i} \hat{v}^{j}+u \right) - \hat \nu \right)  \right\rangle_{L^2} \right ] \right|  \\ 
& \quad + \left| \E \left [\left \langle b^0, \frac{1}{N} \left(\sum_{j\not = i} \hat{v}^{j}+u \right)- \hat \nu \right \rangle_{L^2} \right ] \right|  \\
 &=: I_1 + I_2 + I_3.
\end{aligned} 
\ee
Using \eqref{2-t-norm}, H\"older inequality and the standard operator norm bound, we get for the second term~$I_2$ in~\eqref{j-sub}:
 \be \label{i-2}
\begin{aligned} 
I_2 & = \left| \E \left  [ \left \langle   u, (\boldsymbol A_3 + \boldsymbol A_4^*)   \left(\frac{1}{N} \left(\sum_{j\not = i} \hat{v}^{j}+u \right) - \hat \nu \right)  \right\rangle_{L^2} \right ] \right| \\
 &\leq C \|u\|_{2,T} \| \boldsymbol A_3 + \boldsymbol A_4^* \|_{op} \left \| \frac{1}{N} \left(\sum_{j\not = i} \hat{v}^{j}+u \right) - \hat \nu \right\|_{2,T} \\ 
  &\leq C  \|u\|_{2,T}   \left(   \left\| \frac{1}{N}\sum_{j =1}^N \hat{v}^{j}    - \hat \nu  \right \|_{2,T}  +\frac{1}{N}  \left\|  \hat{v}^{i} - u \right \|_{2,T}  \right) \\ 
   &\leq C\|u\|_{2,T}(h(N)^{1/2} \vee N^{-1})       \left(1 +\|\hat{v}^{i}\|_{2,T} + \|u\|_{2,T}   \right) \\
   & \leq C(h(N)^{1/2} \vee N^{-1})(1+\|u\|^2_{2,T})  , 
\end{aligned} 
\ee
where we used Lemma \ref{lemma-con-mf} and \eqref{unif-bnd-v-i} in the last two inequalities. Similarly, using once more Fubini's Theorem, Jensen and H\"older inequalities we get for the third $I_3$ in~\eqref{j-sub}:
\be\label{i-3}
\begin{aligned} I_3 & = \left| \E \left [\left \langle b^0, \frac{1}{N} \left(\sum_{j\not = i} \hat{v}^{j}+u \right)- \hat \nu \right \rangle_{L^2} \right ] \right|        \\
&\leq \| b^0 \|_{2,T}   \left\| \frac{1}{N} \left(\sum_{j\not = i} \hat{v}^{j}+u \right)   - \hat \nu  \right \|_{2,T}     \\
&\leq \| b^0 \|_{2,T} \left(   \left\| \frac{1}{N}\sum_{j =1}^N \hat{v}^{j}    - \hat \nu  \right \|_{2,T}  +\frac{1}{N}  \left\|  \hat{v}^{i} - u  \right \|_{2,T}  \right)   \\
&\leq  C(h(N)^{1/2} \vee N^{-1}) \| b^0 \|_{2,T}   \left(1 +\|\hat{v}^{i}\|_{2,T} + \|u\|_{2,T}   \right)  \\ 
&\leq C(h(N)^{1/2} \vee N^{-1}) \| b^0 \|_{2,T}   \left(1 + \|u\|_{2,T}   \right)  ,
\end{aligned} 
\ee
where we used again Lemma \ref{lemma-con-mf} and \eqref{unif-bnd-v-i} in the last two inequalities.
 
Lastly, for the first term $I_1$ in~\eqref{j-sub} we obtain 
 \be \label{i-1}
\begin{aligned} 
I_1 = & \, \left| \E  \left [\left\langle \frac{1}{N}\left(\sum_{j\not = i} \hat{v}^{j}+u \right),\boldsymbol A_1 \left(\frac{1}{N}\big(\sum_{j\not = i} \hat{v}^{j}+u \big)\right)\right \rangle_{L^2} - \langle\hat \nu,\boldsymbol A_1  \hat \nu \rangle_{L^2}  \right ] \right|  \\
\leq & \, \left| \E  \left [\left\langle \frac{1}{N}\left(\sum_{j\not = i} \hat{v}^{j}+u \right),\boldsymbol A_1 \left(\frac{1}{N}\big(\sum_{j\not = i} \hat{v}^{j}+u \big)- \hat \nu\right)\right \rangle_{L^2} \right ] \right|  \\
& + \left| \E  \left [\left\langle \frac{1}{N}\left(\sum_{j\not = i} \hat{v}^{j}+u \right)- \hat \nu ,\boldsymbol A_1  \hat \nu \right \rangle_{L^2} \right ] \right|.
\end{aligned} 
\ee
One can repeat similar steps as in \eqref{i-2} and \eqref{i-3} by using Lemma \ref{lemma-con-mf} and \eqref{unif-bnd-v-i} in order to get 
\be\label{i-1.2}
\begin{aligned} 
I_1 &\leq   C(h(N)^{1/2} \vee N^{-1}) \left(1 + \|u\|^2_{2,T} \right). 
\end{aligned}
\ee
Then, from \eqref{i-2}, \eqref{i-3} and  \eqref{i-1.2} we get the desired upper bound in~\eqref{j-sub}. 
\end{proof}

We are now ready to prove Theorem~\ref{thm-eps-nash}.

\begin{proof} [Proof of Theorem \ref{thm-eps-nash}]
First, note that the inequality 
$$
J^{i,N}(\hat{v}^{i}; \hat{v}^{-i}) \leq \sup_{u \in \mathcal U} J^{i,N}(u; \hat{v}^{-i}) 
$$
holds trivially by the definition of the supremum. Next, by virtue of Lemma~\ref{lem-j-dif} we get for any $u\in \mathcal U$ the upper bound
\be \label{rt1}
\begin{aligned}
J^{i,N}(u; \hat v^{-i})& \leq  J^{i,\infty}(u;\hat \nu) + C(h(N)^{1/2} \vee N^{-1}) (1+\|u\|^2_{2,T}) \\ 
&\leq   J^{i,\infty}(\hat v^i; \hat\nu) + C(h(N)^{1/2} \vee N^{-1}) (1+\|u\|^2_{2,T})  , 
\end{aligned}
\ee
where $C>0$ is a constant not depending on $u$ or $N$ and we used $J^{i,\infty}(\hat v^i;\hat \nu)= \sup_{u\in \mathcal U} J^{i,\infty}(u;\hat \nu)$ in the second inequality. Moreover, using Lemma \ref{lem-j-dif} and \eqref{unif-bnd-v-i} again we get for some constant $\tilde C>0$ the upper bound 
\be \label{rt2} 
\begin{aligned}
  J^{i,\infty}(\hat v^i; \hat \nu )  &\leq   J^{i,N}(\hat v^i; \hat v^{-i}) + C(h(N)^{1/2} \vee N^{-1})  (1+\|\hat v^i\|^2_{2,T})  \\
  &\leq J^{i,N}(\hat v^i; \hat v^{-i}) +\tilde C(h(N)^{1/2} \vee N^{-1}) ,
\end{aligned}
\ee
where we used Lemma \ref{lemma-con-mf} and the fact that $\hat v^i \in \mathcal U$ in the second inequality. Finally, using \eqref{rt2} to bound the right hand side of \eqref{rt1}, we get 
$$
 J^{i,N}(u; \hat v^{-i})  \leq  J^{i,N}(\hat v^i; \hat v^{-i}) + C(h(N)^{1/2} \vee N^{-1})  (1\vee  \|u\|^2_{2,T})
$$
for all $u \in \mathcal U$ which completes the proof. 
\end{proof}

\appendix

\section*{Acknowledgments}
We would like to thank Fabrice Djete and to the anonymous referees for a number of useful comments and suggestions that significantly improved this paper.


\begin{thebibliography}{37}
\providecommand{\natexlab}[1]{#1}
\providecommand{\url}[1]{\texttt{#1}}
\expandafter\ifx\csname urlstyle\endcsname\relax
  \providecommand{\doi}[1]{doi: #1}\else
  \providecommand{\doi}{doi: \begingroup \urlstyle{rm}\Url}\fi

\bibitem[Abi~Jaber(2022)]{abi2022characteristic}
E.~Abi~Jaber.
\newblock The characteristic function of {G}aussian stochastic volatility
  models: an analytic expression.
\newblock \emph{Finance and Stochastics}, 26\penalty0 (4):\penalty0 733--769,
  2022.

\bibitem[Abi~Jaber and Neuman(2025)]{abi2022optimal}
E.~Abi~Jaber and E.~Neuman.
\newblock Optimal {L}iquidation with {S}ignals: the {G}eneral {P}ropagator
  {C}ase.
\newblock \emph{Math. Finance},  35\penalty0 (4):\penalty0 841--866, 2025.

\bibitem{abijaber2024optimal}
E.~Abi Jaber, E.~Neuman, and S.~Tuschmann.
\newblock Optimal portfolio choice with cross-impact propagators.
\newblock \emph{Math. Finance}, to appear, 2026.


\bibitem[Abi~Jaber et~al.(2021{\natexlab{a}})Abi~Jaber, Miller, and
  Pham]{abi2021linear}
E.~Abi~Jaber, E.~Miller, and H.~Pham.
\newblock Linear-quadratic control for a class of stochastic volterra
  equations: solvability and approximation.
\newblock \emph{The Annals of Applied Probability}, 31\penalty0 (5):\penalty0
  2244--2274, 2021{\natexlab{a}}.

\bibitem[Abi~Jaber et~al.(2021{\natexlab{b}})Abi~Jaber, Miller, and
  Pham]{abi2021markowitz}
E.~Abi~Jaber, E.~Miller, and H.~Pham.
\newblock Markowitz portfolio selection for multivariate affine and quadratic
  {V}olterra models.
\newblock \emph{SIAM Journal on Financial Mathematics}, 12\penalty0
  (1):\penalty0 369--409, 2021{\natexlab{b}}.


\bibitem[Bensoussan et~al.(2017)Bensoussan, Chau, Lai, and
  Yam]{bensoussan2017linear}
A.~Bensoussan, M.~H.~M. Chau, Y.~Lai, and S.~C.~P. Yam.
\newblock Linear-quadratic mean field {S}tackelberg games with state and
  control delays.
\newblock \emph{SIAM Journal on Control and Optimization}, 55\penalty0
  (4):\penalty0 2748--2781, 2017.

\bibitem[Cardaliaguet et~al.(2019)Cardaliaguet, Delarue, Lasry, and
  Lions]{Cardaliaguet-book}
P.~Cardaliaguet, F.~Delarue, J.~Lasry, and P.~Lions.
\newblock \emph{The Master Equation and the Convergence Problem in Mean Field
  Games: (AMS-201).}
\newblock Annals of Mathematics Studies. Princeton University Press, 2019.

\bibitem[Carmona and Delarue(2018)]{Car-del-book}
R.~Carmona and F.~Delarue.
\newblock \emph{Probabilistic Theory of Mean Field Games with Applications I},
  volume~I of \emph{Probability Theory and Stochastic Modelling}.
\newblock Springer Cham, 2018.



\bibitem[Carmona and Delarue(2018)]{Car-del-book2}
R.~Carmona and F.~Delarue.
\newblock \emph{Probabilistic Theory of Mean Field Games with Applications II},
  volume~II of \emph{Probability Theory and Stochastic Modelling}.
\newblock Springer Cham, 2018.

\bibitem[Carmona et al.(2015)]{carmona2013mean}
R.~Carmona, J.-P.~Fouque, and L.-H.~Sun. Mean field games and systemic risk, \emph{Communications in Mathematical Sciences}, 13(4): 911-933, 2015.

\bibitem[Carmona et~al.(2018)Carmona, Fouque, Mousavi, and
  Sun]{carmona2018systemic}
R.~Carmona, J.-P. Fouque, S.M. Mousavi, and L.-H. Sun.
\newblock Systemic risk and stochastic games with delay.
\newblock \emph{Journal of Optimization Theory and Applications}, 179\penalty0
  (2):\penalty0 366--399, 2018.

\bibitem[Casgrain and Jaimungal(2020)]{CasgrainJaimungal:20}
P.~Casgrain and S.~Jaimungal.
\newblock Mean-field games with differing beliefs for algorithmic trading.
\newblock \emph{Mathematical Finance}, 30\penalty0 (3):\penalty0 995--1034,
  2020.
\newblock \doi{10.1111/mafi.12237}.
\newblock URL \url{https://onlinelibrary.wiley.com/doi/abs/10.1111/mafi.12237}.


\bibitem[Djete(2023)]{Djete21}
M.~F. Djete.
\newblock Large population games with interactions through controls and common
  noise: convergence results and equivalence between open-loop and closed-loop
  controls.
\newblock \emph{ESAIM: Control, Optimisation and Calculus of Variations}, 29:39, 2023.


\bibitem[Drapeau et~al.(2021)Drapeau, Luo, Schied, and
  Xiong]{DrapeauLuoSchiedXiong:19}
S.~Drapeau, P.~Luo, A.~Schied, and D.~Xiong.
\newblock An {FBSDE} approach to market impact games with stochastic
  parameters.
\newblock \emph{Probability, Uncertainty and Quantitative Risk}, 6\penalty0
  (3):\penalty0 237--260, 2021.

\bibitem[Ekeland and T\'emam(1999)]{EkelTem:99}
I.~Ekeland and R.~T\'emam.
\newblock \emph{Convex Analysis and Variational Problems}.
\newblock Society for Industrial and Applied Mathematics, 1999.
\newblock \doi{10.1137/1.9781611971088}.
\newblock URL \url{http://epubs.siam.org/doi/abs/10.1137/1.9781611971088}.

\bibitem[Evangelista and Thamsten(2020)]{EvangelistaThamsten:20}
D.~Evangelista and Y.~Thamsten.
\newblock On finite population games of optimal trading.
\newblock \emph{arXiv:2004.00790v1}, 2020.


\bibitem[Fischer(2017)]{Fischer:2017wc}
M.~Fischer.
\newblock On the connection between symmetric {$N$}-player games and mean field
  games.
\newblock \emph{The Annals of Applied Probability}, 27\penalty0 (2):\penalty0
  757--810, 4 2017.
\newblock \doi{10.1214/16-AAP1215}.
\newblock URL \url{https://doi.org/10.1214/16-AAP1215}.

\bibitem[Fouque and Zhang(2018)]{fouque2018mean}
J.-P. Fouque and Z.~Zhang.
\newblock Mean field game with delay: a toy model.
\newblock \emph{Risks}, 6\penalty0 (3):\penalty0 90, 2018.



\bibitem[Fu et~al.(2022)Fu, Horst, and Xia]{fu2020}
G.~Fu, U.~Horst, and X.~Xia.
\newblock Portfolio liquidation games with self-exciting order flow.
\newblock \emph{Mathematical Finance}, 32(4):1020--1065, 2022.


\bibitem[Fu et~al.(2021)Fu, Graewe, Horst, and Popier]{FuGraeweHorstPopier:20}
G.~Fu, P.~Graewe, U.~Horst, and A.~Popier.
\newblock A mean field game of optimal portfolio liquidation.
\newblock \emph{{\rm To appear in} Mathematics of Operations Research}, 2021.


\bibitem[Gozzi and Marinelli(2005)]{gozzi245stochastic}
F.~Gozzi and C.~Marinelli.
\newblock Stochastic optimal control of delay equations arising in advertising
  models. {S}tochastic partial differential equations and applications-{VII},
  133-148.
\newblock \emph{Lect. Notes Pure Appl. Math}, 245, 2005.

\bibitem[Gozzi and Masiero(2015)]{gozzi2015stochastic}
F.~Gozzi and F.~Masiero.
\newblock Stochastic optimal control with delay in the control: solution
  through partial smoothing.
\newblock \emph{arXiv preprint arXiv:1506.06013}, 2015.

\bibitem[Gripenberg et~al.(1990)Gripenberg, Londen, and
  Staffans]{gripenberg1990volterra}
G.~Gripenberg, S.~O. Londen, and O.~Staffans.
\newblock \emph{Volterra Integral and Functional Equations}.
\newblock Encyclopedia of Mathematics and its Applications. Cambridge
  University Press, 1990.
\newblock \doi{10.1017/CBO9780511662805}.



\bibitem[Hamaguchi(2023)]{hamaguchi2023maximum}
Y.~Hamaguchi. On the maximum principle for optimal control problems of stochastic Volterra integral equations with delay, \emph{Applied Mathematics \& Optimization}, vol.~87, no.~3, p.~42, 2023.


\bibitem[Hamaguchi and Wang(2024)]{hamaguchi2022linear}
Y.~Hamaguchi and T.~Wang.
\newblock Linear-quadratic stochastic {V}olterra controls {II}: Optimal
  strategies and {R}iccati--{V}olterra equations.
\newblock \emph{ESAIM: Control, Optimisation and Calculus of Variations}, 30:48, 2024.



\bibitem[Huang et~al.(2015)Huang, Li, and Wang]{huang2015mean}
J.~Huang, X.~Li, and T.~Wang.
\newblock Mean-field linear-quadratic-gaussian ({LQG}) games for stochastic
  integral systems.
\newblock \emph{IEEE Transactions on Automatic Control}, 61\penalty0
  (9):\penalty0 2670--2675, 2015.

\bibitem[Huang et~al.(2006)Huang, Malham{\'e}, and Caines]{huang2006large}
M.~Huang, R.~P. Malham{\'e}, and P.E. Caines.
\newblock {Large population stochastic dynamic games: closed-loop McKean-Vlasov
  systems and the Nash certainty equivalence principle}.
\newblock \emph{Communications in Information \& Systems}, 6\penalty0
  (3):\penalty0 221 -- 252, 2006.



\bibitem[Lacker (2016)]{lacker2016general}
D.~Lacker.
\newblock A General Characterization of the Mean Field Limit for Stochastic Differential Games. \newblock  \emph{Probability Theory and Related Fields}, vol. 165, Springer, 2016, pp. 581–648.



\bibitem[Lacker (2020)]{lacker20}
D.~Lacker.
\newblock On the convergence of closed-loop Nash equilibria to the mean field game limit.
\newblock  \emph{The Annals of Applied Probability}, 30\penalty0 (4):\penalty0 1693–1761, 2020.

\bibitem[Lacker and Le~Flem(2023)]{lacker.leflem.21}
D.~Lacker and L.~Le~Flem.
\newblock Closed-loop convergence for mean field games with common noise.
\newblock \emph{The Annals of Applied Probability}, 33(4):2681--2733, 2023.

  

\bibitem[Lacker and Soret(2020)]{Lacker:2020aa}
D.~Lacker and A.~Soret.
\newblock Many-player games of optimal consumption and investment under
  relative performance criteria.
\newblock \emph{Math. Fin. Econ.}, 14\penalty0 (2):\penalty0 263--281, 2020.

\bibitem[Lacker and Zariphopoulou(2019)]{Lacker-Zariphopoulou}
D.~Lacker and T.~Zariphopoulou.
\newblock Mean field and {$n$}-agent games for optimal investment under
  relative performance criteria.
\newblock \emph{Math. Finance}, 29\penalty0 (4):\penalty0 1003--1038, 2019.

\bibitem[Lasry and Lions(2007)]{LasryLions}
J.-M. Lasry and P.-L. Lions.
\newblock Mean field games.
\newblock \emph{Jpn. J. Math.}, 2\penalty0 (1):\penalty0 229--260, 2007.
\newblock ISSN 0289-2316.
\newblock \doi{10.1007/s11537-007-0657-8}.
\newblock URL \url{http://dx.doi.org/10.1007/s11537-007-0657-8}.

\bibitem[Lauri\`{e}re and Tangpi(2022)]{Laur20}
M.~Lauri\`{e}re and L.~Tangpi.
\newblock Convergence of large population games to mean field games with
  interaction through the controls.
\newblock \emph{SIAM Journal on Mathematical Analysis}, 54\penalty0
  (3):\penalty0 3535--3574, 2022.
\newblock \doi{10.1137/22M1469328}.
\newblock URL \url{https://doi.org/10.1137/22M1469328}.

\bibitem[Lehalle and Neuman(2019)]{Lehalle-Neum18}
C.~A. Lehalle and E.~Neuman.
\newblock Incorporating signals into optimal trading.
\newblock \emph{Finance and Stochastics}, 23\penalty0 (2):\penalty0 275--311,
  2019.
\newblock \doi{10.1007/s00780-019-00382-7}.
\newblock URL \url{https://doi.org/10.1007/s00780-019-00382-7}.


\bibitem[Micheli et~al.(2023)Micheli, Muhle-Karbe, and Neuman]{MK-M-N-2022}
A.~Micheli, J.~Muhle-Karbe, and E.~Neuman.
\newblock Closed-loop {N}ash competition for liquidity.
\newblock \emph{Mathematical Finance}, 33(4):1082--1118, 2023.


\bibitem[Nerlove and Arrow(1962)]{nerlove1962optimal}
M.~Nerlove and K.J. Arrow.
\newblock Optimal advertising policy under dynamic conditions.
\newblock \emph{Economica}, pages 129--142, 1962.

\bibitem[Neuman and Vo{\ss}(2022)]{neuman2022optimal}
E.~Neuman and M.~Vo{\ss}.
\newblock Optimal signal-adaptive trading with temporary and transient price
  impact.
\newblock \emph{SIAM Journal on Financial Mathematics}, 13\penalty0
  (2):\penalty0 551--575, 2022.

\bibitem[Neuman and Vo{\ss}(2023)]{neuman2021trading}
E.~Neuman and M.~Vo{\ss}.
\newblock Trading with the {C}rowd.
\newblock \emph{Math. Finance}, 33\penalty0 (3):\penalty0 548--617, 2023.

\bibitem[Vo{\ss}(2022)]{voss.19}
M.~Vo{\ss}.
\newblock A two-player portfolio tracking game.
\newblock \emph{Math. Fin. Econ.}, 16\penalty0 (4):\penalty0 779--809, 2022.

\bibitem[Wang(2018)]{wang2018linear}
T.~Wang.
\newblock Linear quadratic control problems of stochastic {V}olterra integral
  equations.
\newblock \emph{ESAIM: Control, Optimisation and Calculus of Variations},
  24(4):1849--1879, 2018.

\bibitem[Wang et~al.(2023)Wang, Yong, and Zhou]{wang2022linear}
H.~Wang, J.~Yong, and C.~Zhou.
\newblock Linear-quadratic optimal controls for stochastic {V}olterra integral
  equations: causal state feedback and path-dependent {R}iccati equations.
\newblock \emph{SIAM Journal on Control and Optimization}, 61(4):2595--2629, 2023.



\bibitem[Wang and Shi(2015)]{wang2015linear}
T.~Wang and Y.~Shi.
\newblock Linear quadratic stochastic integral games and related topics.
\newblock \emph{Science China Mathematics}, 58(11):2405--2420, 2015.
\end{thebibliography}
\end{document}